\tikzset{perp_red/.pic={\draw[red,-latex,ultra thick,-](0,-.15)--(0,.15); 
    \node[text=black, font=\scriptsize] at(0,1){$#1$};},
    perp/.default={}}
\tikzset{perp_green/.pic={\draw[teal,-latex,ultra thick,-](0,-.15)--(0,.15); 
    \node[text=black, font=\scriptsize] at(0,1){$#1$};},
    perp/.default={}}
\tikzset{perp_blue/.pic={\draw[blue,-latex,ultra thick,-](0,-.2)--(0,.2); 
    \node[text=black, font=\scriptsize] at(0,1){$#1$};},
    perp2/.default={}}
\pgfplotsset{compat=1.13}
\newtheorem{errest}{Error predictor}
\numberwithin{errest}{section}
\def\kth{Department of Mathematics, KTH Royal Institute of Technology, Stockholm, Sweden}
\title{Adaptive singularity swap quadrature for near-singular layer potentials on axisymmetric surfaces}
\author{David Krantz%
  \thanks{\kth\,
  ({\tt davkra@kth.se}).}
    \and
Anna-Karin Tornberg%
  \thanks{\kth\,
 ({\tt akto@kth.se}).}
}
\date{\today}
\begin{document}

\maketitle

\begin{abstract}\label{s:abstract}When numerically evaluating layer potentials at target points close to the domain boundary, specialized quadrature techniques are required for accuracy because of rapid variations in the integrand. To efficiently achieve a prescribed error tolerance, we introduce an adaptive quadrature method for smooth axisymmetric surfaces in which all algorithmic choices are determined automatically from the requested error tolerance. Standard quadrature is used wherever it is sufficient, while a specialized near-quadrature correction is applied only for those target points where additional accuracy is required. This correction combines singularity swap quadrature in the azimuthal direction with adaptive refinement in the polar direction; on the resulting refined polar grid, either standard quadrature or singularity swap quadrature is used depending on the predicted quadrature error. The method is coupled to a standard quadrature based on the trapezoidal rule in the azimuthal direction and Gauss--Legendre quadrature in the polar direction, and is activated only when that rule is predicted to be insufficient. Quadrature and interpolation error predictors are derived using complex analysis and are used to control both activation and refinement. While each surface is assumed to be axisymmetric, the layer density and the overall geometry need not be, allowing applications to configurations with multiple smooth axisymmetric bodies and patchwise discretizations. Numerical examples for Laplace, Helmholtz, and Stokes layer potentials demonstrate reliable error control across a range of geometries, including multi-body configurations.

\end{abstract}

\begin{keywords}
Nearly singular, close evaluation, singularity swap quadrature, error control, integral equation, body of revolution
\end{keywords}

\section{Introduction}
\label{s:introduction}
We consider the numerical evaluation of layer potentials of the form
\begin{equation}
    u(\xx) = \int_S\frac{k(\xx,\yy)~\sigma(\yy)}{\left\|\yy-\xx\right\|^{2p}}\dS(\yy),\quad p=1/2,~3/2,~5/2,
    \label{eq:generic_layer_potential}
\end{equation}
where $\dS$ is the surface element measure of a smooth axisymmetric surface $S\subset\mathbb{R}^3$, $\sigma$ is a smooth scalar- or vector-valued function defined on $S$, and $k(\xx,\yy)$ is a smooth function originating from a fundamental solution of an elliptic partial differential equation (PDE). The evaluation (target) point $\xx\in\mathbb{R}^3$ may lie far from or close to the surface, but not on it.

Layer potentials of the form \eqref{eq:generic_layer_potential} arise in boundary integral formulations of elliptic PDEs and can be viewed as a convolution of the PDE's Green's function with the unknown ``density'' $\sigma$ over the boundary. Enforcing boundary conditions leads to an integral equation for $\sigma$, and once this equation is solved, the solution anywhere in the domain is obtained by evaluating the associated layer potential.

Such settings appear in many applications. At the microscale, for example, Stokes flow governs the motion of fluid–particle systems. These arise in the rheology of fiber and polymer suspensions \cite{roure2019}, suspensions of spheres \cite{reddig2013}, the design of new materials \cite{Hakansson2014,calabrese2020}, and the self-assembly of biological or synthetic particles \cite{wykes2016}. Axisymmetric geometries such as spheroids, rods, rings, and general bodies of revolution occur naturally in these contexts. Axisymmetric geometries are also important in acoustics and wave propagation governed by the Helmholtz equation, particularly in the design of absorbing surfaces and high-power loudspeakers. Boundary integral solvers have proven effective in both Stokes \cite{MALHOTRA2024112855,bagge2021,AFKLINTEBERG2016420,CORONA2017504,YAN2020109524} and Helmholtz \cite{YOUNG20124142,LIU2016226,HELSING2014686} settings. Nevertheless, accurate and error-controlled evaluation of layer potentials close to the surface remains a challenging and active area of research.

The difficulty is well known. As the target approaches the surface, the Green's function develops a sharp peak and the integrand varies rapidly. A straightforward remedy is to upsample the density, that is, interpolate $\sigma$ to a finer surface grid and apply a fixed \textit{standard quadrature} based on the surface discretization. However, the refinement needed to maintain accuracy grows dramatically as the target-surface distance shrinks, making this approach inefficient. This creates a need for \textit{special quadrature} methods designed specifically for \textit{nearly singular} integrals.

A large body of work addresses this need. In two dimensions, where boundary integrals reduce to line integrals, the problem is largely considered ``solved'', but in three dimensions it remains an active research area. Quadrature by expansion (QBX) \cite{KLOCKNER2013332,barnett2014,AFKLINTEBERG2016420} was first introduced in two dimensions and later extended to three dimensions. Since QBX integrates naturally with spherical-harmonic-based fast multipole methods, an integration of the two was envisioned. Yet achieving high efficiency proved difficult, and substantial effort was required before robust software for Laplace and Helmholtz problems became available \cite{WALA2019655}. Important advances include target-specific expansions \cite{Siegel2018}, which significantly reduce near-field cost and enable faster evaluations \cite{Wala2020}. 
Another promising direction, introduced in \cite{zhu2022} and improved in \cite{jiang2024}, transforms a surface integral into a collection of line integrals via Stokes' theorem. These nearly singular line integrals are then evaluated using the \textit{singularity swap quadrature} (SSQ) method, first proposed in \cite{AFKLINTEBERG2021} and subsequently extended in \cite{afKlinteberg2024,bao2024}, together with the recent stabilization in \cite{krantz2025stabilizingsingularityswapquadrature}. SSQ ``swaps'' the original nearly singular factor for a simpler one with matching singularities in the complex parameter plane. The remaining smooth factor is expanded in a suitable basis, and each basis function is then integrated analytically against the simplified near-singular term. These analytic integrals are typically computed via recurrence relations, whose stability may require additional care. Although the current Stokes'-theorem formulation is limited to Laplace layer potentials, the approach shows considerable potential.

Like QBX, the hedgehog or line-extrapolation method \cite{YING2006247,Bagge2023} 
exploits the fact that the layer-potential field remains smooth even when the integrand is sharply peaked. The potential is evaluated by standard quadrature at off-surface points farther away along a line from the surface, and is then extrapolated toward it, optionally including the surface value itself when available. Although appealing in its simplicity, the achievable accuracy is strongly dependent on the extrapolation distance, and the method's non-trivial parameter selection makes it challenging to use optimally.

Regularization methods replace the singular kernel by a smooth approximation, and recent work offers high-order approximations in the regularization parameter \cite{TlupovaBeale2025ArXiv}. One remaining difficulty lies in balancing the regularization error against the quadrature error, which increases as the regularization error decreases. For spherical geometries, vectorial spherical-harmonic-based quadrature has been explored \cite{CORONA2018327} and was recently extended to oblate and prolate spheroids \cite{crowder2025boundaryintegralequationanalysis}.

Axisymmetric geometries have also motivated significant work on specialized integral equation solvers and quadrature schemes. For Stokes flows, one strategy for making QBX more efficient is to precompute target-specific quadrature weights \cite{AFKLINTEBERG2016420,bagge2021}. This hides the QBX cost for on-surface evaluations. For off-surface, close evaluations, however, the expansion still must be recomputed because the target locations are not known in advance. This remains a computational bottleneck, and local panel-based forms of QBX could potentially mitigate this cost. For Helmholtz problems, the approach of \cite{YOUNG20124142} applies a discrete Fourier transform in the azimuthal direction, reducing the problem to Fourier integrals along the generating curve that are evaluated by analytic recursions. Helsing and Karlsson \cite{HELSING2014686} improved this by introducing analytic product integration in the axial direction based on \cite{HELSING2008}, and by addressing instabilities in the Fourier recurrences. These methods rely on the axisymmetric nature of the entire problem. While the quadrature ideas might extend to problems where only the geometry is axisymmetric, this was not explored.

Because special quadrature schemes are substantially more expensive than standard quadratures, it is desirable in any boundary integral method to activate them only when strictly necessary. This in turn requires a reliable mechanism for determining, for each target point, whether the standard quadrature is sufficiently accurate. In this work, that role is played by the \textit{error predictors} introduced in \cite{AFKLINTEBERG2022} and later refined for axisymmetric geometries in \cite{SORGENTONE2023}, both of which build on the asymptotic error analysis developed over a sequence of earlier studies (reviewed in \cite[Section 1]{AFKLINTEBERG2022}).

These predictors approximate the quadrature error by locating the relevant complex singularities of the integrand and inserting their positions into closed-form asymptotic formulas derived from the complex-variable framework of \cite{DONALDSON1972,ELLIOT2008,klinteberg2018}. Although they are not rigorous upper bounds, they are designed to be computationally inexpensive and practically effective at flagging when the standard quadrature is inadequate. 
In practice, they have proven remarkably accurate, making them well suited as a decision mechanism for when to invoke special quadrature.

\begin{figure}[t!]
\centering
%
%
\begin{tikzpicture}

\begin{axis}[%
width=0.6*4.521in,
height=3in,
at={(0.758in,0.405in)},
scale only axis,
point meta min=-14,
point meta max=0,
xmin=0,
xmax=1,
ymin=0,
ymax=1,
axis line style={draw=none},
ticks=none,
axis x line*=bottom,
axis y line*=left,
colormap={mymap}{[1pt] rgb(0pt)=(0.230033,0.298999,0.754002); rgb(1pt)=(0.438654,0.574065,0.953911); rgb(2pt)=(0.667541,0.779704,0.993653); rgb(3pt)=(0.865395,0.86541,0.865396); rgb(4pt)=(0.969053,0.721418,0.612449); rgb(5pt)=(0.908216,0.459395,0.358028); rgb(6pt)=(0.705998,0.0161265,0.150001)},
colorbar horizontal,
colorbar style={at={(0.5,1.03)}, anchor=south, xticklabel pos=upper, xlabel style={font=\color{white!15!black}}, xlabel={$\log_{10}(\textrm{Absolute error})$}},
colorbar sampled,
colorbar style={samples=8}
]
\end{axis}
\end{tikzpicture}%
\begin{subfigure}[t]{0.3\textwidth}
    \vspace{-22em}
    \centering
    \includegraphics[trim={4.3cm 2cm 4.3cm 1.7cm},clip,width=\linewidth]{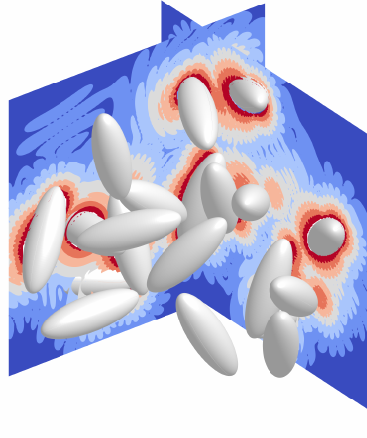}
    \caption{}
\end{subfigure}%
~ 
\begin{subfigure}[t]{0.3\textwidth}
\vspace{-22em}
    \centering
    \includegraphics[trim={4.3cm 2cm 4.3cm 1.7cm},clip,width=\linewidth]{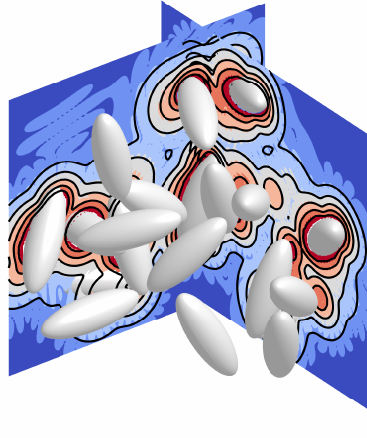}
    \caption{}
\end{subfigure}%
~ 
\begin{subfigure}[t]{0.3\textwidth}
\vspace{-22em}
    \centering
    \includegraphics[trim={4.3cm 2cm 4.3cm 1.7cm},clip,width=\linewidth]{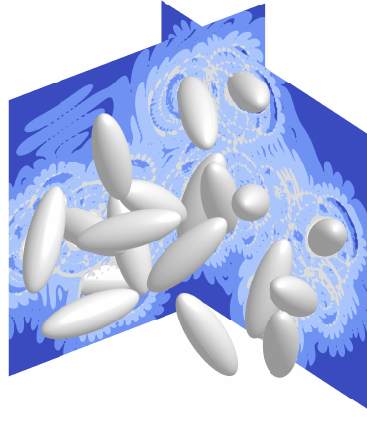}
    \caption{}
\end{subfigure}
\vspace{-3em}
\caption{Illustration of the S3Q workflow for evaluating a layer potential on a collection of smooth axisymmetric particles with prescribed tolerance $10^{-6}$. Panel (a) shows the loss of accuracy of standard quadrature near the particle surfaces. Panel (b) shows the quadrature error predictor, where the solid black contour lines indicate predicted error levels $10^{-10}$, $10^{-8}$, $10^{-6}$, $10^{-4}$, and $10^{-2}$, thereby identifying the regions requiring special quadrature. Panel (c) shows the error obtained using S3Q, demonstrating that the prescribed accuracy is achieved across all target points.}
\label{fig:s3q_flowchart}
\end{figure}

\subsection{Contributions and outline}\label{ss:contributions_outline}
The main contribution of this paper is an adaptive semi-analytic quadrature method for evaluating nearly singular layer potentials of the form \eqref{eq:generic_layer_potential} on smooth axisymmetric surfaces. The method combines singularity swap quadrature (SSQ) in the azimuthal direction, and when necessary, an analogous SSQ-based treatment in the polar direction, combined with adaptive refinement. Assuming only that the underlying surface discretization resolves the geometry and layer density, the method automatically selects all internal parameters. The \textit{only} required user input is the desired accuracy tolerance, although the order of the Gauss--Legendre rule used in the polar direction may be specified for efficiency. Because the resulting scheme uses SSQ in both surface-parameter directions, we refer to it as \textit{singularity swap surface quadrature} (S3Q). Figure \ref{fig:s3q_flowchart} illustrates the basic workflow.

The contributions of this paper, each of independent interest, are as follows:

\begin{itemize}
\item \textbf{SSQ for near-singular line integrals on closed curves in three dimensions.} We extend the SSQ method \cite{AFKLINTEBERG2021,afKlinteberg2024,bao2024} to near-singular line integrals over smooth closed curves embedded in three dimensions, discretized by the trapezoidal rule. This forms the one-dimensional building block used later in the azimuthal direction of the surface algorithm.
\item \textbf{Quadrature and interpolation error predictors.} We derive error predictors for adaptive refinement in the polar direction, including a new interpolation predictor obtained by combining the Hermite interpolation formula \cite[Theorem~11.1]{trefethen2019} with the complex-variable framework of \cite{AFKLINTEBERG2022}. This framework, previously used only for quadrature error prediction, is shown to yield accurate and practical interpolation error predictors for functions with near-singular branch-point behavior. These predictors are used to control the adaptive refinement in the polar direction.
\item \textbf{The S3Q algorithm for axisymmetric surface integrals}. By combining SSQ in both the azimuthal and polar directions, using the azimuthal SSQ as a building block and adaptive, predictor-based refinement in the polar direction, we obtain a fully automated and error-controlled scheme for evaluating single- and double-layer potentials for the 
Laplace, Helmholtz, and Stokes equations for target points close to smooth axisymmetric surfaces.
\end{itemize}

The outline of this paper is as follows. Section \ref{s:preliminaries} introduces notation and basic formulas. Section \ref{s:ssq} develops SSQ for near-singular line integrals on closed curves in three dimensions, providing the one-dimensional building block used in the azimuthal direction of S3Q. Section \ref{s:s3q} lifts this machinery to axisymmetric surface integrals and combines it with adaptive refinement in the polar direction to obtain the full S3Q algorithm. Section \ref{s:error_estimates} develops the practical quadrature and interpolation error predictors that drive this adaptive refinement. Section \ref{s:root_finding} discusses the computation of complex roots of distance functions used in the predictors. Section \ref{s:computational_complexity} analyzes complexity, and Section \ref{s:numerical_experiments} presents numerical results. We conclude in Section \ref{s:conclusions}. Appendices \ref{a:proofs}--\ref{a:rec_stability} contain auxiliary proofs, derivations of analytical root formulas, and details on stabilizing the recurrence relations used in the SSQ weights.

\section{Preliminaries}
\label{s:preliminaries}
\subsection{Geometry and parameterization}\label{ss:geometry_param}
Let $S$ be a smooth surface in $\mathbb{R}^3$ parameterized by $\ggamma:E\rightarrow\mathbb{R}^3$, where $E=\{E_1\times E_2\}\subset\mathbb{R}^2$. The layer potential \eqref{eq:generic_layer_potential} can then be written in parameter space as
\begin{equation}
    u(\xx) = \iint_E \frac{k(\xx,\ggamma(\theta,\varphi))~\sigma(\ggamma(\theta,\varphi))}{\left\|\ggamma(\theta,\varphi)-\xx\right\|^{2p}}~\left\|\frac{\partial\ggamma}{\partial \theta} \times \frac{\partial\ggamma}{\partial\varphi}\right\|\dphi\dtheta = \iint_E\frac{f(\theta,\varphi)}{\left\|\ggamma(\theta,\varphi)-\xx\right\|^{2p}}\dphi\dtheta,
    \label{eq:parameterized_layer_potential}
\end{equation}
where the function $f(\theta,\varphi)$ contains all the smooth components, and implicitly depends on the target point $\xx$. 

Throughout this paper we consider axisymmetric surfaces. We take $E=\{(\theta,\varphi)\in[0,\pi]\times[0,2\pi)\}$ and use the axisymmetric parametrization
\begin{equation}
    \ggamma(\theta,\varphi) = \left(a(\theta)\sin(\theta)\cos(\varphi),~a(\theta)\sin(\theta)\sin(\varphi),~b(\theta)\cos(\theta)\right),
    \label{eq:gamma_axi}
\end{equation}
where $a(\theta)>0$ and $b(\theta)>0$ are smooth analytic functions.

To simplify later notation, it is convenient to introduce a linear mapping between $t\in[-1,1]$ and a subinterval $[\tha,\thb]\subseteq[0,\pi]$
\begin{equation}
    \theta(t,\tha,\thb) = \frac{\tha+\thb}{2} + \thsc t,\quad \thsc = \frac{\thb-\tha}{2}.
    \label{eq:t2theta_map}
\end{equation}
with inverse
\begin{equation}
    t(\theta,\tha,\thb) = \frac{1}{\thsc}\left(\theta-\frac{\tha+\thb}{2}\right).
    \label{eq:theta2t_map}
\end{equation}
When $[\tha,\thb]=[0,\pi]$, we write simply $\theta(t)$ and $t(\theta)$.

Next, we define the squared-distance function for the surface $\ggamma(\theta,\varphi)=(\gamma_1(\theta,\varphi),\gamma_2(\theta,\varphi)),\gamma_3(\theta,\varphi))$ and a target point $\xx=(x,y,z)$,
\begin{equation}
    R^2(\theta,\varphi,\xx) \coloneqq \|\ggamma(\theta,\varphi)-\xx\|^2 = \left(\gamma_1(\theta,\varphi)-x\right)^2 + \left(\gamma_2(\theta,\varphi)-y\right)^2 + \left(\gamma_3(\theta,\varphi)-z\right)^2.
    \label{eq:R2}
\end{equation}
Using \eqref{eq:t2theta_map}, we write the layer potential compactly as
\begin{equation}
    \I[\Theta_p](\xx) = \I_t\I_\varphi[\Theta_p](\xx) = \iint_E \Theta_p(\theta(t,\tha,\thb),\varphi,\xx)\thsc\dphi\dt, \quad \Theta_p(\theta,\varphi,\xx) = \frac{f(\theta,\varphi)}{\left(R^2(\theta,\varphi,\xx)\right)^{p}}.
    \label{eq:Itheta}
\end{equation}
Here, the notation $\I_t$ and $\I_\varphi$ indicate integration in the $t$- and $\varphi$-direction, respectively. We will later use similar subscripts to distinguish in which direction an operator is applied.

\subsection{Discretization}\label{ss:discretization}
A natural approximation of \eqref{eq:Itheta} is the tensor-product rule formed from an $\nt$-point Gauss--Legendre quadrature rule on $t\in[-1,1]$ nodes $\{t_k\}_{k=1}^\nt$ and weights $\{\wt\}_{k=1}^\nt$, and an $\nphi$-point trapezoidal rule on $\varphi\in[0,2\pi)$ with nodes $\{\varphi_\ell\}_{\ell=1}^\nphi$ and weights $\{\wphi\}_{\ell=1}^\nphi$. The standard quadrature becomes
\begin{equation}
    \Q_{\nt,\nphi}[\Theta_p](\xx)=\Q_{t,\nt}\Q_{\varphi,\nphi}[\Theta_p](\xx) = \sum_{k=1}^\nt\sum_{\ell=1}^\nphi \frac{f(\theta(t_k),\varphi_\ell)\wphi\wt\thsc}{\left(R^2(\theta(t_k),\varphi_\ell,\xx)\right)^{p}}.
    \label{eq:reg_quad}
\end{equation}
with corresponding error
\begin{equation}
    \left|\E_{\nt,\nphi}[\Theta_p](\xx)\right| = \left|\I[\Theta_p](\xx) - \Q_{\nt,\nphi}[\Theta_p](\xx)\right|.
    \label{eq:reg_quad_err}
\end{equation}

\subsection{Complex roots of \boldmath{$R^2(\theta,\varphi,\xx)$}}\label{ss:roots_of_R2}
Although the integrand in \eqref{eq:Itheta} is smooth for all real $(\theta,\varphi)$ with target $x\notin S$, it develops sharp peaks when the target lies close to the surface. For a fixed value of one parametrization variable the function becomes singular, i.e., $R^2(\theta,\varphi,\xx)$ has roots at certain points in the complexified plane of the other variable.\footnote{$R^2(\theta,\varphi,\xx)$ is no longer a norm when evaluated with complex arguments, in which case we use the right-most expression in \eqref{eq:R2}.} These singularities bound the region of analyticity. Several complex-conjugate root pairs may exist, but their influence decays exponentially with distance from the real interval; only the closest pair is relevant for our purposes.\footnote{For some target locations, such as points on the symmetry axis of an axisymmetric surface, no such complex roots exist because the distance function remains strictly positive for all complexified angles.}

For fixed $\theta$, we denote by $\{\phiroot(\theta,\xx),\overline{\phiroot(\theta,\xx)}\}$ the pair closest to $E_2$, i.e.~the pair that governs the near-singular behavior. They satisfy,
\begin{equation}
    R^2\left(\theta,\phiroot(\theta,\xx),\xx\right) = R^2\left(\theta,\overline{\phiroot(\theta,\xx)},\xx\right) = 0.
\end{equation}

Similarly, fixing $\varphi$ and treating $\theta$ (or equivalently $t$) as complex, the relevant roots are \linebreak$\{\theta(t_0(\varphi,\xx)),\theta(\overline{t_0(\varphi,\xx)})\}$, where $\troot(\varphi,\xx)$ solves $R^2(\theta(t),\varphi,\xx)=0$ and lies closest to the interval $E_1$.

These roots can sometimes be expressed analytically, but in general they must be found by a one-dimensional numerical root search.
Their computation is discussed in Section~\ref{s:root_finding}. From this point forward, we write simply $\phiroot$ and $t_0$, leaving their dependencies implicit.

\section{Singularity swap quadrature for near-singular line integrals on closed curves in \boldmath{$\mathbb{R}^3$}}
\label{s:ssq}
In the axisymmetric setting, evaluation of the layer potential \eqref{eq:generic_layer_potential} naturally leads to a family of one-dimensional integrals obtained by fixing the polar parameter $\theta$ and integrating in the azimuthal direction. Each such slice is an integral over a circle in $\mathbb{R}^3$, and its accurate evaluation is essential for resolving nearly singular behavior when the target point lies close to the surface.

Motivated by this, we develop singularity swap quadrature (SSQ) for the more general task of evaluating integrals of the form \eqref{eq:generic_layer_potential} over an arbitrary smooth analytic closed curve $\Gamma$, following \cite{AFKLINTEBERG2021}. This extension is of independent interest, but in the present paper its main role is to provide the one-dimensional machinery used in the azimuthal direction of the surface quadrature method developed in Section \ref{s:s3q}.

Let $\boldsymbol{\xi}:[0,2\pi)\rightarrow\mathbb{R}^3$, where $\boldsymbol{\xi}(\phi)=(\xi_1(\phi),\xi_2(\phi),\xi_3(\phi))$, be an analytic, $2\pi$-periodic parametrization of $\Gamma$. To avoid confusion with the azimuthal parameter $\varphi$ used for axisymmetric surfaces, we reserve $\phi$ for the curve parameter.\footnote{When $\Gamma$ arises by fixing $\theta$ on an axisymmetric surface, one simply has $\boldsymbol{\xi}(\phi)=\ggamma(\theta,\phi)$.}

Define the squared-distance function
\begin{equation}
    R(\phi)^2 \coloneqq \|\boldsymbol{\xi}(\phi)-\xx\|^2 = \left(\xi_1(\phi)-x\right)^2+\left(\xi_2(\phi)-y\right)^2+\left(\xi_3(\phi)-z\right)^2
\end{equation}
Let $\phi_0$, $\overline{\phi_0}$ be the complex-conjugate roots of $R(\phi)^2$ closest to the interval $[0,2\pi)$, assuming they lie within the region of analyticity of $\boldsymbol{\xi}$.  In practice, $\phi_0$ can be found efficiently through a one-dimensional numerical root search, see \cite[Section~2.1]{afKlinteberg2024} for details.

The parametrized integral takes the form
\begin{equation}
    I_p = I_p(\xx) = \int_{0}^{2\pi} \frac{g(\phi)}{(R(\phi)^2)^p}\dpphi,\quad p=1/2,~3/2,~5/2,
    \label{eq:Ip}
\end{equation}
with $g(\phi)\coloneqq k(\xx,\boldsymbol{\xi}(\phi))\sigma_\Gamma(\phi)\|\boldsymbol{\xi}'(\phi)\|$, where $\sigma_\Gamma(\phi)$ denotes the restriction of the density $\sigma$ to the curve $\Gamma$.

The central idea of \cite{AFKLINTEBERG2021} is to transform the integral \eqref{eq:Ip} over a curve in $\mathbb{R}^3$ into an integral with a known singularity structure on a simple domain in $\mathbb{C}$. Once the singular points---namely the complex roots of the squared-distance function---are identified via analytic continuation, the integrand can be rewritten in terms of a simpler factor that exhibits the same local behavior at those points.
In the present setting the relevant singularities are $\phi_0$, $\overline{\phi_0}$, and the elementary function that vanishes at the same location is $|e^{i\phi}-e^{i\phi_0}|^2$.

``Swapping'' the singularity in \eqref{eq:Ip} using this factor yields
\begin{equation}
    I_p = \int_{0}^{2\pi} \frac{G(\phi)}{|e^{i\phi}-e^{i\phi_0}|^{2p}}\dpphi,
\end{equation}
where the ``SSQ numerator'' $G(\phi)$, defined by
\begin{equation}
    G(\phi)\coloneqq g(\phi)\frac{|e^{i\phi}-e^{i\phi_0}|^{2p}}{(R(\phi)^2)^p},
    \label{eq:ssq_G}
\end{equation}
is a smooth and $2\pi$-periodic function on $[0,2\pi)$ (in fact analytic in a typically large open neighborhood of $[0,2\pi)$).

Suppose $G$ is sampled at $n_\phi$ equispaced nodes, with $n_\phi$ even. Expanding $G$ in a truncated Fourier series and integrating termwise gives
\begin{equation}
    I_p \approx \sum_{k=-n_\phi/2}^{n_\phi/2-1} \widehat{G}_k(\phi_0)\underbrace{\int_{0}^{2\pi} \frac{e^{ik\phi}}{|e^{i\phi}-e^{i\phi_0}|^{2p}}\dpphi}_{\eqqcolon S_k^p(\phi_0)},
    \label{eq:Ip_approx_ssq}
\end{equation}
where $\widehat{G}_k$ are the Fourier coefficients of $G$ obtained via a fast Fourier transform (FFT) at $\mathcal{O}(n_\phi\log n_\phi)$ cost.

Thus $I_p$ is approximated by an interpolatory quadrature rule on the unit circle, with weights $S_k^p$ equal to the Fourier coefficients of $|e^{i\phi}-e^{i\phi_0}|^{-2p}$. A key advantage is that all near-singularity is confined to the Fourier basis integrals $S_k^p$, which are independent of the density $\sigma$. 
These integrals can be evaluated analytically, at cost $\mathcal{O}(n_\phi)$, using the recurrence relations stated in the following lemma. Its proof is given in Appendix \ref{a:proofs}.

In the lemma, and throughout the azimuthal SSQ construction, we take $\varphi_0$ to be the root with positive imaginary part. Choosing the conjugate root gives an equivalent SSQ formulation, differing only by normalization factors in intermediate expressions.

\begin{lemma}\label{lem:mu}Let $k\in\mathbb{Z}$, $p=\overline{p}+1/2$ with $\overline{p}\in\mathbb{Z}^+$, and $\phi_0=\alpha+i\beta$ with $\alpha,\beta\in\mathbb{R}$, $\beta>0$. Define $\chi=e^{-\beta}$, so that $0<\chi<1$. Then the Fourier basis integrals
\begin{equation}
    S_k^p(\phi_0) = \int_{0}^{2\pi}\frac{e^{ik\phi}}{|e^{i\phi}-e^{i\phi_0}|^{2p}}\dpphi
\end{equation}
can be computed as
\begin{equation}
    S_k^p(\phi_0) = \frac{2e^{ik\alpha}}{(1-\chi)^{2p-1}}\mu_k^p(\chi),\qquad k\in\mathbb{Z},
\end{equation}
where $\mu_{-k}^p(\chi)=\mu_k^p(\chi)$. For $m\geq0$, $\mu_m^p(\chi)$ is computed from the recurrences
\begin{equation}
    \mu_m^p(\chi) =
    \begin{cases}
        \dfrac{1+\chi^2}{\chi}\dfrac{2(m-1)}{2m-1}\mu_{m-1}^p(\chi) - \dfrac{2m-3}{2m-1}\mu_{m-2}^p(\chi), & p=1/2\text{ and } m=2,3,\dots,\\
        \dfrac{1+\chi^2}{2\chi}\mu_{m-1}^{p}(\chi) - \dfrac{(1-\chi)^2}{2\chi}\dfrac{p+m-2}{p-1}\mu_{m-1}^{p-1}(\chi), & p>1/2\text{ and } m=1,2,\dots.
    \end{cases}
    \label{eq:mu_rec}
\end{equation}
The initial values of $\mu_m^p(\chi)$ involve the complete elliptic integrals of the first and second kind,
\begin{equation}
    K(\chi^2) = \int_0^{\pi/2} \frac{\dtheta}{\sqrt{1-\chi^2\sin^2(\theta)}},\qquad E(\chi^2) = \int_0^{\pi/2}\sqrt{1-\chi^2\sin^2(\theta)}\dtheta,
    \label{eq:ellipticKE}
\end{equation}
and for $p=1/2,~3/2,~5/2$, are given by
\begin{align}
\mu_0^{1/2}(\chi) &= 2K(\chi^2), \quad \mu_1^{1/2}(\chi) = \frac{2}{\chi}\left(K(\chi^2)-E(\chi^2)\right),\label{eq:mu0p1}\\
\mu_0^{3/2}(\chi) &= \frac{2}{1+\chi}\left(\frac{2}{1+\chi}E(\chi^2) - (1-\chi)K(\chi^2)\right),\label{eq:mu0p3}\\
\mu_0^{5/2}(\chi) &= \frac{2}{3(1+\chi)^4}\left(8(1+\chi^2)E(\chi^2)-(1-\chi)(1+\chi)(5+3\chi^2)K(\chi^2)\right).\label{eq:mu0p5}
\end{align}
Moreover, $S_k^p(\phi_0)=\overline{S_{-k}^p(\phi_0)}$.
\end{lemma}

\begin{remark}\label{rem:laplace_coefficients}It turns out that the quantities $\mu_k^p(\chi)$ satisfy $\mu_k^p(\chi)=\frac{2}{\pi}(1-\chi)^{2p-1}b_p^{(k)}(\chi)$, where $b_p^{(k)}$ are the so-called Laplace coefficients appearing in multipole expansions of potentials. For instance, consider two points $\rr=(r,\theta,\varphi)$, $\rr'=(r',\theta',\varphi')$ in spherical coordinates, and let $h=r'/r$ and $\psi$ denote the angle between the two vectors. Then the potential
\begin{equation}
    \frac{1}{|\rr-\rr'|^{2p}} = \frac{1}{r^{2p}}~\frac{1}{\left(1-2h\cos(\psi)+h^2\right)^p}
\end{equation}
can be expanded as
\begin{equation}
    \frac{1}{\left(1-2h\cos(\psi)+h^2\right)^p} = \frac{1}{2}b_p^{(0)}(h) + \sum_{k=1}^\infty b_p^{(k)}(h)\cos(k\psi).
    \label{eq:laplace_expansion}
\end{equation}
Laplace introduced these functions in the context of celestial mechanics in 1785 \cite{laplace1785}. Although similar recurrences exist in that literature, they are difficult to access and not accompanied by proofs, so we retain the derivations used here.
\end{remark}

\begin{remark}[Accuracy and stability of recurrence formulas for $\mu_k^p$]\label{rem:mu_rec_stability}The recurrences for $\mu_k^p(\chi)$ in \eqref{eq:mu_rec} can be numerically unstable: the desired solution tends to zero as the number of forward steps $k\rightarrow\infty$, while the complementary homogeneous recurrence solution grows exponentially (at a rate that increases as $\chi\rightarrow0$). After a number of forward steps, the desired vanishing solution may thus be obscured by the exponentially increasing component, readily introduced by roundoff errors. To control this, we use error predictors to determine when the forward recurrence remains below the desired tolerance; otherwise, we switch the stable method of \cite{ARNOLDUS1984}, in which the recurrence is recast as a tridiagonal boundary value problem with a vanishing endpoint condition. Further details appear in Appendix \ref{a:rec_stability}.
\end{remark}

With this one-dimensional SSQ machinery in place, we now return to the surface problem and use it as the core building block of S3Q in both the azimuthal and polar directions.

\section{The S3Q method}
\label{s:s3q}
In this section, we extend the one-dimensional construction of Section \ref{s:ssq} from curves to smooth axisymmetric surfaces. The resulting method, which we call \textit{singularity swap surface quadrature (S3Q)}, gives a fully automated scheme for evaluating layer potentials of the form \eqref{eq:generic_layer_potential} to prescribed accuracy. We begin with an overview before giving the details.

Using notation from Section \ref{ss:geometry_param}, the parameterized layer potential is
\begin{equation}
    \mathcal{I}(\xx) = \int_0^\pi\int_0^{2\pi} \frac{f(\theta,\varphi)}{(R^2(\theta,\varphi,\xx))^p}\dphi\dtheta.
    \label{eq:s3q_Ithetaphi}
\end{equation}
For each fixed $\theta$, the inner integral in $\varphi$ is taken over the circle $\Gamma_\theta = \{\ggamma(\theta,\varphi) : \varphi\in[0,2\pi)\}$. This is precisely the type of one-dimensional integral treated by the SSQ machinery of Section \ref{s:ssq}. 

We therefore use SSQ to evaluate the azimuthal integral. The result of this step is a function of $\theta$ to be integrated. Because the computation of the inner integral relies on recurrence formulas, simple closed-form expressions for this function are not available. Although its dominant singular behavior is significantly weakened after integrating in $\varphi$, it can still be sharply peaked when the target lies close to the surface. For this reason, the outer polar integral requires an adaptive strategy equipped with reliable quadrature and interpolation error predictors, together with an SSQ treatment of the residual singularity in the cases where regular Gauss–Legendre quadrature is insufficient.

The S3Q method can be summarized as follows:
\begin{enumerate}
\setcounter{enumi}{-1}
\item \textbf{Target classification.} Given a prescribed tolerance, use the quadrature error predictor of \cite{SORGENTONE2023} to determine whether regular tensor-product quadrature is expected to be insufficient. If so, proceed with the remaining steps.
\item \textbf{Adaptive polar subdivision.} Use the quadrature and interpolation error predictors of Section \ref{s:error_estimates} to adaptively refine the $\theta$-interval.
\item \textbf{Azimuthal SSQ.} For each $\theta$-panel, apply SSQ to the closed curve $\Gamma_\theta$ whenever the standard trapezoidal rule does not meet the required accuracy. This yields an approximation of
\begin{equation}
    \mathcal{I}_\varphi(\theta) = \int_0^{2\pi} \frac{f(\theta,\varphi)}{(R^2(\theta,\varphi,\xx))^p}\dphi.
    \label{eq:s3q_Iphi}
\end{equation}
\item \textbf{Polar SSQ:} For each $\theta$-panel, apply SSQ to the generating curve whenever the standard Gauss--Legendre rule does not meet the required accuracy. The output is an approximation of
\begin{equation}
    \mathcal{I}(\xx) = \int_0^\pi\mathcal{I}_\varphi(\theta)\dtheta.
    \label{eq:s3q_Itheta}
\end{equation}
\end{enumerate}

This structure enables S3Q to resolve near-singular behavior efficiently, activating special quadrature only when needed and relying on inexpensive standard quadrature elsewhere. The following subsections describe each component in detail.

\subsection{Singularity swap quadrature in the azimuthal variable}\label{ss:analytic_reduction_to_line_integral}
The goal of this subsection is to apply the SSQ machinery from Section \ref{s:ssq} to evaluate the nearly singular azimuthal integral \eqref{eq:s3q_Iphi} for each fixed value of the polar value $\theta$. Our main analytical result is stated in Theorem \ref{thm:phi_int}, which follows from two preparatory lemmas, and the SSQ formulas of Section \ref{s:ssq}. 

Let $\phiroot=\phiroot(\theta)$ denote the complex-valued root of $\Rth$ with respect to $\varphi$. For axisymmetric surfaces, $\varphi_0$ is available in closed form via the formulas of \cite{SORGENTONE2023}, restated in Lemma \ref{lem:phi0}. A key result in the axisymmetric setting is that the factor $|e^{i\varphi}-e^{i\varphi_0}|^{2}/\Rth$ appearing in the ``SSQ numerator'', similar to \eqref{eq:ssq_G}, is independent of $\varphi$. 
This property allows the azimuthal integral \eqref{eq:s3q_Iphi} to collapse to an expression involving only the Fourier coefficients of $f(\theta,\cdot)$ and the Fourier basis integrals $\mu_k^p$.

\begin{lemma}[Root of $R^2$ in $\varphi$ \cite{SORGENTONE2023}]\label{lem:phi0}Let $\ggamma(\theta,\varphi)$ be parameterized as in \eqref{eq:gamma_axi}, and let $\Rth$ be defined by \eqref{eq:R2}. For a target point $\xx=(x,y,z)\in\mathbb{R}^3$ with $\rho^2=x^2+y^2>0$ and $\xx\notin\ggamma$, and any $\theta\in(0,\pi)$, the equation $R^2(\theta,\varphi,\xx)=0$ has the complex solutions
\begin{equation}
    \phiroot(\theta) = \alpha \pm i \beta(\theta),\qquad \alpha=\atantwo(y,x),\qquad \beta(\theta)=\ln\left(\frac{\lambda(\theta)+\sqrt{\lambda(\theta)^2-\rho^2\sin^2(\theta)}}{\rho\sin(\theta)}\right)>0,
    \label{eq:phi0}
\end{equation}
where
\begin{equation}
    \lambda(\theta) = \frac{\atilde(\theta)^2+\rho^2+\bigl(\btilde(\theta)-z\bigr)^2}{2a(\theta)}, \qquad\atilde(\theta)=a(\theta)\sin(\theta),\qquad\btilde(\theta)=b(\theta)\cos(\theta),
    \label{eq:lambda}
\end{equation}
with $\lambda(\theta)>\rho\sin(\theta)$.
\end{lemma}

In what follows, $\phiroot$ denotes the root in \eqref{eq:phi0} with positive imaginary part, consistent with the convention of Section \ref{s:ssq}.

\begin{lemma}\label{lem:indep_quotient}Assume the setting and notation of Lemma \ref{lem:phi0}. Then
\begin{equation}
    \frac{|e^{i\varphi}-e^{i\varphi_0}|^{2}}{\Rth} = \frac{1}{a(\theta)}\frac{1}{\lambda(\theta)+\sqrt{\lambda(\theta)^2-\rho^2\sin^2(\theta)}},
    \label{eq:indep_quotient}
\end{equation}
which is independent of $\varphi$.
\end{lemma}

\begin{proof}
Let $\alpha=\atantwo(y,x)$, so that $x=\rho\cos(\alpha)$ and $y=\rho\sin(\alpha)$. Then, by the definition of $\lambda(\theta)$ and $R^2(\theta,\varphi,\xx)$ we find
\begin{equation}
R^2(\theta,\varphi,\xx) = 2a(\theta)\bigl(\lambda(\theta)-\rho\sin(\theta)\cos(\varphi-\alpha)\bigr).
\end{equation}
Write $\varphi_0=\alpha+i\beta$ with $\beta>0$, and set $\chi=e^{-\beta}$. From Lemma \ref{lem:phi0},
\begin{equation}
\chi = \frac{\lambda(\theta)-\sqrt{\lambda(\theta)^2-\rho^2\sin^2(\theta)}}{\rho\sin(\theta)}.
\end{equation}
Since
\begin{equation}
|e^{i\varphi}-e^{i\varphi_0}|^2 = 1+\chi^2-2\chi\cos(\varphi-\alpha),
\end{equation}
substitution gives
\begin{equation}
|e^{i\varphi}-e^{i\varphi_0}|^2 = \frac{\lambda(\theta)-\sqrt{\lambda(\theta)^2-\rho^2\sin^2(\theta)}}{\rho^2\sin^2(\theta)}\,2\bigl(\lambda(\theta)-\rho\sin(\theta)\cos(\varphi-\alpha)\bigr).
\end{equation}
Dividing by $R^2(\theta,\varphi,\xx)$ and rationalizing gives the stated formula.
\end{proof}

With these preparations we now present the main analytical reduction of the layer potential to a one-dimensional integral in $\theta$. The theorem below shows that, after azimuthal integration and Fourier truncation, the remaining integrand always contains a square-root-type singularity, and for $p>1/2$ a weakened near-singularity compared to the original.

In the next subsection, we analyze this structure in detail to determine how \eqref{eq:layer_potential_integrated} is best integrated in the polar direction.

\begin{theorem}[Azimuthal SSQ reduction formula]\label{thm:phi_int}Let
\begin{equation}
    \ggamma(\theta,\varphi) = \left(a(\theta)\sin(\theta)\cos(\varphi),~a(\theta)\sin(\theta)\sin(\varphi),~b(\theta)\cos(\theta)\right),
\end{equation}
be a smooth axisymmetric surface with $a(\theta)$, $b(\theta)>0$. Consider the layer potential
\begin{equation}
    u(\xx) = \int_{0}^{\pi}\int_{0}^{2\pi}\frac{f(\theta,\varphi)}{\left\|\ggamma(\theta,\varphi)-\xx\right\|^{2p}}\dphi\dtheta,
    \label{eq:layer_potential_thm}
\end{equation}
where $2p\in\mathbb{Z}^+$ and target $\xx=(x,y,z)\notin\ggamma$ with $\rho^2=x^2+y^2$. Let $\phiroot(\theta)$ and $\lambda(\theta)$ be defined by \eqref{eq:phi0}--\eqref{eq:lambda}, with $\phiroot$ denoting the root with positive imaginary part, and define $\chi(\theta)=e^{-\Im(\phiroot(\theta))}$.

Then, using the Fourier expansion of $f(\theta,\cdot)$ truncated to $n_\varphi$ terms, the layer potential \eqref{eq:layer_potential_thm} is approximated by
\begin{equation}
    u(\xx) \approx \int_{0}^\pi \frac{1}{\sqrt{a(\theta)}}~\frac{1}{\left(\lambda(\theta)+\sqrt{\lambda(\theta)^2-\rho^2\sin^2(\theta)}\right)^{1/2}}~\frac{1}{\left(\Rlambda(\theta)\right)^{p-1/2}}~\Fcal(\theta)\dtheta,
    \label{eq:layer_potential_integrated}
\end{equation}
with
\begin{equation}
    \Fcal(\theta) = 2\sum_{k=-n_\varphi/2}^{n_\varphi/2-1} \fhat_k(\theta)e^{ik\Re(\phiroot(\theta))}\mu_k^p(\chi(\theta)),
    \label{eq:F}
\end{equation}
where $\fhat_k(\theta)$ denotes the discrete azimuthal Fourier coefficients of $f(\theta,\varphi)$ computed from the even $\nphi$ samples $f(\theta,\varphi_j)$.
The functions $\mu_k^p$ are defined by the recurrences in \eqref{eq:mu_rec}, and the reduced squared-distance function $\Rlambda$ is
\begin{equation}
    \Rlambda(\theta) = \left(a(\theta)\sin(\theta)-\rho\right)^2 + \left(b(\theta)\cos(\theta)-z\right)^2.
    \label{eq:Rlambda}
\end{equation}
\end{theorem}

\begin{proof}
Let $q(\theta)=\bigl(\lambda(\theta)^2-\rho^2\sin^2(\theta)\bigr)^{1/2}$. By Lemma \ref{lem:indep_quotient} and \eqref{eq:R2}
\begin{equation}
\frac{f(\theta,\varphi)}{\left\|\ggamma(\theta,\varphi)-\xx\right\|^{2p}} = \frac{1}{a(\theta)^p}\frac{1}{(\lambda(\theta)+q(\theta))^p}\frac{f(\theta,\varphi)}{|e^{i\varphi}-e^{i\phiroot(\theta)}|^{2p}}.
\label{eq:proof1}
\end{equation}
Approximating $f(\theta,\cdot)$ by its $n_\varphi$-term Fourier interpolant and applying Lemma \ref{lem:mu} gives
\begin{equation}
\int_0^{2\pi} \frac{f(\theta,\varphi)}{|e^{i\varphi}-e^{i\phiroot(\theta)}|^{2p}}\dphi\approx\frac{\Fcal(\theta)}{(1-\chi(\theta))^{2p-1}},
\label{eq:proof2}
\end{equation}
with $\mathcal{F}$ defined in \eqref{eq:F}. Since $\chi(\theta)=(\lambda(\theta)-q(\theta))/(\rho\sin(\theta))$, direct computation gives
\begin{equation}
\bigl(1-\chi(\theta)\bigr)^2 = \frac{R_\lambda^2(\theta)}{a(\theta)\bigl(\lambda(\theta)+q(\theta)\bigr)}
\end{equation}
where $R_\lambda^2$ is defined in \eqref{eq:Rlambda}. Hence
\begin{equation}
\frac{1}{\bigl(1-\chi(\theta)\bigr)^{2p-1}} = \left(\frac{a(\theta)\bigl(\lambda(\theta)+q(\theta)\bigr)}{\Rlambda(\theta)}\right)^{p-1/2}.
\label{eq:proof3}
\end{equation}
Substituting \eqref{eq:proof1}, \eqref{eq:proof2}, and \eqref{eq:proof3} into \eqref{eq:layer_potential_thm} gives \eqref{eq:layer_potential_integrated}.
\end{proof}

\subsection{Examination of integral in the polar direction}\label{ss:regularity}
Following the azimuthal SSQ reduction in Section \ref{ss:analytic_reduction_to_line_integral}, the layer potential is approximated by a one-dimensional integral in the polar parameter $\theta$. The purpose of this section is to analyze the structure of the reduced integrand in \eqref{eq:layer_potential_integrated} as a function of $\theta$, and to identify the features that determine the accuracy of its numerical evaluation. 

We write \eqref{eq:layer_potential_integrated} in the form
\begin{equation}
    u(\xx) = \int_0^\pi \frac{1}{\sqrt{a(\theta)}} \Lsq(\theta,\xx)\Fcal(\theta,p,\xx) 
    \left(\Rlambda(\theta,\xx)\right)^{-(p-1/2)} \dtheta,
    \label{eq:layer_potential_abc}
\end{equation}
where
\begin{equation}
    \Lsq(\theta,\xx) = \left(\lamb+\sqrt{\lamb^2-\rho^2\sin^2(\theta)}\right)^{-1/2},
    \label{eq:Lsq}
\end{equation}
with $\lambda$ defined in \eqref{eq:lambda}. The function $\Fcal$ is written with three arguments to emphasize its dependencies.

Among the factors in the integrand, the inverse $\Rlambda$ term is the most difficult to resolve numerically, as it exhibits a nearly singular behavior for close target points. Its singularity order is, however, reduced compared to that of the original square-distance function $R^2$, and for $p=1/2$ this factor is identically equal to one. The function $\Fcal(\theta,p,\xx)$, defined via recurrence relations, may have a logarithmic near singularity\footnote{The function $\Fcal$ is defined in \eqref{eq:F}, with $\mu_k^p$ computed via the recurrence relations \eqref{eq:mu_rec}. The initial values \eqref{eq:mu0p1}-\eqref{eq:mu0p5} involve the complete elliptic integral of the first kind $K(r^2)$. As the evaluation point $\xx$ approaches the surface, the quantity $\chi(\theta)=e^{-|\Im(\phiroot(\theta,\xx))|} \rightarrow 1^-$, and $K(\chi^2)$ diverges logarithmically.}. The factor $\Lsq(\theta,\xx)$ has a nearly singular derivative and, while remaining bounded in magnitude, is nevertheless challenging to approximate accurately by global polynomial expansions. The remaining factor $1/\sqrt{a(\theta)}$ is benign.

Figure \ref{fig:cheb_value_coeffs} illustrates the typical behavior of the functions appearing in \eqref{eq:layer_potential_abc} for close evaluation of the layer potential \eqref{eq:parameterized_layer_potential} on a spheroidal surface. Each factor is represented by its global Chebyshev expansion, and the decay of the corresponding coefficient magnitudes is shown. The results indicate that $\Lsq(\theta,\xx)$ and $\Fcal(\theta,p,\xx)$ require comparable polynomial degrees to be resolved to a given accuracy, whereas for $p>1/2$ the factor $(\Rlambda(\theta,\xx))^{-(p-1/2)}$ exhibits significantly slower coefficient decay and therefore requires much higher polynomial degree.

For $p>1/2$, the function $(\Rlambda(\theta,\xx))^{-(p-1/2)}$ has a singularity at $\thetarootlambda\in\mathbb{C}$, a complex root of $\Rlambda$. This singularity can be regularized by multiplication with $|\theta-\thetarootlambda|^{2p-1}$. As shown in Figure \ref{fig:cheb_coeffs_p3}, this regularization dramatically reduces the number of Chebyshev modes required to resolve the factor, confirming that the slow decay observed in the unregularized factor is directly associated with the complex root structure of $\Rlambda$.

The functions $\Lsq(\theta,\xx)$ and $\Fcal(\theta,p,\xx)$ possess branch-point and logarithmic singularities, respectively, which limit the decay of global polynomial expansions, as observed in Figures~\ref{fig:cheb_coeffs_p1} and \ref{fig:cheb_coeffs_p3}. These singularities are integrable and localized near the same complex locations that govern the behavior of $\Rlambda$. While they affect the global approximation rate of the integrand, they do not introduce stronger singular behavior than that already present in the $\Rlambda$ factor.

For this reason, the numerical treatment focuses on $\Rlambda$. We employ an adaptive subdivision of the polar integration interval, and for $p>1/2$, an SSQ-based approach is used to handle the $\Rlambda$ factor, while for $p=1/2$ standard Gauss--Legendre quadrature is applied on each subinterval. Details of the adaptive discretization are given in Section \ref{ss:local_refinement}. We now proceed to describe the special quadrature construction for $p>1/2$.

\begin{figure}[!t]
\centering
%
%
\definecolor{mycolor1}{rgb}{1.00000,0.00000,1.00000}%
\begin{tikzpicture}

\begin{axis}[%
width=4.521in,
height=3.566in,
at={(0.758in,0.481in)},
scale only axis,
xmin=1,
xmax=2,
ymin=1,
ymax=2,
axis line style={draw=none},
ticks=none,
legend style={legend cell align=left, align=left, draw=white!15!black},
legend columns=5
]
\addplot [color=blue, line width=2.0pt]
  table[row sep=crcr]{%
0	0\\
};
\addlegendentry{$\Lsq$}

\addplot [color=mycolor1, line width=2.0pt]
  table[row sep=crcr]{%
0	0\\
};
\addlegendentry{$\Fcal$}

\addplot [color=red, line width=2.0pt]
  table[row sep=crcr]{%
0	0\\
};
\addlegendentry{$(\Rlambda)^{-(p-1/2)}$}

\addplot [color=black!30!green, line width=2.0pt]
  table[row sep=crcr]{%
0	0\\
};
\addlegendentry{$|\theta-\theta_0^\lambda|^{2p-1}(\Rlambda)^{-(p-1/2)}$}

\addplot [color=black, line width=2.0pt, dashed]
  table[row sep=crcr]{%
0	0\\
};
\addlegendentry{$\theta=\Re(\thetarootlambda)$}

\end{axis}
\end{tikzpicture}%
\begin{subfigure}[t]{0.45\textwidth}
\vspace*{-8.3cm}\includegraphics[width=\textwidth]{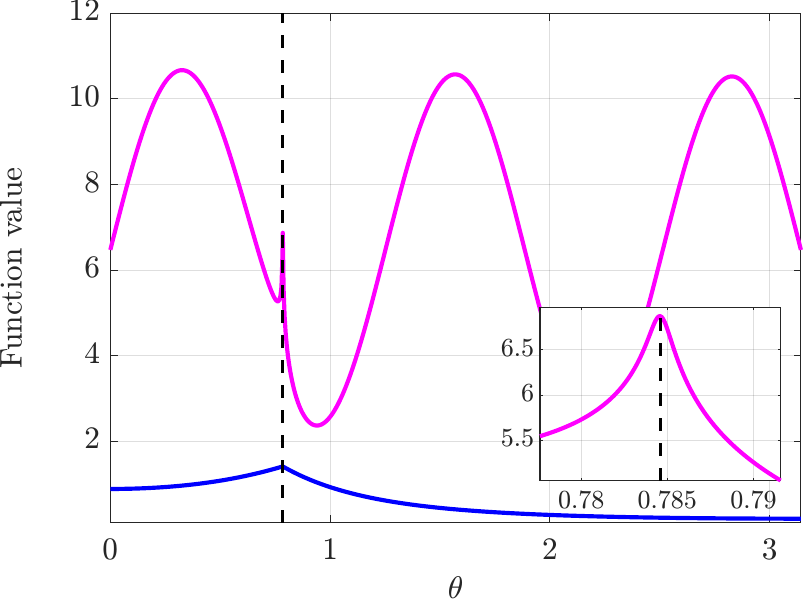}
\caption{$p=1/2$}
\label{fig:cheb_value_p1}
\end{subfigure}
\begin{subfigure}[t]{0.45\textwidth}
\vspace*{-8.3cm}\includegraphics[width=\textwidth]{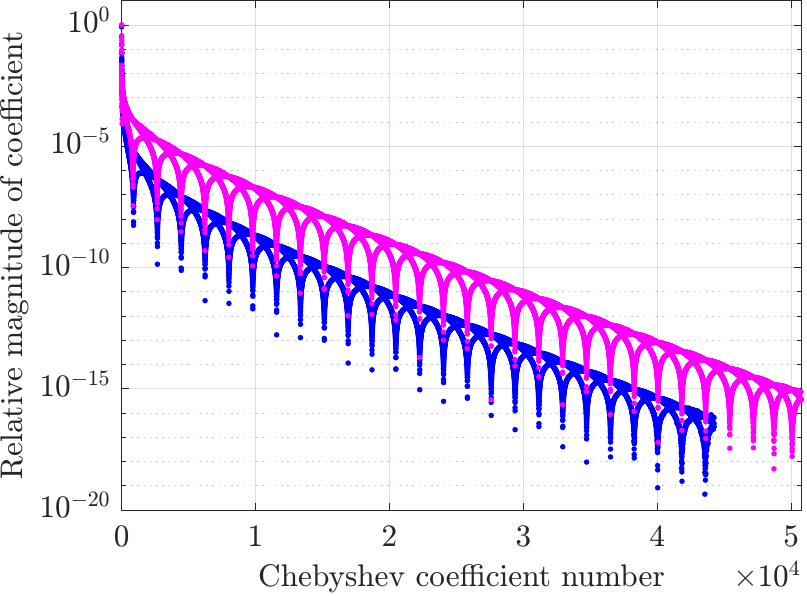}
\caption{$p=1/2$}
\label{fig:cheb_coeffs_p1}
\end{subfigure}
\begin{subfigure}[t]{0.45\textwidth}
\vspace*{-2.2cm}\includegraphics[width=\textwidth]{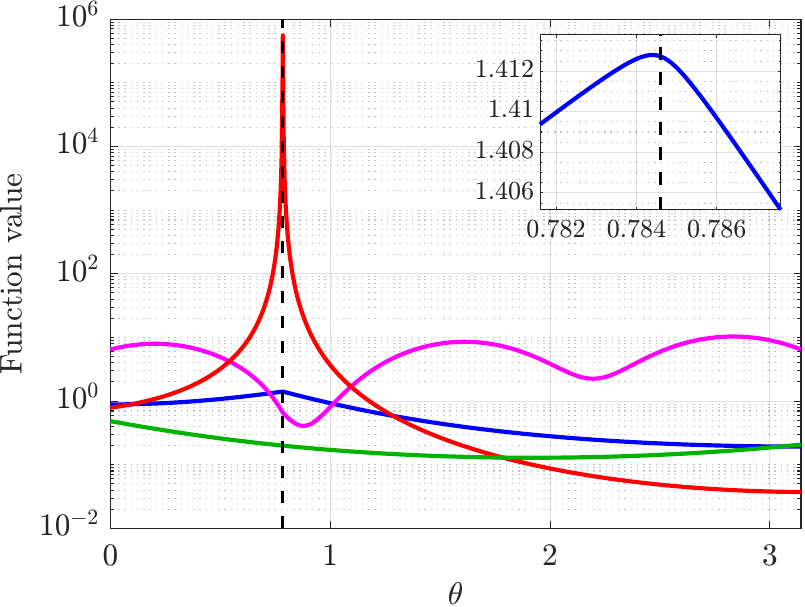}
\caption{$p=3/2$}
\label{fig:cheb_value_p3}
\end{subfigure}
\begin{subfigure}[t]{0.45\textwidth}
\vspace*{-2.2cm}\includegraphics[width=\textwidth]{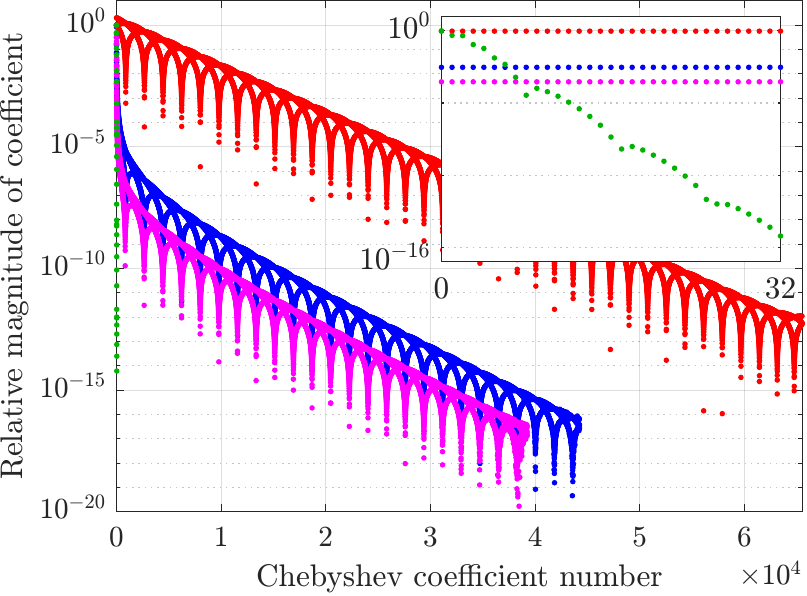}
\caption{$p=3/2$}
\label{fig:cheb_coeffs_p3}
\end{subfigure}
\caption{Typical behavior of the functions appearing in the integrand of the reduced polar integral \eqref{eq:layer_potential_abc}. Panels (a) and (c) show the individual factors for $p=1/2$ and $p=3/2$, respectively; the vertical dashed line marks $\theta=\Re(\thetarootlambda)$ for the given target point. Panels (b) and (d) show the relative decay of the corresponding Chebyshev coefficients. For $p=3/2$, the unregularized factor $(\Rlambda)^{-(p-1/2)}$ exhibits slow coefficient decay, while the regularized quantity $|\theta-\thetaroot^\lambda|^{2p-1}(\Rlambda)^{-(p-1/2)}$ (shown in green) has markedly lower frequency content. Here $\thetarootlambda$ denotes a complex root of $\Rlambda$ defined by \eqref{eq:theta0_spheroid} and \eqref{eq:Rlambda}, respectively. In this example, $k(\xx,\yy)=1$ and $\sigma(\ggamma(\theta,\varphi))=\sin(5\theta)e^{-\cos^2(\varphi)}+1.03$.}
\label{fig:cheb_value_coeffs}
\end{figure}

\subsection{Singularity swap quadrature in the polar variable}\label{ss:ssq_polar}
We now describe the application of singularity swap quadrature in the polar variable for the evaluation of the reduced integral  \eqref{eq:layer_potential_integrated} when $p>1/2$. The treatment is applied locally on a subinterval $[\tha,\thb]\subseteq[0,\pi]$ arising from the adaptive subdivision described later.

The procedure mirrors the steps used in the azimuthal direction in Section \ref{ss:analytic_reduction_to_line_integral}. It consists of identifying the complex roots of the relevant squared-distance function, swapping the singularity to a simpler function, 
expanding the remaining factors in a polynomial basis, and evaluating the resulting nearly singular basis integrals analytically.

Since $\Rlambda(\theta,\xx)$ is real for real $\theta$, its roots come in complex conjugate pairs. For spheroidal surfaces these roots are available analytically, while for general axisymmetric surfaces they are obtained via a one-dimensional complex root-finding procedure; see Section \ref{s:root_finding}. When the generating curve is not excessively curved and the target point $\xx$ is close to it, there is typically only a single pair of roots near the interval under consideration \cite{AFKLINTEBERG2021}. We denote this pair by $\{\thetarootlambda(\xx),\overline{\thetarootlambda(\xx)}\}=\{\theta(\trootlambda(\xx),\tha,\thb),\theta(\overline{\trootlambda(\xx)},\tha,\thb)\}$ where $\theta(t,\tha,\thb)$ is the affine map from $[-1,1]$ to $[\theta_a,\theta_b]$ defined in \eqref{eq:t2theta_map}. For brevity, the dependence of $\trootlambda$ on $\xx$ is suppressed in the remainder of this section.

Following af Klinteberg \& Barnett \cite{AFKLINTEBERG2021}, we define
\begin{equation}
    H(\theta) \coloneqq \frac{1}{\sqrt{a(\theta)}}\Lsq(\theta,\xx)\Fcal(\theta,p,\xx)\frac{|\theta-\thetarootlambda|^{2p-1}}{\left(\Rlambda(\theta,\xx)\right)^{p-1/2}}
    \label{eq:H}
\end{equation}
which allows the integral \eqref{eq:layer_potential_abc} over $[\tha,\thb]$ to be written as
\begin{equation}
u(\xx) = \thsc^{2(1-p)}\int_{-1}^{1} \frac{H(\theta(t,\tha,\thb))}{|t-t_0^\lambda|^{2p-1}}\dt,
\label{eq:Hintegral}
\end{equation}
where $|\theta-\thetarootlambda|=\thsc|t-\trootlambda|$. The singularity swap has transformed the integral on the curve in the polar direction to a denominator that corresponds to that of a straight line.

The function $H(\theta(t,\tha,\thb))$ is expanded in a real monomial basis on $[-1,1]$,
\begin{equation}
    H(\theta(t,\tha,\thb)) \approx \sum_{k=1}^\nt c_kt^{k-1},
    \label{eq:H_poly}
\end{equation}
where the coefficients $c_k$ are obtained by solving a Vandermonde system using the sampled values \linebreak$H(\theta(t_i,\tha,\thb))$ with $t_i$ being the Gauss--Legendre quadrature nodes. Substituting this expansion into the integral yields
\begin{equation}
    u(\xx) \approx \thsc^{2(1-p)}\sum_{k=1}^\nt c_k\nu_k^p(\trootlambda)
    \label{eq:layer_potential_ssq}
\end{equation}
where
\begin{equation}
    \nu_k^p(\trootlambda) = \int_{-1}^1\frac{t^{k-1}}{|t-\trootlambda|^{2p-1}}\dt,\quad k=1,\dots,\nt.
\end{equation}
These basis integrals are evaluated efficiently using the recurrence relations stated in Lemma \ref{lem:nu_rec}, with proof and initial values given in Appendix \ref{a:proofs}.

\begin{lemma}\label{lem:nu_rec}Let $k\in\mathbb{N}$, $p=\overline{p}+3/2$, $\overline{p}\in\mathbb{Z}^+$, and $\trootlambda=t_r+it_i$, where $t_r,t_i\in\mathbb{R}$ and $\trootlambda\notin[-1,1]$. Then, the integrals
\begin{equation}
    \nu_k^p(\trootlambda) = \int_{-1}^1\frac{t^{k-1}}{|t-\trootlambda|^{2p-1}}\dt
\end{equation}
can be expressed as
\begin{equation}
    \nu_k^p(\trootlambda) =
    \begin{cases}
        \dfrac{1-(-1)^{k-2}}{k-2} + 2t_r\nu_{k-1}^p(\trootlambda) - |\trootlambda|^2\nu_{k-2}^p,& p=3/2\text{ and } k\geq 3,\\
        \nu_{k-2}^{p-1}(\trootlambda) + 2t_r\nu_{k-1}^p(\trootlambda) - |\trootlambda|^2\nu_{k-2}^p(\trootlambda),& p>3/2\text{ and } k\geq 3.
    \end{cases}
    \label{eq:nu_rec}
\end{equation}
Moreover, if $t_r=0$, then $\nu_k^p(\trootlambda)=0$ for $k\in 2\mathbb{N}$. The expressions for the initial values for $\nu_k^p(\trootlambda)$ for $p=3/2,~5/2$ are found in \eqref{eq:nu_rec_initial_value_1}-\eqref{eq:nu_rec_initial_value_4} in Appendix \ref{a:proofs}.
\end{lemma}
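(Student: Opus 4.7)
The plan is to derive both branches of the recurrence from a single algebraic identity based on polynomial division, and then to reduce the initial-value and parity claims to elementary calculus. The essential observation is that, for real $t$, the denominator is a polynomial:
\begin{equation*}
  |t-\trootlambda|^2 = (t-t_r)^2 + t_i^2 = t^2 - 2t_r t + |\trootlambda|^2,
\end{equation*}
and since $p=\overline{p}+3/2$ makes $2p-1 = 2\overline{p}+2$ an even positive integer, $|t-\trootlambda|^{2p-1}$ is itself a real polynomial in $t$ with no zeros on $[-1,1]$ (because $\trootlambda\notin[-1,1]$). This ensures the integrands are smooth on the domain, so I can freely split sums under the integral.

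First, for $k\geq 3$, I would multiply and divide to get the identity
\begin{equation*}
  t^{k-1} \;=\; t^{k-3}\bigl(t^2 - 2t_r t + |\trootlambda|^2\bigr) + 2t_r t^{k-2} - |\trootlambda|^2 t^{k-3} \;=\; t^{k-3}|t-\trootlambda|^2 + 2t_r t^{k-2} - |\trootlambda|^2 t^{k-3}.
\end{equation*}
Dividing through by $|t-\trootlambda|^{2p-1}$ and integrating over $[-1,1]$ produces
\begin{equation*}
  \nu_k^p(\trootlambda) \;=\; \int_{-1}^1\frac{t^{k-3}}{|t-\trootlambda|^{2p-3}}\dt + 2t_r\,\nu_{k-1}^p(\trootlambda) - |\trootlambda|^2\,\nu_{k-2}^p(\trootlambda).
\end{equation*}
The two branches in \eqref{eq:nu_rec} then emerge from the first term: if $p>3/2$, the exponent $2p-3 = 2(p-1)-1$ gives exactly the denominator of $\nu_{k-2}^{p-1}$; if $p=3/2$, then $2p-3=0$ and the integrand reduces to $t^{k-3}$, whose primitive on $[-1,1]$ yields $(1-(-1)^{k-2})/(k-2)$ for $k\neq 2$.

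For the parity claim, when $t_r=0$ the denominator $(t^2+t_i^2)^{\overline{p}+1}$ is an even function of $t$, so the integrand inherits the parity of $t^{k-1}$. For even $k$, $t^{k-1}$ is odd, so the integral vanishes. For the initial values $\nu_1^p$ and $\nu_2^p$ at $p=3/2$ and $p=5/2$, I would compute the primitives directly: the factor $1/|t-\trootlambda|^{2p-1}$ is a rational function in $t$ with only the complex conjugate poles $\trootlambda,\overline{\trootlambda}$, so antiderivatives are obtained from a partial-fraction expansion in $\{(t-t_r)^2+t_i^2\}^{-j}$, $j=1,\dots,\overline{p}+1$, yielding closed-form expressions in terms of $\arctan\bigl((t-t_r)/t_i\bigr)$ and rational functions of $t$ evaluated at the endpoints $t=\pm 1$. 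These computations are routine and are placed in Appendix \ref{a:proofs}, yielding \eqref{eq:nu_rec_initial_value_1}-\eqref{eq:nu_rec_initial_value_4}.

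The approach poses no significant obstacle; the recurrence is a one-line polynomial identity, and the only tedium is verifying the closed forms of the initial integrals at $p=3/2$ and $p=5/2$. A minor point worth being explicit about is the degenerate case $k=2$ in the $p=3/2$ branch, where the integral $\int_{-1}^1 t^{-1}\dt$ would diverge; this case is excluded from the recurrence and handled through the initial value $\nu_2^{3/2}$ given in the appendix.
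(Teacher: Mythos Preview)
Your proposal is correct and follows essentially the same approach as the paper: both proofs rest on the polynomial identity $t^{k-1} = t^{k-3}\bigl((t-t_r)^2+t_i^2\bigr) + 2t_r t^{k-2} - |\trootlambda|^2 t^{k-3}$, divide by the denominator, integrate term by term, and handle the parity claim via odd symmetry. Your presentation is slightly more unified in deriving both branches at once before specializing, and you propose partial fractions for the initial values where the paper simply defers to Mathematica, but the substance is the same.
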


The accuracy of this quadrature depends on how well $H$ is resolved by polynomial interpolation on the current subinterval. While the singularity swap removes the dominant algebraic singularity associated with $(\Rlambda)^{-(p-1/2)}$, the analytic continuation of $H$ remains limited by branch-point and logarithmic singularities inherited from $\Lsq$ and $\Fcal$. Consequently, a single global polynomial interpolant of $H$ over $[0,\pi]$ is generally inefficient and may converge slowly when the complex singularities lie close to the real interval.

Rather than increasing the polynomial degree, we subdivide the interval $[0,\pi]$ in the polar variable. This subdivision is introduced for efficiency: on each subinterval, the nearest complex roots, when mapped to $[-1,1]$, are sufficiently far from the interval so that $H$ can locally be well approximated by a low-degree polynomial. This enlarges the effective domain of analyticity on each panel and enables accurate polynomial interpolation using a fixed, modest number of Gauss--Legendre nodes (typically 16 or 32). The same subdivision strategy is employed for $p=1/2$, where standard Gauss--Legendre quadrature is used instead of SSQ. Section \ref{ss:local_refinement} describes how this discretization refinement is realized adaptively.

\begin{remark}[Stability and accuracy of the recurrence formulas $\nu_k^p$]\label{rem:nu_rec_stability}When $|\trootlambda|>1$, which can occur when the target point $\xx$ is far from the Gauss--Legendre panel under consideration, errors due to finite precision can be greatly amplified in the forward recurrences \eqref{eq:nu_rec}. Additional loss of accuracy can occur when $\trootlambda$ lies in one of the two cones extending outwards from the endpoints $\pm 1$, as previously noted in \cite{AFKLINTEBERG2021}.

However, such situations are avoided by construction in the adaptive refinement scheme described in Section \ref{ss:local_refinement}. In practice, for $|\trootlambda|\leq1.1$ and moderate values of $n_t$, the forward recurrence errors typically remain small. When $|\trootlambda|>1.1$, the recurrences are instead run backward, assuming that the final two terms can be computed accurately via standard quadrature. An error predictor from \cite{AFKLINTEBERG2022} is used to determine this; if the predicted error is large, the forward recurrence is retained.
\end{remark}

\subsection{Adjoint method for target-specific quadrature weights}\label{ss:adjoint_method}
The evaluation of the azimuthal integral in \eqref{eq:s3q_Iphi} and the polar integral in \eqref{eq:s3q_Itheta} using SSQ relies on the corresponding basis coefficients, namely Fourier coefficients in the azimuthal direction and monomial coefficients in the polar direction. For a fixed target point, these coefficients are determined from samples of a smooth numerator function that includes the kernel $k(\xx,\yy)$ and the layer density $\sigma(\yy)$. In many applications, however, the same target and kernel must be evaluated for several different densities, for instance in iterative boundary integral solvers. In such cases, it is advantageous to precompute target-specific quadrature weights that act directly on the sampled density. We refer to this as the \textit{adjoint method}. Its practical construction in the monomial and Fourier settings is described in \cite[Section 2.2.2]{AFKLINTEBERG2021} and \cite[Section 2.4]{afKlinteberg2024}, respectively.

\subsection{Local refinement scheme}\label{ss:local_refinement}
To efficiently represent the function $H$ in \eqref{eq:H} using a monomial basis, we adopt a local refinement strategy in the polar direction based on panel subdivision. The refinement is driven by how well the function $H$ can be resolved by fixed-order polynomial interpolation, recognizing that its limited regularity is inherited from the branch-point and logarithmic singularities of $\Lsq$ and $\Fcal$, and that these difficulties are localized in the polar variable, as illustrated in Figure \ref{fig:cheb_value_coeffs}.

Let $P\in[-1,1]$ denote the base Gauss--Legendre panel on which both the geometry and layer density are well resolved, and let $\xx$ be a target point with corresponding complex root $\trootlambda$ of $\Rlambda$. We subdivide $P$ into $\npan$ subpanels $[t_a^i,t_b^i]$, $i=1,\dots,\npan$, each equipped with $\nGL$-point Gauss--Legendre nodes. The subdivision is organized as a binary tree: each panel may be bisected into two child panels of equal length, and refinement proceeds recursively.

The refinement is designed to satisfy
\begin{equation}
    \E(\nGL,t_a^i,t_b^i,p,\sigma,\xx) \leq \eps_i,\qquad i=1,\dots,\npan,
    \label{eq:dt_criteria_i}
\end{equation}
where $\E(\nGL, t_\alpha, t_\beta, p, \sigma, \xx)$ denotes the error predictor for a general $\nGL$-point Gauss--Legendre panel on $[t_a, t_b]$, to be discussed in Section \ref{s:error_estimates}, and where the local tolerances satisfy $\sum_{i=1}^{\npan} \eps_i = \eps$, with $\eps$ being the prescribed global error tolerance. Panels for which the error predictor exceeds the assigned tolerance are bisected, and the process continues until the criterion is satisfied on all leaf panels.

This binary-tree structure has important practical advantages. Since all panels at a given refinement level are obtained by repeated bisection of the reference interval $[-1,1]$, interpolation matrices for transferring surface quantities from parent panels to child panels can be precomputed up to a fixed depth. For deeper levels, refinement is handled dynamically using precomputed local interpolation matrices for left and right subpanels. This approach avoids repeated construction of interpolation operators and leads to an efficient and robust implementation.

The output of the algorithm is a subdivision of $[-1,1]$ into $\npan$ $\nGL$-point Gauss--Legendre panels that allow the reduced polar integral \eqref{eq:layer_potential_abc} to be evaluated with accuracy $\eps$. The layer density is interpolated from the quadrature nodes on the base panel $P$ to the nodes on each subpanel using barycentric Lagrange interpolation \cite{barycentric}. On each subpanel, the contribution is then evaluated by Gauss--Legendre quadrature when this is sufficient, and otherwise by the polar SSQ formula \eqref{eq:Hintegral}. The subpanel contributions are finally summed.

To complete the description of the refinement strategy it remains to concretize the error predictor in \eqref{eq:dt_criteria_i}. This is the subject of the following section.

\begin{figure}[!t]
\centering
\begin{tikzpicture}[scale=1]
\draw[thick,|-|] (-5,0) arc[start angle=120,end angle=80,radius=10] pic[pos=0,sloped]{perp_blue} pic[pos=1,sloped]{perp_blue} pic[pos=.25,sloped]{perp_red} pic[pos=.5,sloped]{perp_red} pic[pos=.625,sloped]{perp_red} pic[pos=.6875,sloped]{perp_red} pic[pos=.75,sloped]{perp_red};
\draw[fill, black] (-0.1,1) circle (2pt) node[yshift=-11] {$\mathbf{x}$} node[yshift=25] {};
\node[text width=1cm] at (-3.9,0.8) {$P$};
\end{tikzpicture}
\caption{Illustration of an adaptive panel subdivision.}
\label{fig:local_refinement}
\end{figure}

\section{Error analysis and practical predictors}
\label{s:error_estimates}
The goal of this section is to develop computable quadrature and interpolation error predictors that are sufficiently reliable to drive adaptive panel refinement in the polar direction, rather than to derive sharp or asymptotically exact bounds. Our starting point is a complex-analytic framework, reviewed in Section \ref{ss:quaderr_complex_analysis}, in which the quadrature and interpolation errors admit exact contour-integral representations. Building on this framework and prior analyses \cite{SORGENTONE2023, AFKLINTEBERG2022}, we obtain practical predictor formulas by introducing asymptotic simplifications and retaining only the nearest complex singularities. These predictors, derived in Sections \ref{ss:error_estimates_quad}--\ref{ss:interp_error_estimates}, form the basis of the adaptive panel subdivision strategy described in Section \ref{ss:local_refinement}.

\subsection{Quadrature and interpolation error formulas using complex analysis}\label{ss:quaderr_complex_analysis}
Let $g$ be a function defined on a real interval $E\subset\mathbb{R}$, and consider the integral
\begin{equation}
    \I[g] = \int_E g(t)\dt,
    \label{eq:Ig}
\end{equation}
approximated by an $n$-point quadrature rule with quadrature nodes $\{t_\ell\}_{\ell=1}^n$ and corresponding weights $\{w_\ell\}_{\ell=1}^n$,
\begin{equation}
    \Q_n[g] = \sum_{\ell=1}^n g(t_\ell)w_\ell.
    \label{eq:Qng}
\end{equation}
The quadrature error is then given by
\begin{equation}
    \E_n[g] = \I[g] - \Q_n[g].
    \label{eq:E}
\end{equation}
The rate at which $\E_n[g]$ decay as $n$ increases depends on the smoothness of $g$ and the chosen quadrature rule.

Classical estimates of $\E_n[g]$ for Gauss--Legendre quadrature typically involve high-order derivatives of $g$. Such estimates are therefore only practical for very smooth integrands and tend to overestimate the error when $g$ has singularities near $E$. In nearly singular settings, they may even incorrectly suggest an increase in error as $n$ grows, despite the actual error decreasing \cite{Klinteberg2017}.

A more informative approach in this regime is based on complex analysis, as described by Donaldson \& Elliot \cite{DONALDSON1972}. They express the error as a contour integral in the complex plane,
\begin{equation}
    \E_n[g] = \frac{1}{2\pi i} \int_C g(t)k_n(t)\dt,
    \label{eq:Ecomplex}
\end{equation}
where the contour $C$ in Figure \ref{fig:contour}, containing the integration interval $E$, is chosen so that the complex continuation of $g$ is analytic on and inside $C$. The error (or remainder) function $k_n(t)$ depends on the quadrature rule and is detailed in Section \ref{ss:gauss_legendre} for the Gauss--Legendre quadrature rule.

A central technique in deriving asymptotic estimates of \eqref{eq:Ecomplex}, introduced in \cite{ELLIOT2008}, involves deforming the contour $C$ away from $E$ while avoiding all singularities. Assuming the integrand tends to zero faster than $|t|^{-1}$ as $|t|\rightarrow\infty$, contributions from parts of $C$ far from $E$ vanish. Thus, $C$ can be extended to infinity, leaving only the contribution from the singularities and their branch cuts. The particular choice of branch cuts in \cite{ELLIOT2008} is essential for obtaining accurate, simplified analytical asymptotic estimates. For functions with multiple pairs of complex conjugate singularities, such as \eqref{eq:generic_layer_potential}, all pairs contribute in principle, but typically only the pair closest to $E$ has a significant contribution, and the others can safely be discarded \cite{AFKLINTEBERG2022,AFKLINTEBERG2021}. \footnote{For Gauss--Legendre quadrature, singularities farther from $E=[-1,1]$ are exponentially suppressed: if $g$ extends analytically to a Bernstein ellipse of radius $\varrho$, then $\E_n[g]=\mathcal{O}(\varrho^{-2n})$ \cite{trefethen2019}. For example, with $n=16$ and a singularity at $z=0.5i$, a second singularity at $2z$ contributes about $10^{-6}$ times as much to the error. If two singularities are located at comparable distances from $E$ and only one is retained in the predictor, the neglected contribution is typically of the same order of magnitude, resulting in a difference by at most a modest constant factor.} The functions considered in this work exhibit branch-point singularities, meaning the main error contribution comes from neighborhoods of these branch points and their branch cuts, illustrated in Figure \ref{fig:contour}.

\begin{figure}[!t]
    \centering
    \begin{tikzpicture}[scale=1.15]

\tikzmath{
    \Cx = 1; \Cy = 0.4; \RR = .15;
    %
    \ca1 = 42; 
    \ca2 = 105; 
    %
    \ea1 = 30; \succea1 = \ea1 + 1;
    \ea2 = 50; \succea2 = \ea2 + 1;
    \ea3 = 57; \succea3 = \ea3 + 1;
    \ea4 = 75; \succea4 = \ea4 + 1;
    \ea5 = 135; \succea5 = \ea5 + 1;
    \ea6 = 360-\ea5; \succea6 = \ea6 + 1;
    \ea7 = 360-\ea4; \succea7 = \ea7 + 1;
    \ea8 = 360-\ea3; \succea8 = \ea6 + 1;
    \ea9 = 360-\ea2; \succea9 = \ea7 + 1;
    \ea0 = 360-\ea1; \succea0 = \ea0 + 1;
}

\draw[thick,->] (-3.5,0) -- (3.5,0);
\draw[thick,->] (0,-2.5) -- (0,2.5);


\foreach \a in {0, ..., 360} {
    \tikzmath {
        \x = 3*cos(\a);
        \y = 2*sin(\a);
    }
    \coordinate (P\a) at (\x,\y);
}


\foreach \a in {0, ..., 360} {
    \tikzmath {
        \x = \Cx + \RR*cos(\a);
        \y = \Cy + \RR*sin(\a);
    }
    \coordinate (Q\a) at (\x,\y);
}

\foreach \a in {0, ..., 360} {
    \tikzmath {
        \x = \Cx + \RR*cos(\a);
        \y = -\Cy - \RR*sin(\a);
    }
    \coordinate (R\a) at (\x,\y);
}

\draw[fill, magenta] (\Cx,\Cy) circle (0.05) node[left, xshift=-5pt] {$t_0$};

\coordinate (ext1) at ($(P53)+(.5,.4)$);

\coordinate (CC1) at 
($(\Cx,\Cy)!.3!(ext1)+(-.2,.2)$);

\coordinate (CC2) at 
($(\Cx,\Cy)!.6!(ext1)+(-.2,.2)$);

\draw[thick, magenta] 
(\Cx,\Cy) .. controls (CC1) and (CC2) .. (ext1);

\draw[magenta] (ext1) node[right] {$B(t_0)$};

\draw[fill, magenta] (\Cx,-\Cy) circle (0.05)
node[left, xshift=-5pt] {$\overline{t_0}$};

\coordinate (ext2) at ($(P307)+(.5,-.4)$);

\coordinate (CC3) at 
($(\Cx,-\Cy)!.3!(ext2)+(-.2,-.2)$);

\coordinate (CC4) at 
($(\Cx,-\Cy)!.6!(ext2)+(-.2,-.2)$);

\draw[magenta] (ext2) node[right] {$B(\overline{t_0})$};

\draw[thick, magenta] 
(\Cx,-\Cy) .. controls (CC3) and (CC4) .. (ext2);


\draw[blue] (P0)
\foreach \a in {1, ..., \ea1} {
    -- (P\a)
};

\draw[thick,blue, ->] (P\ea1) -- (P\succea1);

\draw[blue] (P\ea1)
\foreach \a in {\succea1, ..., \ea2} {
    -- (P\a)
};

\coordinate (C1) at ($(P\ea2)!.3!(Q\ca1)+(-.1,.1)$);

\coordinate (C2) at ($(P\ea2)!.6!(Q\ca1)+(-.1,.1)$);

\tikzmath{
    \precea2 = \ea2 -1;
    \precca1 = \ca1 - 1;
}

\draw[blue] (P\precea2) -- (P\ea2) 
.. controls (C1) and (C2) .. (Q\ca1) -- (Q\precca1);

\draw[thick, blue, -{>[scale=.8]}] ($(C2)+(.05,0)$) -- +(-.02,-.03) node[right, yshift=-4] {$C_1^-$};

\draw[blue] (Q\ca1)
\foreach \a in 
{\ca1, ..., 0, 360, 359, ..., \ca2} {
    -- (Q\a)
};

\coordinate (C3) at ($(Q\ca2)!.3!(P\ea3)+(-.1,.1)$);

\coordinate (C4) at ($(Q\ca2)!.6!(P\ea3)+(-.1,.1)$);

\tikzmath{
    \succea3 = \ea3 + 1;
    \succca2 = \ca2 + 1;
}

\draw[blue] (Q\succca2) -- (Q\ca2) 
.. controls (C3) and (C4) .. (P\ea3)
-- (P\succea3);

\draw[thick, blue, -{>[scale=.8]}] ($(C3)+(.05,0)$) -- +(.02,.03) node[left, xshift = .5, yshift=.4] {$C_1^+$};

\draw[blue] (P\ea3)
\foreach \a in {\ea3, ..., \ea4} {
    -- (P\a)
};

\draw[thick,blue, ->] (P\ea4) -- (P\succea4);

\draw[blue] (P\ea4)
\foreach \a in {\ea4, ..., \ea5} {
    -- (P\a)
};

\draw[thick, blue, ->] (P\ea5) -- (P\succea5) 
node [above left, xshift=2pt] {$C$};

\draw[blue] (P\ea5)
\foreach \a in {\ea5, ..., \ea6} {
    -- (P\a)
};

\draw[thick,blue, ->] (P\ea6) -- (P\succea6);

\draw[blue] (P\ea6)
\foreach \a in {\ea6, ..., \ea7} {
    -- (P\a)
};

\draw[thick,blue, ->] (P\ea7) -- (P\succea7);

\coordinate (C5) at 
($(R\ca2)!.3!(P\ea8)+(-.1,-.1)$);

\coordinate (C6) at ($(R\ca2)!.6!(P\ea8)+(-.1,-.1)$);

\tikzmath{
    \precea8 = \ea8 - 1;
    \succca2 = \ca2 + 1;
}

\draw[blue] (R\succca2) -- (R\ca2)
.. controls (C5) and (C6) .. (P\ea8)
-- (P\precea8);

\draw[thick, blue, -{>[scale=.8]}] 
($(C5)+(.05,0)$) -- +(-.02,+.03) node[left, xshift = .5, yshift=-.4] {$C_2^-$};

\draw[blue] (R\ca1)
\foreach \a in 
{\ca1, ..., 0, 360, 359, ..., \ca2} {
    -- (R\a)
};

\coordinate (C7) at ($(P\ea9)!.3!(R\ca1)+(-.1,-.1)$);

\coordinate (C8) at
($(P\ea9)!.6!(R\ca1)+(-.1,-.1)$);

\tikzmath{
    \succea9 = \ea9 + 1;
    \precca1 = \ca1 - 1;
}

\draw[blue] (P\succea9) -- (P\ea9) 
.. controls (C7) and (C8) .. (R\ca1)
-- (R\precca1);

\draw[thick, blue, -{>[scale=.8]}] ($(C8)+(.05,0)$) -- +(.02,-.03) node[right, yshift=4] {$C_2^+$};

\draw[blue] (P\ea7)
\foreach \a in {\ea7, ..., \ea8} {
    -- (P\a)
};

\draw[blue] (P\ea9)
\foreach \a in {\ea9, ..., \ea0} {
    -- (P\a)
};

\draw[thick,blue, ->] (P\ea0) -- (P\succea0);

\draw[blue] (P\ea0)
\foreach \a in {\ea0, ..., 360} {
    -- (P\a)
};

\draw[thick, red] (-2,0)  -- (2,0);

\draw[fill, red] (-2,0) circle (2pt);
\draw[fill, red] (2,0) circle (2pt);

\draw[red] (-1,0) node[above] {$E$};

\draw[fill, black] (-0.5,0) circle (1.5pt);
\draw[black] (-0.5,-0.06) node[below] {$\tau$};

\end{tikzpicture}
    \caption{Sketch of the contour $C$, the base interval $E$, interpolation point $\tau$, and the deformations $C_1=C_1^-\cup C_1^+$ and $C_2=C_2^-\cup C_2^+$ circumventing the singularities $\troot$ and $\overline{\troot}$ with branch cuts $B(\troot)$ and $B(\overline{\troot})$, respectively.}
    \label{fig:contour}
\end{figure}

The same complex analysis framework applies also to polynomial interpolation. Given $n$ distinct sample points $\{t_k\}_{k=1}^n\subset E$, then there exists a unique polynomial $p_{n-1}$ of degree at most $n-1$ such that $p(t_k)=g(t_k)$, $k=1,\dots,n$. This polynomial can be represented in various ways, but due to its superior stability and efficiency, we construct it using the barycentric Lagrange interpolation formula \cite{barycentric,trefethen2019}
\begin{equation}
    p_{n-1}(t) = \elln(t)\sum_{k=1}^n\frac{w_kg(t_k)}{t-t_k},
\end{equation}
where the barycentric weights are
\begin{equation}
    w_k = \frac{1}{\elln^{\prime}(t_k)} = \left(\prod_{j=1,~j\neq k}^n(t_k-t_j)\right)^{-1},\quad k=1,\dots,n
    \label{eq:barycentric_weights}
\end{equation}
and the node polynomial is
\begin{equation}
    \elln(t) = \prod_{k=1}^n (t-t_k).
    \label{eq:elln}
\end{equation}
The polynomial interpolation error at $\tau\in E$,
\begin{equation}
    \EI_n[g](\tau) = g(\tau)-p_{n-1}(\tau),
\end{equation}
admits, by the Hermite interpolation formula \cite[Theorem 11.1]{trefethen2019}, the contour representation
\begin{equation}
    \EI_n[g](\tau) = \frac{1}{2\pi i} \int_C \frac{\elln(\tau)}{\elln(t)}\frac{g(t)}{(t-\tau)}\dt.
    \label{eq:Einterp_complex}
\end{equation}
This formula holds under the same assumptions as stated below \eqref{eq:Ecomplex}, namely that $g$ is analytic on and inside $C$, and that $E$ is contained within $C$. We will be concerned with the maximum absolute interpolation error over $E$,
\begin{equation}
    \left|\EI_n[g]\right| = \max_{\tau\in E}~\left|\EI_n[g](\tau)\right|.
    \label{eq:Einterp_complex_max}
\end{equation}
Comparing \eqref{eq:Einterp_complex} with \eqref{eq:Ecomplex}, we see that, the error function $k_n(t)$ is replaced in the interpolation case by the factor $\elln(\tau)/(\elln(t)(t-\tau))$, which depends on the node distribution and evaluation point $\tau$. 
Crucially, both factors share the property that their magnitude decays rapidly away from the real interval $E$.

\subsection{Quadrature error predictors for adaptivity in polar direction}\label{ss:error_estimates_quad}
We now construct a quadrature error predictor for the function $\Lsq$, defined in \eqref{eq:Lsq}, repeated here for convenience,
\begin{equation}
    \Lsq(\theta,\xx) = \left(\lamb+\sqrt{\lamb^2-\rho^2\sin^2(\theta)}\right)^{-1/2},
    \label{eq:Lsq_est}
\end{equation}
with $\lambda$ defined in \eqref{eq:lambda}.
This predictor is used to guide adaptive subdivision of the polar interval in the case $p=1/2$, where no singularity-swap formulation is applied. 

We begin by deriving a predictor applicable to a general $n$-point quadrature rule, expressed in terms of its error function $k_n(t)$, and then specialize the construction to the Gauss–Legendre rule. The resulting predictor is summarized in \textit{Error predictor \ref{est:B_quad}}. 

\subsubsection{General results}\label{ss:quad_general_results}
Consider the integral
\begin{equation}
    \int_{\tha}^{\thb} \Lsq(\theta,\xx)\dtheta = \int_E \Lsq(\theta(t,\tha,\thb),\xx)\thsc\dt,
\label{eq:Bintegral}
\end{equation}
where $E=[-1,1]$, $\theta(t,\tha,\thb)$ is the linear map defined in \eqref{eq:t2theta_map}, and $\thsc$ is the associated scaling factor. We denote the quadrature error of this integral, evaluated with an $n$-point rule by $\E[\Lsq](\xx,n,\tha,\thb)$.
Let $\{\trootlambda,\overline{\trootlambda}\}$ be the pair of branch points of the integrand closest to $E$, defined as the roots of $\Rlambda(\theta(t,\tha,\thb),\xx)$ in \eqref{eq:Rlambda}. For notational convenience, we write $\theta(t,\tha,\thb)=\thetat$ and denote these roots by $\{\troot,\overline{\troot}\}$ for the remainder of Section \ref{s:error_estimates}.

From the contour representation reviewed in Section \ref{ss:quaderr_complex_analysis}, the quadrature error admits an exact contour integral expression. By deforming the contour away from the real interval and retaining only the contributions associated with the nearest branch points, the error is modeled by the sum of two branch-cut integrals,
\begin{equation}
    \E[\Lsq](\xx,n,\tha,\thb) \approx \frac{1}{2\pi i}\int_{C_1}\Lsq(\thetat,\xx)k_n(t)\thsc\dt + \frac{1}{2\pi i}\int_{C_2}\Lsq(\thetat,\xx)k_n(t)\thsc\dt\eqqcolon \E_1+\E_2,
\end{equation}
where $C_1$ and $C_2$ are the curves going around $\troot$ and $\overline{\troot}$, respectively (see Figure \ref{fig:contour}).

We focus on the contribution $\E_1$ from the curve $C_1$.
The integrand contains square-root branch-point singularities at $\troot$ and $\overline{\troot}$. By isolating these factors, $\E_1$ can be written as
\begin{equation}
    \E_1 = \frac{\thsc}{2\pi i}\int_{C_1} \frac{k_n(t)}{\left(\lambda(\thetat)+\Gbar(\thetat)^{1/2}\thsc(t-\troot)^{1/2}(t-\overline{\troot})^{1/2}\right)^{1/2}}\dt,
    \label{eq:Lsq_quad_E1}
\end{equation}
where
\begin{equation}
    \Gbar(\theta) \coloneqq \frac{\lambda(\theta)^2-\rho^2\sin^2(\theta)}{(\theta-\thetaroot)(\theta-\overline{\thetaroot})}.
    \label{eq:Gbar}
\end{equation}
Let $C_1^+$ denote the side of $C_1$ going outwards, from $\troot$ to $\infty$, and let $C_1^-$ denote the other side going in the opposite direction such that $C_1=C_1^-\cup C_1^+$, illustrated in Figure \ref{fig:contour}. Defining the jump across the branch cut as
\begin{equation}
    (t-\troot)\Big.\Big\vert_{C_1^+} = (t-\troot)\Big.\Big\vert_{C_1^-}e^{-2\pi i},
\end{equation}
we obtain
\begin{equation}
    (t-\troot)^{1/2}\Big.\Big\vert_{C_1^+} = -(t-\troot)^{1/2}\Big.\Big\vert_{C_1^-}.
\end{equation}
This allows $\E_1$ to be written as the difference of two integrals: an integral from $t_0$ to $\infty$ with $(t-\troot)^{1/2}\Big.\Big\vert_{C_1^+}$, and another from $\infty$ to $t_0$ with $(t-\troot)^{1/2}\Big.\Big\vert_{C_1^-}$. With the definition
\begin{equation}
    J_{\pm}(\troot,n) \coloneqq \int_{\troot}^\infty \frac{k_n(t)}{\left(\lambda(\thetat)\pm\Gbar(\thetat)^{1/2}\thsc(t-\troot)^{1/2}(t-\overline{\troot})^{1/2}\right)^{1/2}}\dt,
    \label{eq:Jpm}
\end{equation}
we obtain
\begin{equation}
    \E_1 = \frac{\thsc}{2\pi i}\left(J_+(\troot,n) - J_-(\troot,n)\right) \eqqcolon \frac{\thsc}{2\pi i}J(\troot,n)
    \label{eq:J}
\end{equation}
The contribution $\E_2$ from $C_2$ for the branch point $\trootbar$ is computed analogously and satisfies $\E_2=\overline{\E_1}$. Combining the two contributions, discarding the real part, and taking the absolute value, yields the following general quadrature error predictor
\begin{equation}
    \left|\E[\Lsq](\xx,n,\tha,\thb)\right| \approx \frac{\thsc}{\pi}\left|J(\troot,n)\right|.
    \label{eq:general_quad_est_B}
\end{equation}
To render this predictor computationally useful, an efficient and sufficiently accurate way to evaluate the integrals defining $J_\pm$ is needed. We now consider that problem in the case of the Gauss--Legendre rule.

\subsubsection{Gauss--Legendre rule}\label{ss:gauss_legendre}
For Gauss--Legendre quadrature on $E=[-1,1]$, the error function $k_n$ does not have a closed form expression, but was shown in \cite{DONALDSON1972,ELLIOT2008} to satisfy the asymptotic formula
\begin{equation}
    k_n(z) \simeq \frac{2\pi}{\xi(z)^{2n+1}}
    \label{eq:GL_knz}
\end{equation}
as $n\rightarrow\infty$ with $\xi(z) = z + \sqrt{z^2-1}$. Here, and for the remainder of this paper, we shall use the symbol $\simeq$ to mean ``asymptotically equal to''. That is, $a(n)\simeq b(n)$, for large $n$, if $\lim_{n\rightarrow\infty}a(n)/b(n)=1$. Moreover, we will define $\sqrt{z^2-1}$ as $\sqrt{z+1}\sqrt{z-1}$ with $-\pi<\arg(z\pm1)\leq\pi$.

The integrands defining \eqref{eq:Jpm} contain a branch point at $\troot$. Following \cite{ELLIOT2008}, we define a branch cut extending from $\troot$ to $\infty$ that does not intersect $[-1,1]$,
\begin{equation}
    B(\troot) = \left\{\branchparam(s)\in\mathbb{C} : \branchparam(s) = \frac{1}{2}\left(\zeta(s)+\frac{1}{\zeta(s)}\right),\quad 1\leq s<\infty\right\},
    \label{eq:branch_cut}
\end{equation}
where
\begin{equation}
    \zeta(s) = \zeta_0s,\quad \zeta_0 = \troot + \sqrt{\troot^2-1}.
    \label{eq:branch_cut_extra}
\end{equation}
One may verify that, as intended, $\branchparam(1)=\troot$. This particular parametrization leads to the following helpful simplification of $k_n$,
\begin{equation}
    k_n(\branchparam(s))\simeq \frac{2\pi}{\zeta(s)^{2n+1}},
    \label{eq:kn_branch_cut}
\end{equation}
where
\begin{equation}
    \bernsteinr(\troot)=|\zeta_0|
    \label{eq:bernstein_radius}
\end{equation}
is the so-called Bernstein radius of $\troot$. 

Substituting this asymptotic form into the expressions for $J_\pm$, we obtain the following model for the branch-cut contributions
\begin{equation}
    J_\pm(\troot,n) \simeq \frac{2\pi}{\zeta_0^{2n+1}}\int_1^\infty \frac{1}{s^{2n+1}} \frac{\branchparamprime(s)\ds}{\left(\lambda(\theta(\branchparam(s))\pm\Gbar(\theta(\branchparam(s)))^{1/2}\thsc(\branchparam(s)-\troot)^{1/2}(\branchparam(s)-\overline{\troot})^{1/2}\right)^{1/2}}.
\label{eq:Jbranch}
\end{equation}
Inserting this expression into the general predictor derived in the previous subsection leads directly to the Gauss--Legendre quadrature error predictor stated below. 

\begin{errest}[Gauss--Legendre quadrature, $\Lsq$]\label{est:B_quad}Let $\xx=(x,y,z)$ be a target point with $\rho=\sqrt{x^2+y^2}$, and let $\gamma_\rho(\theta)=(a(\theta)\sin(\theta),b(\theta)\cos(\theta))$ parametrize a smooth curve in the $\rho z$-plane over $[\tha,\thb]$.
Denote by $\{\troot,\trootbar\}$ the pair of complex roots of $\Rlambda(\theta(t,\tha,\thb)),\xx)$ in \eqref{eq:Rlambda} closest to $E=[-1,1]$. The functions 
$a(\theta)$ and $b(\theta)$
are introduced in \eqref{eq:gamma_axi}.

The Gauss--Legendre quadrature error for evaluating \eqref{eq:Bintegral} with an $n$-point rule is predicted by
\begin{equation}
    \left|\EQ[\Lsq](\xx,n,\tha,\thb)\right| \approx \alphaQ(2n+1)\betaQ(2n+1),
    \label{eq:Lsq_quad_est}
\end{equation}
where
\begin{equation}
    \alphaQ(m) = \frac{2}{\varrho(t_0)^{m}}\thsc,\qquad \betaQ(m) = \left|\UQ_+(m)-\UQ_-(m)\right|,
    \label{eq:betaQ}
\end{equation}
and
\begin{equation}
    \UQ_\pm(m) = \int_{1}^\infty\frac{1}{s^{m}} \frac{\branchparamprime(s)\ds}{\left(\lambda(\theta(\branchparam(s))\pm\Gbar(\theta(\branchparam(s)))^{1/2}\thsc(\branchparam(s)-\troot)^{1/2}(\branchparam(s)-\overline{\troot})^{1/2}\right)^{1/2}}.
    \label{eq:UQ_pm}
\end{equation}
Here, the functions $\Gbar$, $\branchparam$, and $\varrho$ are defined in \eqref{eq:Gbar}, \eqref{eq:branch_cut}, and \eqref{eq:bernstein_radius}, respectively.
\end{errest}

\subsection{Interpolation error predictor for adaptivity in polar direction}\label{ss:interp_error_estimates}
We now construct an interpolation error predictor for polynomial interpolation of the function $\Lsq$ defined in \eqref{eq:Lsq_est}. As in the quadrature case, the purpose is not to obtain a sharp upper bound, but a reliable efficiently computed quantity suitable for driving adaptive panel refinement. The interpolation is performed on the base interval $E=[-1,1]$, with the linear map $\theta(t,\tha,\thb)$ defined in \eqref{eq:t2theta_map}, using the $n$ Gauss--Legendre nodes.

Starting from the complex contour representation of the interpolation error reviewed in Section \ref{ss:quaderr_complex_analysis}, and using the same notation as in Section \ref{ss:error_estimates_quad}, the pointwise interpolation error at $\tau\in E$ admits the representation
\begin{equation}
    \left|\EI_n[\Lsq](\xx,\tau)\right| \approx \frac{1}{\pi}\left|\elln(\tau)\left(\JI_+(\troot,n)-\JI_-(\troot,n)\right)\right|,
    \label{eq:EI}
\end{equation}
where the branch-cut integrals $\JI_\pm$ are defined as
\begin{equation}
    \JI_\pm(\troot,n) \approx \int_1^\infty\frac{1}{(\branchparam(s)-\tau)\elln(\branchparam(s))}\frac{\branchparamprime(s)\ds}{\left(\lambda(\theta(\branchparam(s))+\Gbar(\theta(\branchparam(s)))^{1/2}\thsc(\branchparam(s)-\troot)^{1/2}(\branchparam(s)-\overline{\troot})^{1/2}\right)^{1/2}}.
    \label{eq:JIbranch}
\end{equation}
The main difference between \eqref{eq:Jbranch} and \eqref{eq:JIbranch} is the factor $1/\elln(\branchparam(s))$, which encodes the dependence on the interpolation nodes. For Gauss--Legendre nodes, this factor can be characterized explicitly.

\begin{lemma}[Asymptotics of the Gauss--Legendre node polynomial]
Let $\{t_k\}_{k=1}^n$ denote the Gauss--Legendre nodes on $[-1,1]$, and let $\ell_n(t) = \prod_{k=1}^n (t-t_k)$ be the node polynomial. Then as $n\rightarrow\infty$, for any complex $t\notin[-1,1]$,
\begin{equation}
    \ell_n(t) \simeq \frac{2^n(n!)^2}{(2n)!(2\pi n)^{1/2}}\frac{1}{(t^2-1)^{1/4}}\left(t+\sqrt{t^2-1}\right)^{n+1/2}.
    \label{eq:node_poly_asymptotic}
\end{equation}
\label{lem:node_poly_asymptotic}
\end{lemma}

\begin{proof}
The Gauss--Legendre nodes are by definition the zeros of the Legendre polynomial $P_n(t)$. Since both $P_n$ and $\ell_n$ are degree-$n$ polynomials with identical zeros, they differ only by a constant factor $\ell_n(t)=c_nP_n(t)$. To determine $c_n$, we compare leading coefficients. The leading coefficient of $P_n$ is $(2n)!/(2^n(n!)^2)$. Since $\ell_n$ is monic, its leading coefficient is $1$. Therefore, $c_n=(2^n(n!)^2)/(2n)!$.

We now invoke Theorem 8.2.1 of \cite{szego}, which states that for complex $t\notin[-1,1]$,
\begin{equation}
    P_n(t) \simeq \frac{1}{(2\pi n)^{1/2}}\frac{1}{(t^2-1)^{1/4}}\left(t+\sqrt{t^2-1}\right)^{n+1/2},
\end{equation}
as $n\rightarrow\infty$. Substituting this into the relation between $\ell_n$ and $P_n$ above yields \eqref{eq:node_poly_asymptotic}.
\end{proof}

Along the branch cut $\branchparam$ \eqref{eq:branch_cut} associated with the singularity $t_0$, the right-most term in \eqref{eq:node_poly_asymptotic} simplifies to $(\zeta_0s)^{n+1/2}$, $1\leq s<\infty$. Thus the interpolation error inherits an exponential factor $|\zeta_0|=\varrho(t_0)^{-(n+1/2)}$, recalling \eqref{eq:bernstein_radius}, in contrast to the quadrature case, where the decay is governed by $\varrho(t_0)^{-(2n+1)}$. 

By combining Lemma \ref{lem:node_poly_asymptotic} with \eqref{eq:JIbranch} and maximizing over $\tau\in\{-1,0,1\}$, we obtain the final interpolation error predictor in \eqref{eq:Lsq_interp_est}. This maximization is supported by numerical observations.

\begin{errest}[Polynomial interpolation at Gauss--Legendre nodes, $\Lsq$]\label{est:Binterp}Consider the function $\Lsq$ defined by \eqref{eq:Lsq_est}. Let all quantities be defined as in \textit{Error predictor \ref{est:B_quad}}. Then the maximum interpolation error over the panel may be predicted by
\begin{equation}
    \left|\EI_n[\Lsq](\xx,n,\tha,\thb)\right| \approx \alphaI(n) \max_{\tau\in\{-1,0,1\}}\betaI(n,\tau),
    \label{eq:Lsq_interp_est}
\end{equation}
where
\begin{equation}
    \alphaI(m) = \frac{1}{\varrho(t_0)^{m+1/2}}\frac{(2m)!(2m)^{1/2}}{2^m(m!)^2\pi^{1/2}},\qquad \betaI(m,\tau) = \left|\ell_m(\tau)\left(\UI_+(m,\tau) - \UI_-(m,\tau)\right)\right|,
    \label{eq:betaI}
\end{equation}
with
\small
\begin{equation}
    \UI_\pm(m,\tau) = \int_{1}^\infty\frac{1}{s^{m+1/2}} \frac{1}{(\branchparam(s)-\tau)}\frac{1}{(\branchparam(s)^2-\tau)^{1/4}}\frac{\branchparamprime(s)\ds}{\left(\lambda(\theta(\branchparam(s))\pm\Gbar(\theta(\branchparam(s)))^{1/2}\thsc(\branchparam(s)-\troot)^{1/2}(\branchparam(s)-\overline{\troot})^{1/2}\right)^{1/2}},
    \label{eq:UI_pm}
\end{equation}\normalsize
where $\ell_m$ is the $m$-point node polynomial defined in \eqref{eq:elln}.
\end{errest}

\subsection{Numerical evaluation of branch-cut integrals}\label{ss:numerical_eval_branch_cut_int}
Both the quadrature and interpolation error predictors reduce to branch-cut integrals $\betaQ$ and $\betaI$ in \eqref{eq:betaQ} and \eqref{eq:betaI}, respectively. These integrals take the form
\begin{equation}
    \int_1^\infty s^{-m}F(s)\ds,
\end{equation}
with $m=2n+1$ in the quadrature case and $m=n+1/2$ in the interpolation case. The factor $s^{-m}$ introduce rapid algebraic decay also for moderate $n$, localizing the integral near $s=1$.

We therefore truncate the interval $[1,\infty)$ to $[1,L]$, where $L$ is chosen such that $s^{-m}=\kappa$ at $s=L$. In all experiments, we set $\kappa=10^{-10}$. The remaining factor $F(s)$ is found to be surprisingly smooth for all target point locations. The truncated integral is therefore efficiently evaluating using Gauss--Legendre quadrature on $[1,L]$, with eighth nodes typically being sufficient for both predictors.

The resulting computational cost is negligible compared to the surface quadrature itself, ensuring that the adaptive refinement remains efficient.

\subsection{Performance of error predictors for adaptivity}
We now assess the performance of the derived quadrature and interpolation error predictors for the function $\Lsq$ in \eqref{eq:Lsq_est}. The aim is to verify that the predictors correctly capture the dependence on target point location, reproduce the expected decay with increasing $n$, and remain reliable in the close-evaluation regime.

Figure \ref{fig:errest_B_quad_ip} compares measured and predicted errors for a fixed value of $n$, for both a convex spheroid and a non-convex ``smooth star'' geometry. The results show that the predictors perform well for both geometries across all target point locations, including those close to the symmetry axis.

\begin{figure}[!t]
\centering
\begin{subfigure}[t]{0.45\textwidth}
\includegraphics[trim={1.2cm 0.0cm 1.1cm 0.6cm},clip,width=0.975\textwidth]{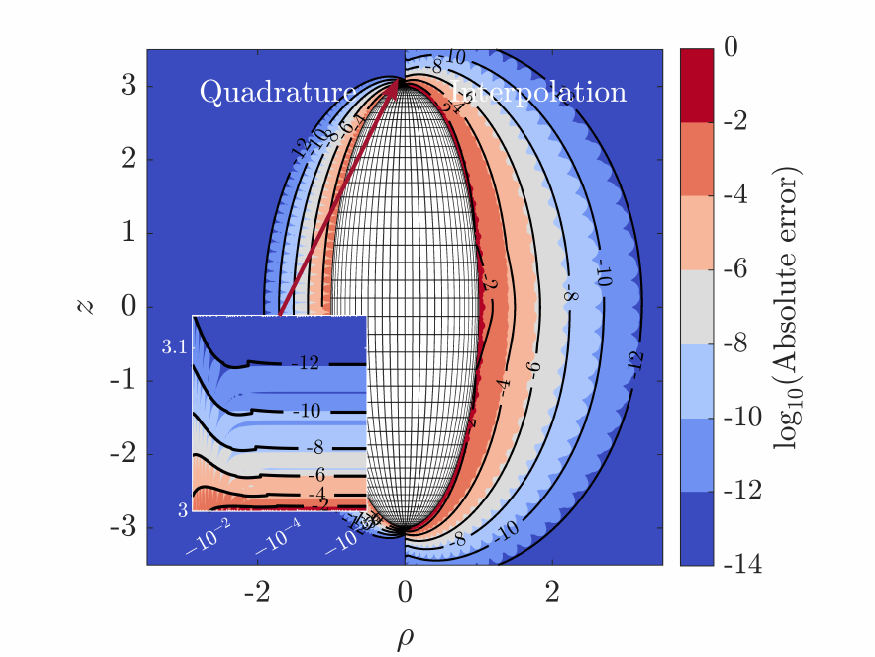}
\caption{}
\label{fig:errest_B_quad_ip_spheroid}
\end{subfigure}
\begin{subfigure}[t]{0.45\textwidth}
\includegraphics[trim={0.9cm 0.0cm 1.1cm 0.6cm},clip,width=\textwidth]{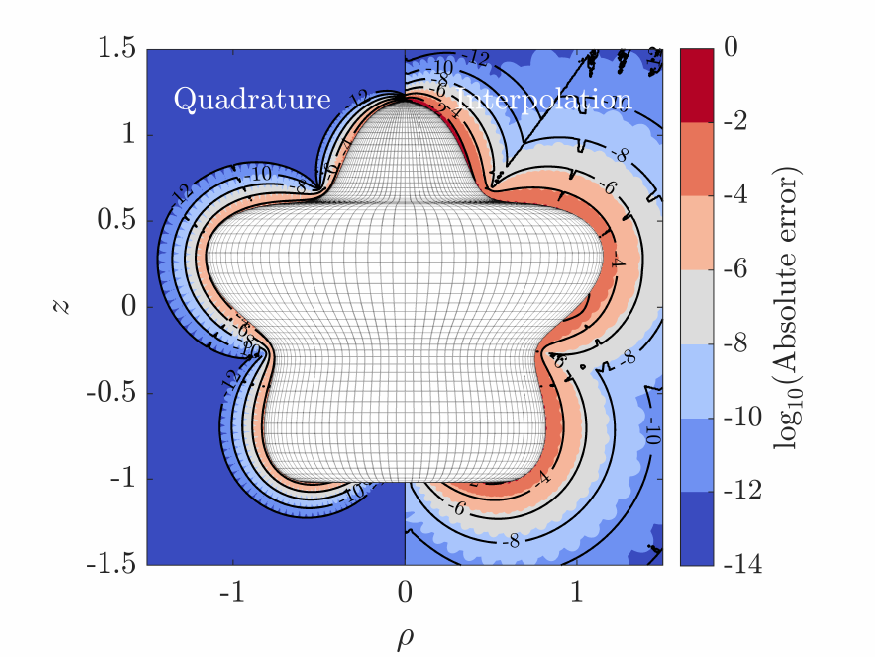}
\caption{}
\label{fig:errest_B_quad_ip_star}
\end{subfigure}
\caption{Quadrature and interpolation error prediction for $\Lsq$ in \eqref{eq:Lsq_est} on $[0,\pi]$, shown in the $\rho z$-plane. The measured errors are shown in color and the predicted levels as black contours, both on a $\log_{10}$ scale.}
\label{fig:errest_B_quad_ip}
\end{figure}

We next examine the decay of the error as $n$ increases. Since Gauss--Legendre error function \eqref{eq:GL_knz} is constant along the level sets of the Bernstein radius function $\bernsteinr$, the quadrature error is approximately constant on these level sets. We therefore construct target points by determining roots $\trootlambda$ such that $\varrho(\trootlambda)=P$, for a fixed constant $P$, and then form the corresponding target points $\xx(\trootlambda)$ following \cite[Section 5.4]{AFKLINTEBERG2022}. These target points are shown in black in Figure \ref{fig:body_targets}.

Figure \ref{fig:maxabserr_vs_nt} presents the maximum absolute error over these targets as a function of $n$. Both quadrature and interpolation errors exhibit exponential decay, with rates consistent with their theoretical dependence on the Bernstein radius. The predictors closely follow this decay, capturing both the rate and magnitude of the true error. 

To assess robustness in the close-evaluation regime, additional target points are placed along three lines normal to the surface, as illustrated in \ref{fig:body_targets}. The lower panels of Figure \ref{fig:errest_B_quad_ip}, with $n=28$ fixed, show the error as a function of target location and distance to the surface. The predictors remain stable across this range and provide reliable approximate upper bounds for both distant and near-surface target points.

\begin{figure}[!t]
\centering
\begin{subfigure}[t]{0.49\textwidth}
\includegraphics[trim={0.2cm 0.7cm 1.3cm 1.5cm},clip,width=\textwidth]{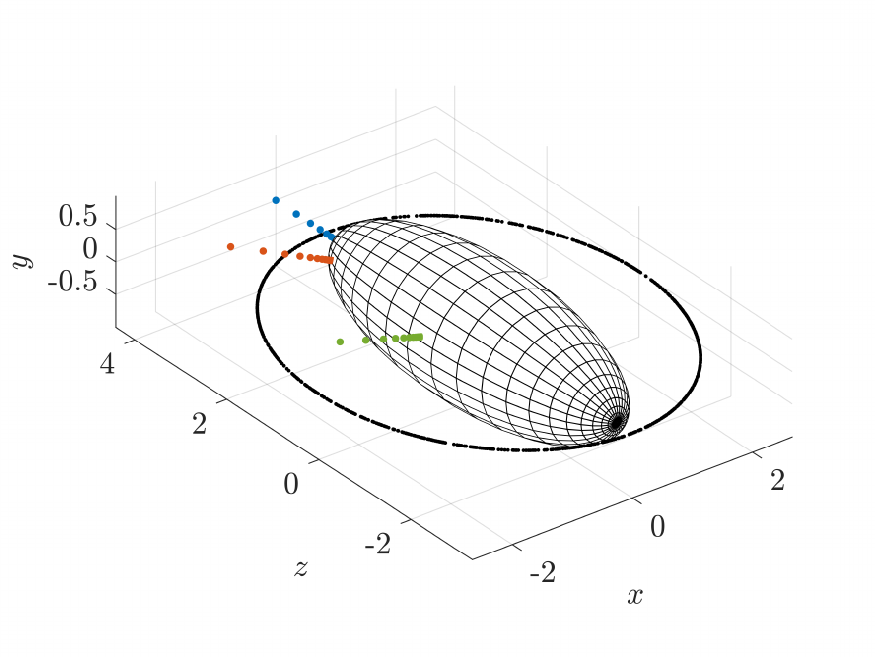}
\caption{Source geometry, 1000 black target points with $\varrho(\trootlambda)=3$ and colored target points along lines normal to the surface with fixed $\varphi=10\pi/11$ and $\theta=\{0,0.6,\pi/2\}$.}
\label{fig:body_targets}
\end{subfigure}
\hfill
\begin{subfigure}[t]{0.49\textwidth}
\includegraphics[width=\textwidth]{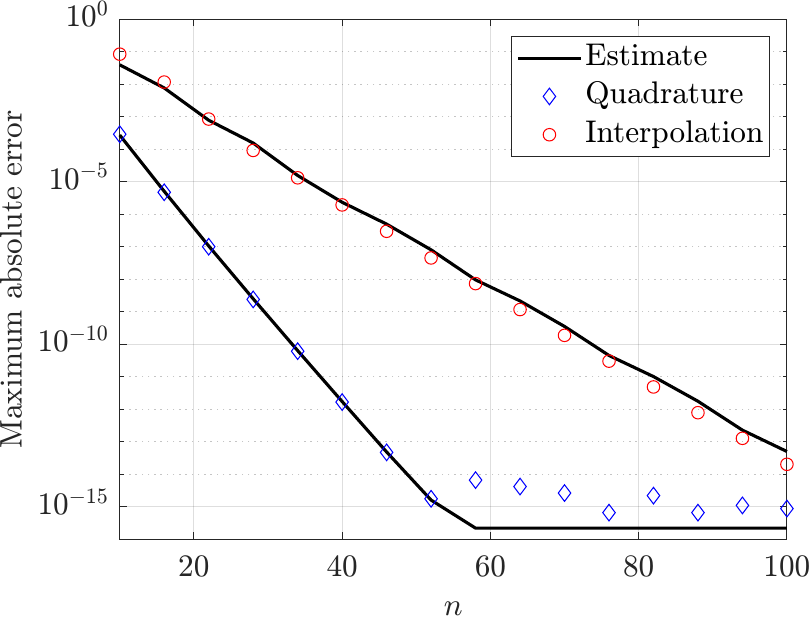}
\caption{Maximum absolute measured and predicated errors over all black target points in (a).}
\label{fig:maxabserr_vs_nt}
\end{subfigure}
\hfill
\begin{subfigure}[t]{0.49\textwidth}
\includegraphics[width=\textwidth]{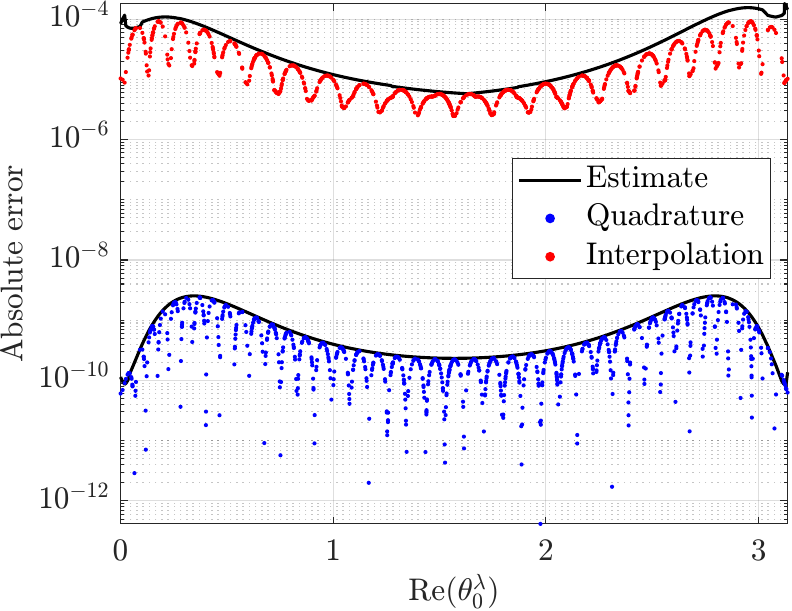}
\caption{Pointwise measured and predicted errors with $n=28$ from (b).}
\label{fig:abserr_vs_real_theta0}
\end{subfigure}
\hfill
\begin{subfigure}[t]{0.49\textwidth}
\includegraphics[width=\textwidth]{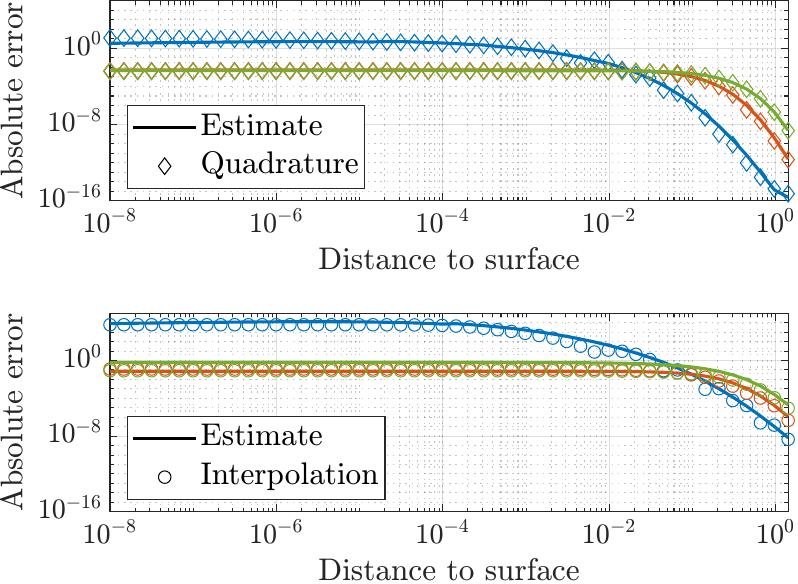}
\caption{Pointwise measured and predicted errors using $n=28$ for the colored target points in (a).}
\label{fig:Lsq_estimates_close}
\end{subfigure}
\caption{Measured and predicted quadrature and interpolation error for the function $\Lsq$ in \eqref{eq:Lsq_est} on $[0,\pi]$.}
\label{fig:B_quad_ip_conv}
\end{figure}

\section{Root finding}
\label{s:root_finding}
The error predictors in Section \ref{s:error_estimates} depend on the location of the nearest complex singularities of the relevant integrands. In our setting, these singularities are characterized by complex roots of squared-distance functions.

For a target point $\xx$, three types of roots arise:
\begin{enumerate}
\item The azimuthal root $\varphi_0(\theta,\xx)$ of the full squared-distance function $R(\theta,\varphi,\xx)^2$ for fixed $\theta$. This root is used in singularity swap quadrature (SSQ) in the azimuthal variable and is also required for evaluating the quadrature error predictor of the layer potential \eqref{eq:parameterized_layer_potential}. For axisymmetric surfaces, $\varphi_0$ is available in closed form and is given in Lemma \ref{lem:phi0}.
\item The polar root $\theta_0(\varphi,\xx)$ of $R(\theta,\varphi,\xx)^2$ for fixed $\varphi$. This root is required to evaluate the quadrature error predictor of the layer potential \eqref{eq:parameterized_layer_potential}, which is used to determine whether special quadrature is needed to achieve the desired accuracy.
\item The polar root $\theta_0^\lambda(\xx)$ of the reduced squared-distance function $R_\lambda(\theta,\xx)^2$ in \eqref{eq:Rlambda}. This root governs the error predictors for $\Lsq$ used to drive adaptivity in the polar direction and is also used in SSQ in the polar variable.
\end{enumerate}

In each case, the relevant root is the one closest to the corresponding integration interval. For the azimuthal root this interval is $[0,2\pi)$, whereas for the polar roots it is either the full interval $[0,\pi]$ or, after adaptive refinement, the current polar subpanel mapped to $[-1,1]$.
This closest-root principle reflects the fact that the contribution of a singularity to the quadrature and interpolation error decays exponentially with its Bernstein radius, which is the mechanism underlying the predictors in Section \ref{s:error_estimates}.

For spheroidal geometries, $\theta_0^\lambda(\xx)$ and $\theta_0(\varphi,\xx)$ can be obtained analytically. We begin by presenting these derivations, followed by a discussion of the general axisymmetric case.

\subsection{Analytical roots for spheroidal geometries}\label{ss:root_finding_analytic}
We consider the spheroid parametrized by
\begin{equation}
    \ggamma(\theta,\varphi) = \left(a\sin(\theta)\cos(\varphi),~a\sin(\theta)\sin(\varphi),~b\cos(\theta)\right),\quad 0\leq\theta\leq\pi,~0\leq\varphi<2\pi,
    \label{eq:gamma_spheroid}
\end{equation}
with constants $a,b>0$. The derivation of the formula in Lemma \ref{lem:theta0_spheroid} follows from reducing the problem to a planar circle and ellipse using a Joukowsky transform. For completeness, these auxiliary results are provided in Appendix \ref{a:roots}.

\begin{lemma}[Root of $\Rlambda$ for a spheroid]\label{lem:theta0_spheroid}Let $\xx=(x,y,z)\in\mathbb{R}^3$ with $\rho=\sqrt{x^2+y^2}>0$, not on the spheroid. Define
\begin{equation}
    \Rlambda(\theta,\xx) = \left(a\sin(\theta)-\rho\right)^2 + \left(b\cos(\theta)-z\right)^2.
\end{equation}
Then $\Rlambda(\theta,\xx)=0$ for $\theta=\thetarootlambda$, where
\begin{equation}
    \thetarootlambda = \atantwo(\ytilde,\xtilde) \pm i\ln\left(\frac{1}{\rho}\left(\lambda+\sqrt{\lambda^2-\rho^2}\right)\right),
    \label{eq:theta0_spheroid}
\end{equation}
with
\begin{equation}
\begin{split}
    \atilde = \frac{a+b}{2},\quad c^2 = \frac{b^2-a^2}{4}, \quad w = z+i\rho, \\
    u = \frac{1}{2}\left(w\pm\sqrt{w^2-4c^2}\right), \quad\xtilde=\Re(u), \quad\ytilde = \Im(u),
\end{split}
\end{equation}
and
\begin{equation}
    \lambda = \frac{1}{2a}\left(\atilde^2+\xtilde^2+\ytilde^2\right).
\end{equation}
Here, $\atantwo$ is defined as in Lemma \ref{lem:phi0}.
\end{lemma}

\begin{proof}
This follows directly from Lemma \ref{lem:root_ellipse} by exchanging the roles of $a$ and $b$, and letting the point be $(z,\rho)$.
\end{proof}

The following result extends the derivation in \cite{SORGENTONE2023}, where the analysis was carried out for the special case of a sphere.

\begin{lemma}[Root of $R^2$ in $\theta$ for fixed $\varphi$, spheroid]\label{lem:theta0_of_R2}Let $\xx=(x,y,z)\in\mathbb{R}^3$, not on the spheroid, and $a\neq b$. Fix $\varphi=\phibar\in[0,2\pi)$ and assume that if $z=0$ then $\phibar-\atantwo(y,x)\neq\pi/2+p\pi,~p\in\mathbb{Z}$. Define
\begin{equation}
    R^2(\theta,\phibar,\xx) = \left(a\sin(\theta)\cos(\phibar)-x\right)^2 + \left(a\sin(\theta)\sin(\phibar)-y\right)^2 + \left(b\cos(\theta)-z\right)^2.
    \label{eq:Rnew}
\end{equation}
Then $R^2(\theta,\phibar,\xx)=0$ for $\theta=\thetaroot$, where
\begin{equation} \label{eq:thetaroot_beta}
    \thetaroot = \Arg(\beta) - i \ln\left(|\beta|\right),
\end{equation}
and $\beta$ satisfies the quartic equation,
\begin{equation}
    \frac{\Delta}{4}\beta^4+\tau\beta^3+\left(\frac{\Delta}{2}+d^2\right)\beta^2+\overline{\tau}\beta+\frac{\tau}{4}= 0,
    \label{eq:quartic}
\end{equation}
with
\begin{equation}
    \Delta = b^2-a^2,\quad \tau = -bz+ia\left(x\cos(\phibar)+y\sin(\phibar)\right),\quad d = a^2+x^2+y^2+z^2,
    \label{eq:quartic_helper}
\end{equation}
and $\overline{\tau}$ denotes the complex conjugate of $\tau$.
\end{lemma}

\begin{proof}
The squared-distance function \eqref{eq:Rnew} can be written as
\begin{equation}
    R^2(\theta,\phibar,\xx) = a^2\sin^2(\theta) + b^2\cos^2(\theta) + x^2+y^2+z^2 - 2a\left(x\cos(\phibar)+y\sin(\phibar)\right)\sin(\theta) - 2bz\cos(\theta).
    \label{eq:R2lem_proof1}
\end{equation}
Next, we make the ansatz $\thetaroot=-i\eta$, $\eta=\ln(\beta)$ for some $\beta\in\mathbb{C}$ with $0=R^2(\thetaroot,\phibar,\xx)$. Inserting the ansatz into \eqref{eq:R2lem_proof1} gives
\begin{equation}
    \begin{split}
        0=\frac{1}{2}\Big(a^2+b^2+2(x^2+y^2+z^2)&-4bz\cosh(\eta)+(b^2-a^2)\cosh(2\eta) \\
        &+4ia(x\cos(\phibar)+y\sin(\phibar))\sinh(\eta)\Big).
    \end{split}
\end{equation}
Expressing the trigonometric functions in exponential form and substituting $\eta=\ln(\beta)$ yields the quartic equation \eqref{eq:quartic}-\eqref{eq:quartic_helper}. Substituting the ansatz $\thetaroot=-i\ln(\beta)$ and using the complex logarithm then gives \eqref{eq:thetaroot_beta}. Since $a\neq b$, the equation has four distinct solutions for $\beta$, producing four corresponding roots $\theta_0$. In practice, the relevant pair of complex conjugate roots are those with the smallest imaginary part in magnitude.
\end{proof}

\subsection{General axisymmetric geometries}\label{ss:root_finding_general}
The closed-form formulas above apply only to spheroids. For a general axisymmetric surface, the azimuthal root $\varphi_0(\theta,\xx)$ remains available analytically (Lemma \ref{lem:phi0}), while the polar roots $\theta_0^\lambda(\xx)$ and $\theta_0(\varphi,\xx)$ are computed numerically.

For instance, to the $\theta_0(\varphi,\xx)$ of $R(\theta,\varphi,\xx)^2=0$ at fixed $\varphi$, we use a complex Newton iteration, following the approach outlined in \cite{SORGENTONE2023}. The initial guess is chosen from the polar angle $\theta^*$ corresponding to the discretization point on the surface closest to the target $\xx$. We then set the initial guess as $\theta^*+i\beta$. With $\beta=0.1$, convergence to a strict tolerance is usually achieved within 10 iterations. 

The same procedure is used to compute $\theta_0^\lambda(\xx)$, replacing $R(\theta,\varphi,\xx)^2$ by $R_\lambda(\theta,\xx)^2$. In the rare cases where convergence is not achieved, we restart the iteration and use over-relaxation.

\section{Cost model for close evaluation}
\label{s:computational_complexity}
Here we briefly discuss the per-target cost of the presented algorithm. The goal is not to provide a sharp complexity theorem with explicit theoretical guarantees, but rather to identify the dominant operations in the current implementation and how they scale with the surface discretization and the adaptive refinement in the polar direction. 
The current MATLAB implementation is not well suited for a detailed throughput study, which is left for future work.

We assume a base surface discretization consisting of an $\nphi$-point trapezoidal rule in the azimuthal direction and an $\nt$-point Gauss--Legendre panel in the polar direction, with the layer density known at these nodes. If several base Gauss--Legendre panels are used in the polar direction, the discussion below applies panelwise. The cost of constructing the base discretization, solving for the layer density, and performing one-time precomputations is not included.

After adaptive refinement in the polar direction, the target-dependent refined grid contains $\npan\nGL$ polar nodes for each of the $\nphi$ azimuthal values, giving a total of $\nphi\npan\nGL$ refined surface points. In the current implementation, a substantial part of the cost comes from interpolating the layer density from the base polar nodes to this refined grid. For each fixed azimuthal node, this is a one-dimensional interpolation from $\nt$ source nodes to $\npan\nGL$ target nodes, resulting in a total interpolation cost of
\begin{equation}
\mathcal{O}(\nphi\npan\nGL\nt).
\end{equation}

Let $\npanSQphi$ denote the number of refined panels on which the singularity swap quadrature (SSQ) method is used in the azimuthal direction, and let $\npanTZ$ denote the number of panels on which the trapezoidal rule is sufficient, so that $\npan=\npanSQphi+\npanTZ$. For each of the $\npanSQphi\nGL$ polar nodes where azimuthal SSQ is applied, a one-dimensional $\nphi$-point FFT is computed, giving a total cost of
\begin{equation}
\mathcal{O}\left(\nphi\log(\nphi)\npanSQphi\nGL\right).
\end{equation}
After the FFTs have been computed on the panels requiring azimuthal SSQ, the azimuthal integral must still be evaluated at every refined polar node. This costs $\mathcal{O}(\nphi)$ per node, both for SSQ weights and for the trapezoidal rule, and therefore contributes
\begin{equation}
\mathcal{O}(\nphi\npan\nGL).
\end{equation}
The remaining operations in the polar direction entail evaluation of one-dimensional integrals, resulting in computational cost that is subdominant relative to these steps.

Thus, for a fixed target point $\xx$, the dominant per-target cost in the current implementation is described by
\begin{equation}
\mathcal{O}(\nphi\npan\nGL\nt) + \mathcal{O}\left(\nphi\log(\nphi)\npanSQphi\nGL\right) + \mathcal{O}\left(\nphi\npan\nGL\right).
\label{eq:full_cost}
\end{equation}
The number of $\nGL$-point Gauss–Legendre panels, $\npan$, determined by the adaptive algorithm for a given value of $\nGL$, depends primarily on the target location $\xx$ and the prescribed error tolerance $\eps$. The last term in \eqref{eq:full_cost} has the same panel-dependent factor as the interpolation term, but lacks the additional factor $\nt$, and is therefore absorbed into the first term. Accordingly, the dominant-cost model can be summarized as
\begin{equation}
    C_1(\eps,\xx,\nGL)\nt\nphi + C_2(\eps,\xx,\nGL)\nphi\log(\nphi),
    \label{eq:simplified_cost}
\end{equation}
where $C_1,~C_2>0$ capture the target-dependent refinement and the distribution of panels requiring SSQ in the azimuthal direction.

\begin{remark}
The discussion above assumes that the kernel factor $k(\xx,\yy)$ in \eqref{eq:generic_layer_potential} depends on the target point $\xx$. However, if $k(\xx,\yy)$ is independent of $\xx$, the Fourier coefficients of the corresponding smooth factor can instead be computed once in a precomputation step. Interpolation can then be performed directly on these Fourier coefficients for each of the $\npanSQphi$. In this case, the FFT cost becomes target-independent, effectively eliminating the second term in \eqref{eq:simplified_cost}.
\end{remark}

\section{Numerical experiments}
\label{s:numerical_experiments}
We now numerically demonstrate the accuracy, adaptivity, and error-control behavior of the proposed singularity swap surface quadrature (S3Q) method in Section \ref{s:s3q}. We consider several layer potentials over different smooth axisymmetric geometries and seek to evaluate them to a prescribed error tolerance $\epsilon$. Both single-panel and multi-panel base discretizations in the polar direction are used. Unless stated otherwise, a single-panel discretization is employed. In the multi-panel case, the geometry is partitioned into segments in the polar direction, which we refer to as \textit{patches}.

All code for this paper is written and run in MATLAB. This implementation is intended to demonstrate accuracy and adaptive behavior rather than optimized runtime performance.

\subsection{Experimental setup}
We consider layer potentials arising from Laplace, Helmholtz, and Stokes problems, all of which fit into the generic form \eqref{eq:generic_layer_potential} for appropriate choices of the kernel factor $k(\xx,\yy)$ and singularity power $p$. 
In all numerical experiments, the surface parametrization is assumed to be analytically known. The discretization parameters $\nt$ and $\nphi$ are chosen so that the geometry and sampled surface values are well resolved on the underlying tensor-product grid, which we refer to as the \textit{base discretization}. The layer density $\sigma$ is prescribed analytically so that it can be sampled on this grid, evaluated at complex roots for the predictors, and used to compute reference solutions. Within S3Q, the sampled density values on the base discretization are then interpolated onto the adaptively refined polar grid when needed.

Given a target point $\xx$ with the minimum distance $d$ from the surface $S$, we use the quadrature error predictor of \cite{SORGENTONE2023} to determine whether the base discretization is sufficient to evaluate $u(\xx)$ to the prescribed tolerance $\epsilon$. If the predicted error exceeds $\epsilon$, S3Q is activated and new quadrature weights are computed through the adjoint formulation described in Section \ref{ss:adjoint_method}. 
In particular, S3Q combines standard quadrature and singularity swap quadrature (SSQ) in the azimuthal and polar directions, while also allowing adaptive refinement in the polar direction using $\nGL$-point Gauss--Legendre panels. The number of such panels is denoted by $\npan$.

The error in evaluating $u(\xx)$ is measured against a reference solution. For scalar-valued layer potentials, we report the absolute error between the numerical and reference solution. For Stokes potentials, which are vector-valued, we report the maximum componentwise absolute error between the numerical and reference solution. In both settings, we refer to this as the \textit{absolute error}. The reference solution is computed using an adaptive quadrature routine.

For the Laplace experiments, the target points are sampled on a uniform $M\times M$ grid in the $xz$-plane with $M=200$ and fixed $y=0$, with points inside the geometry discarded. In the Helmholtz experiment, we consider two similar target planes, with $M=120$. In the Stokes experiment, we use the same type of target grid as in the Laplace experiments, with the only difference being that we consider targets both inside and outside the geometry.

In Section \ref{ss:helmholtz}, we also show that S3Q can be applied patchwise and coupled to a fast summation method, namely the fast multipole method (FMM) \cite{GREENGARD1987325,Greengard_Rokhlin_1997,greengard1998,CHENG1999468,GREENGARD2002642}, to accelerate the standard quadrature. Specifically, we use the point-based FMM from the FMM3D package.\footnote{The FMM3D package can be found at \href{https://github.com/flatironinstitute/FMM3D}{https://github.com/flatironinstitute/FMM3D}.}

\subsection{Harmonic single layer potential: Type S3}\label{ss:laplace_slp}
As a first example, we consider the harmonic single layer potential, corresponding to $p=1/2$,
\begin{equation}
    u(\xx) = \int_S \frac{\sigma(\yy)}{\|\yy-\xx\|}\dS(\yy)
    \label{eq:laplace_single_layer}
\end{equation}
on a type S3 spheroid. Here a type $SX$ spheroid denotes a spheroid with aspect ratio $1:X$, with semi-axes $a(\theta)=1$ and $b(\theta)=X$. The layer density function is chosen as
\begin{equation}
    \sigma(\ggamma(\theta,\varphi))=\sin(5\theta)e^{-\cos^2(\varphi)}+1.03,
    \label{eq:sigma_spheroid_s3}
\end{equation}
and the potential is evaluated for the three error tolerances $\eps=10^{-4},~10^{-6},~10^{-8}$.

Figure \ref{fig:laplace_slp_s3} presents the results with $\nt=\nphi=40$ and $\nGL=32$. The figure shows that the standard quadrature error predictor accurately identifies the target points where S3Q must be used, and that the resulting measured errors at those points are close to or below the prescribed tolerance $\eps$. The maximum absolute error over all target points is approximately $1.2\times 10^{-4}$, $5.0\times 10^{-8}$, and $5.3\times 10^{-9}$ for the tolerances $\eps=10^{-4},~10^{-6},~10^{-8}$, respectively.

As $\eps$ decreases, the number of Gauss--Legendre panels $\npan$ selected by the adaptive algorithm in the polar direction increases, as shown in Figure \ref{fig:laplace_slp_s3_histogram_nGL32}. Despite the close proximity of target points to the surface, the maximum $\npan$ required to achieve the desired error tolerances $\eps=10^{-4},~10^{-6},~10^{-8}$ was $12$, $13$ and $14$, respectively. For $\eps=10^{-8}$, this maximum was reached at only two target points situated approximately $5.2 \cdot 10^{-5}$ from the surface.

\begin{figure}[!t]
\centering
\begin{minipage}{0.42\linewidth}
\begin{subfigure}[t]{\linewidth}
\includegraphics[trim={2.1cm 0.1cm 2.0cm 0.6cm},clip,width=\linewidth]{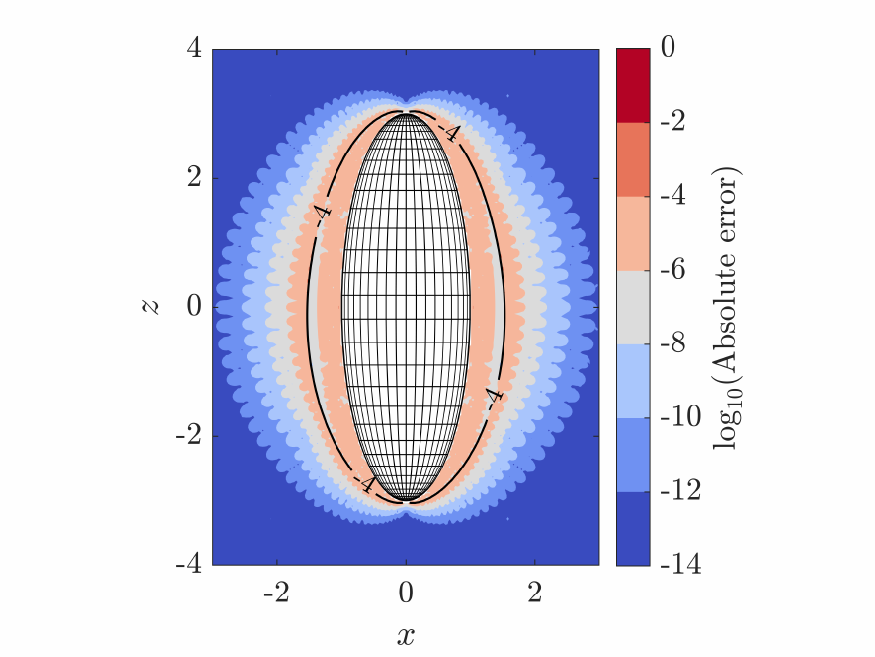}
\caption{}
\label{fig:laplace_slp_s3_error_contour}
\end{subfigure}
\end{minipage}
\begin{minipage}{0.57\linewidth}
\begin{subfigure}[t]{\linewidth}
\includegraphics[width=0.97\linewidth]{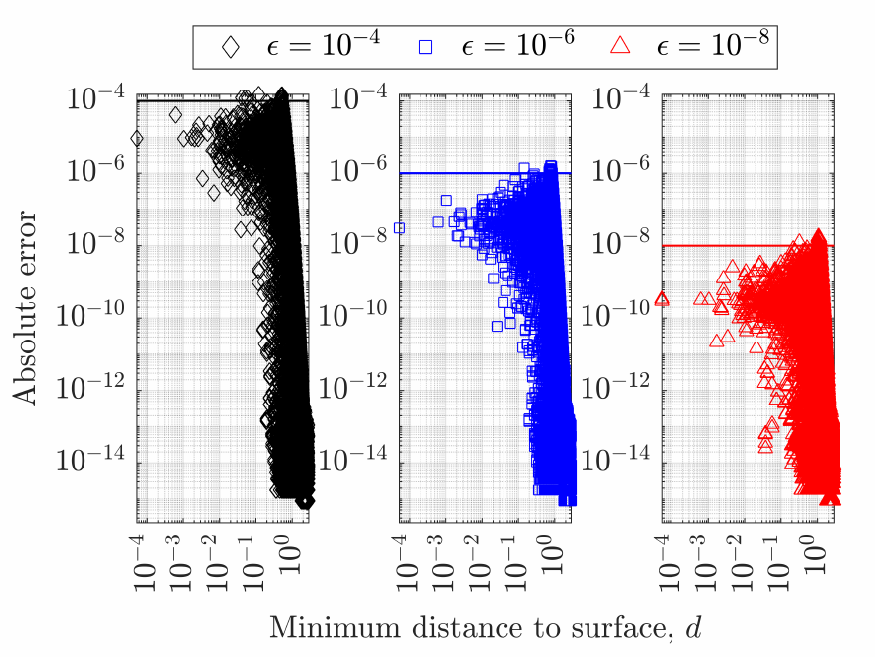}
\caption{}
\label{fig:laplace_slp_s3_err_vs_dist}
\end{subfigure}
\end{minipage}
\begin{subfigure}[t]{\linewidth}
\centering
\includegraphics[width=0.45\linewidth]{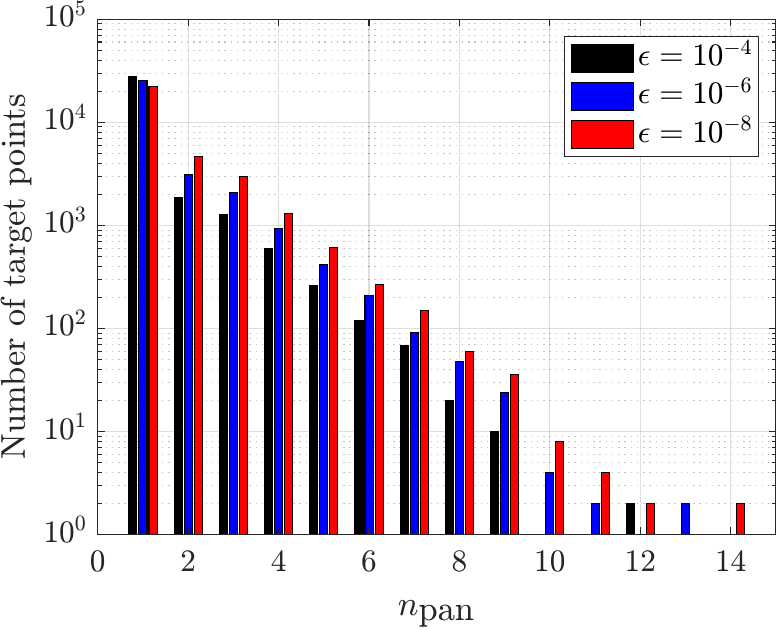}
\caption{}
\label{fig:laplace_slp_s3_histogram_nGL32}
\end{subfigure}
\caption{Harmonic single layer potential on a type S3 spheroid computed using S3Q for three prescribed error tolerances $\eps$. Panel (a) shows the measured absolute error for all target points together with the contour where the error predictor for the standard quadrature equals $\eps=10^{-4}$. Panel (b) shows the error versus the distance to the surface for all target points for each $\eps$. Panel (c) shows the distribution of the number of $\nGL$-point Gauss--Legendre panels set by the adaptive algorithm in the polar direction for each $\eps$. The bars are ordered from left to right with decreasing tolerance.}
\label{fig:laplace_slp_s3}
\end{figure}

\subsection{Harmonic double layer potential: Type S10}\label{ss:laplace_dlp}
We next demonstrate the performance of S3Q for a stronger singularity and illustrate the adaptive refinement behavior for a highly elongated geometry. Specifically, we consider the harmonic double layer potential, corresponding to $p=3/2$,
\begin{equation}
    u(\xx) = \int_S \frac{\nn_\yy\cdot(\yy-\xx)~\sigma(\yy)}{\|\yy-\xx\|^3}\dS(\yy)
    \label{eq:laplace_double_layer}
\end{equation}
evaluated on a type S10 spheroid. Here, $\nn_\yy$ denotes the outward-pointing normal vector at $\yy\in S$. The prescribed tolerances are $\eps=10^{-4},~10^{-6},~10^{-8}$, and the layer density is chosen as
\begin{equation}
\sigma(\ggamma(\theta,\varphi))=1+\sin(6\varphi+\theta)\sin(\theta)^2.
    \label{eq:sigma_spheroid_s10}
\end{equation}
The base discretization uses $n_t=160$, $n_\varphi=100$, and we set $\nGL=16$.

Figure \ref{fig:laplace_dlp_s10} shows the result for error tolerance $\eps=10^{-8}$. The measured absolute error is generally kept below the tolerance, with only 44 out of 30354 target points slightly exceeding it, and with a maximum error $2.4\times 10^{-8}$ over all target points. For $\eps=10^{-4}$ and $\eps=10^{-6}$, the corresponding maximum absolute errors are $8.3\times 10^{-5}$ and $1.6\times 10^{-6}$, respectively. The figure also shows how $\npan$ varies with the location of the target point relative to the surface.
 
This behavior is examined further in Figure \ref{fig:laplace_dlp_s10_dist_vs_npan}, where the total number of quadrature points in the polar direction, $\npan\times\nGL$, is plotted against the minimum distance $d$ from the target point to the surface for the three prescribed tolerances. The required number of points grows approximately log-linearly with $d$, and increases with stricter tolerances.

Note that the surface mesh shown in Figure \ref{fig:laplace_dlp_s10} represents the base discretization with $\nt=160$ and $\nphi=100$, whereas each adaptively introduced Gauss--Legendre panel in the polar direction contains only $\nGL=16$ points. Thus, $\npan=10$ corresponds to the same number of quadrature points in the polar direction as the base discretization.

\begin{figure}[!t]
\centering
%
%
\begin{tikzpicture}

\begin{axis}[%
width=0.6*4.521in,
height=3in,
at={(0.758in,0.405in)},
scale only axis,
point meta min=1,
point meta max=25,
xmin=0,
xmax=1,
ymin=0,
ymax=1,
axis line style={draw=none},
ticks=none,
axis x line*=bottom,
axis y line*=left,
colormap={mymap}{[1pt] rgb(0pt)=(0.19245,0,0); rgb(1pt)=(0.321029,0.170251,0.170251); rgb(2pt)=(0.411196,0.240772,0.240772); rgb(3pt)=(0.484876,0.294884,0.294884); rgb(4pt)=(0.548751,0.340503,0.340503); rgb(5pt)=(0.605929,0.380693,0.380693); rgb(6pt)=(0.658158,0.417029,0.417029); rgb(7pt)=(0.706537,0.450443,0.450443); rgb(8pt)=(0.751809,0.481543,0.481543); rgb(9pt)=(0.770846,0.545808,0.510754); rgb(10pt)=(0.789423,0.603265,0.538382); rgb(11pt)=(0.807573,0.655707,0.56466); rgb(12pt)=(0.825324,0.704254,0.589768); rgb(13pt)=(0.842701,0.749664,0.61385); rgb(14pt)=(0.859727,0.792477,0.637022); rgb(15pt)=(0.876422,0.833092,0.65938); rgb(16pt)=(0.892805,0.871817,0.681005); rgb(17pt)=(0.908893,0.908893,0.701964); rgb(18pt)=(0.924701,0.924701,0.759799); rgb(19pt)=(0.940244,0.940244,0.813533); rgb(20pt)=(0.955533,0.955533,0.863931); rgb(21pt)=(0.970582,0.970582,0.911547); rgb(22pt)=(0.985401,0.985401,0.956796); rgb(23pt)=(1,1,1)},
colorbar horizontal,
colorbar style={at={(0.5,1.03)}, anchor=south, xticklabel pos=upper, xlabel style={font=\color{white!15!black}}, xlabel={$n_{\textrm{pan}}$}},
colorbar sampled,
colorbar style={samples=25}
]
\end{axis}
\end{tikzpicture}%
%
%
\begin{tikzpicture}

\begin{axis}[%
width=0.6*4.521in,
height=3in,
at={(0.758in,0.405in)},
scale only axis,
point meta min=-14,
point meta max=0,
xmin=0,
xmax=1,
ymin=0,
ymax=1,
axis line style={draw=none},
ticks=none,
axis x line*=bottom,
axis y line*=left,
colormap={mymap}{[1pt] rgb(0pt)=(0.230033,0.298999,0.754002); rgb(1pt)=(0.438654,0.574065,0.953911); rgb(2pt)=(0.667541,0.779704,0.993653); rgb(3pt)=(0.865395,0.86541,0.865396); rgb(4pt)=(0.969053,0.721418,0.612449); rgb(5pt)=(0.908216,0.459395,0.358028); rgb(6pt)=(0.705998,0.0161265,0.150001)},
colorbar horizontal,
colorbar style={at={(0.5,1.03)}, anchor=south, xticklabel pos=upper, xlabel style={font=\color{white!15!black}}, xlabel={$\log_{10}(\textrm{Absolute error})$}},
colorbar sampled,
colorbar style={samples=8}
]
\end{axis}
\end{tikzpicture}
\begin{tikzpicture}[remember picture,overlay]
\node [xshift=0cm,yshift=7.1cm] at (current page.center){\includegraphics[trim={5.4cm 0.15cm 11.1cm 1.1cm},clip,width=\textwidth]{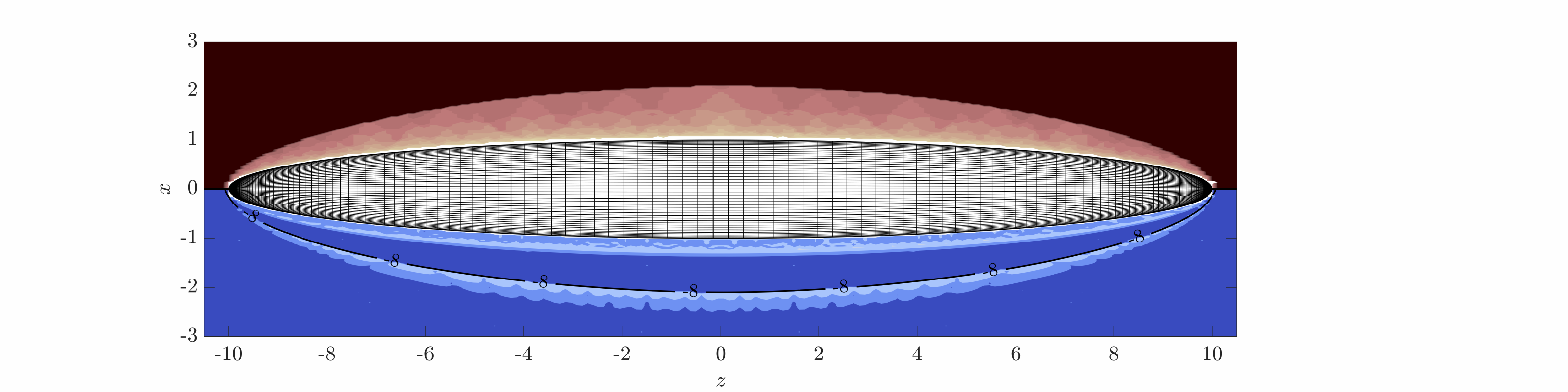}};
\end{tikzpicture}
\vspace{-2.3cm}\caption{Harmonic double layer potential on a type S10 spheroid computed using S3Q for error tolerance $10^{-8}$. The upper part shows the number of $\nGL$-point Gauss--Legendre panels, $\npan$, set by the adaptive algorithm in the polar direction, while the lower part shows the measured absolute error.}
\label{fig:laplace_dlp_s10}
\end{figure}

\begin{figure}[!t]
\centering
\includegraphics[width=0.45\textwidth]{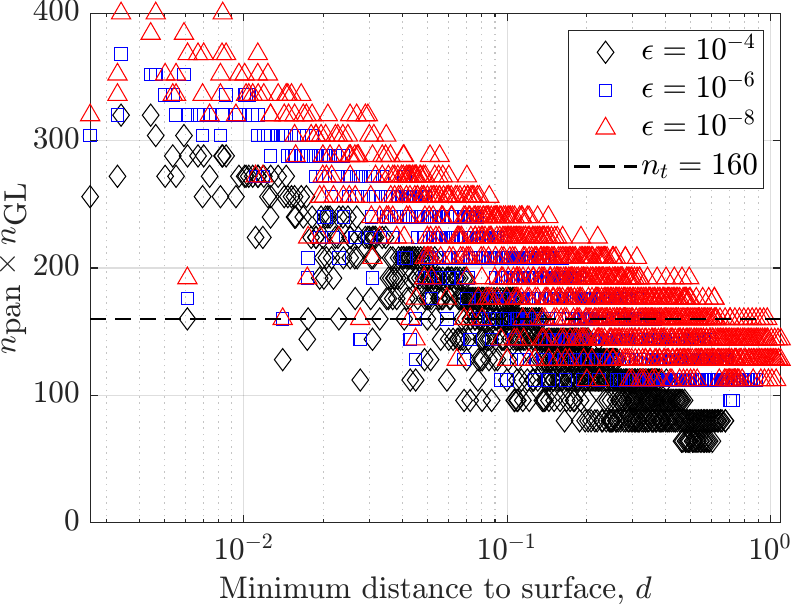}
\caption{Total number of discretization points in the polar direction set by the adaptive algorithm for prescribed error tolerance $\eps$. The dashed line indicates the number of points in the base discretization in the polar direction. The problem setup is as in Figure \ref{fig:laplace_dlp_s10}.}
\label{fig:laplace_dlp_s10_dist_vs_npan}
\end{figure}

\subsection{Helmholtz double layer potential: Multiple type S3}\label{ss:helmholtz}
We next consider a Helmholtz double layer potential and illustrate how S3Q can be applied patchwise and coupled to fast summation by using the near-quadrature correction together with an FMM-accelerated standard quadrature. We consider the Helmholtz double layer potential on a collection of type S3 spheroids, denoted by $\mathcal{S}$, given by
\begin{equation}
u(\xx) = \frac{1}{4\pi}\int_{\mathcal{S}} \frac{\left(1-i\omega\|\yy-\xx\|\right)e^{i\omega\|\yy-\xx\|}~\nn_{\yy}\cdot(\xx-\yy)~\sigma(\yy)}{\|\yy-\xx\|^3}\dS(\yy),
\end{equation}
where $\omega$ denotes the wavenumber and $\nn_{\yy}$ the outward-pointing unit normal at $\yy\in\mathcal{S}$.

To apply S3Q, we split the Helmholtz kernel into a non-smooth part and a smooth part, following \cite{HELSING2014686}. More precisely, we write the Helmholtz double layer kernel as
\begin{equation}
    \mathcal{D}^H(\xx,\yy) = D(\xx,\yy)\left(H_1(\xx,\yy)+iH_2(\xx,\yy)\right),
\end{equation}
where
\begin{equation}
    D(\xx,\yy) = \frac{\nn_\yy\cdot(\xx-\yy)}{4\pi\|\yy-\xx\|^3}
\end{equation}
and
\begin{equation}
    H_1(\xx,\yy) = \cos(\omega\|\yy-\xx\|) + \omega\|\yy-\xx\|\sin(\omega\|\yy-\xx\|),
\end{equation}
\begin{equation}
    H_2(\xx,\yy) = \sin(\omega\|\yy-\xx\|) - \omega\|\yy-\xx\|\cos(\omega\|\yy-\xx\|).
\end{equation}
The factor $DH_1$ contains the non-smooth part of the kernel, while $DH_2$ is smooth. Indeed, expanding $H_1$ about $\|\yy-\xx\|=0$ shows that $DH_1$ consists of one contribution of type $p=1/2$ and one contribution of type $p=3/2$, and therefore falls within the class of nearly singular kernels treated by S3Q. Thus, S3Q is applied only to these two non-smooth contributions of $DH_1$, whereas the smooth contribution $DH_2$ is evaluated by the standard quadrature, accelerated by the FMM.\footnote{The Helmholtz single layer potential admits an analogous splitting; see \cite{HELSING2014686}. The same idea also applies to Yukawa single and double layer kernels, which similarly enables the use of S3Q.}

The geometry consists of the cluster of 40 type S3 spheroids shown in Figure \ref{fig:helmholtz_s3q_err}. The base discretization for each spheroid has three panels in the polar direction, each with $\nt=10$, and $\nphi=26$ points in the azimuthal direction. Figure \ref{fig:helmholtz_spheroid} shows one such spheroid. In this experiment, the wavenumber is set to $\omega=4.2$, the layer density is chosen as
\begin{equation}
    \sigma(\yy) = 2+\sin(y_3),\quad y=\left(y_1,y_2,y_3\right),
\end{equation}
and the prescribed tolerance is $\eps=10^{-6}$. The full Helmholtz double layer potential is first evaluated using the FMM, after which S3Q is used as a local near-quadrature correction for the two non-smooth terms in $DH_1$. Both the S3Q corrections and the quadrature error predictors of \cite{SORGENTONE2023} are applied patchwise, so that each patch is treated independently when determining whether correction is required.

Figure \ref{fig:helmholtz_s3q_err} shows the result. Of the 28800 target points in the grid, 27193 lie outside the particles, and 5257 of these exterior targets, or 19.3\%, are classified as requiring S3Q. The maximum absolute error over all exterior target points is $3.0\times 10^{-6}$. 
Excluding the cost of evaluating the quadrature error predictors, approximately 28\% of the runtime is spent in the FMM and 72\% in constructing the S3Q near-quadrature weights.

\begin{figure}[!t]
\centering
\begin{minipage}{0.34\linewidth}
\begin{subfigure}[t]{\linewidth}
\includegraphics[trim={4.0cm 2.0cm 4.65cm 1.6cm},clip,width=0.85\linewidth]{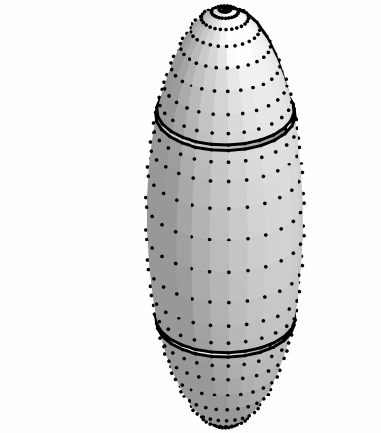}
\caption{}
\label{fig:helmholtz_spheroid}
\end{subfigure}
\end{minipage}
\begin{minipage}{0.65\linewidth}
\begin{subfigure}[t]{\linewidth}
\includegraphics[width=1\linewidth]{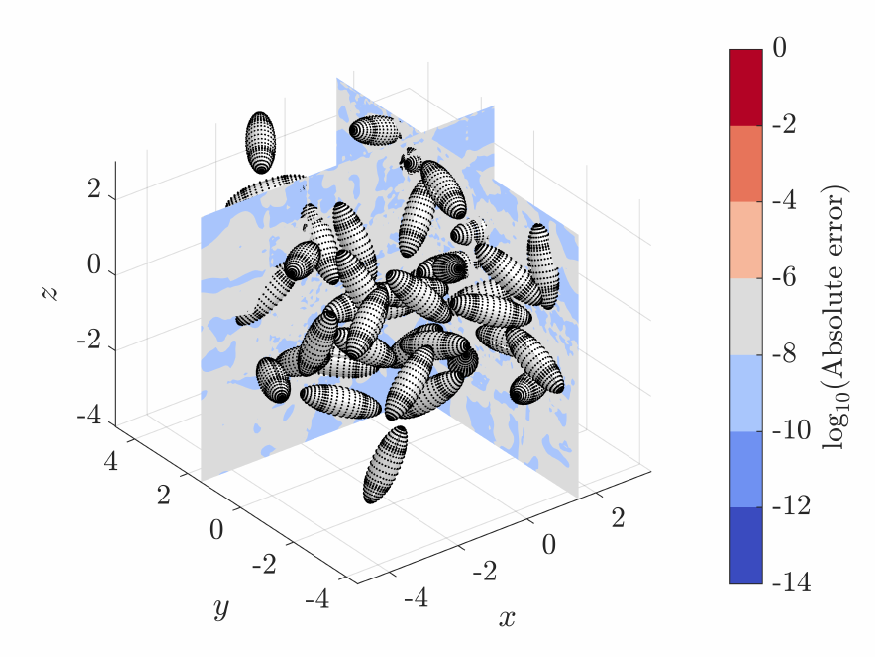}
\caption{}
\label{fig:helmholtz_s3q_err}
\end{subfigure}
\end{minipage}
\caption{Panel (a) shows a type S3 spheroid with a three-patch base discretization in the polar direction. Panel (b) shows the measured absolute error for the Helmholtz double layer potential on a collection of 40 type S3 spheroids. The potential is evaluated using FMM together with S3Q, with prescribed tolerance $\eps=10^{-6}$.}
\label{fig:helmholtz}
\end{figure}

\begin{figure}[!t]
\centering
\begin{subfigure}[t]{0.33\textwidth}
\includegraphics[trim={3.0cm 0.2cm 3.1cm 0.55cm},clip,width=\linewidth]{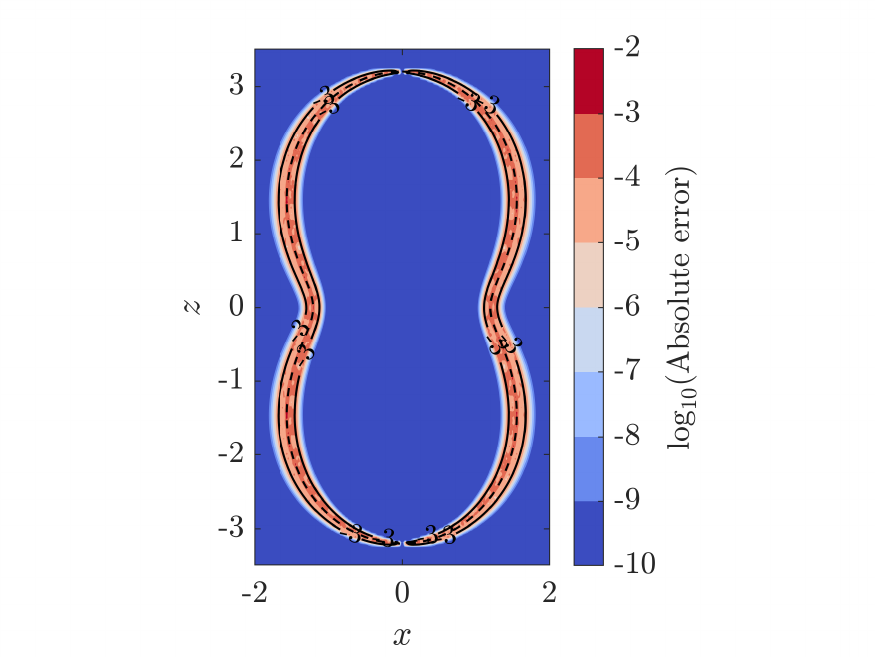}
\caption{}
\label{fig:stokes_dlp_stresslet_id_peanut_tol3}
\end{subfigure}
\begin{subfigure}[t]{0.33\textwidth}
\includegraphics[trim={3.0cm 0.2cm 3.1cm 0.55cm},clip,width=\linewidth]{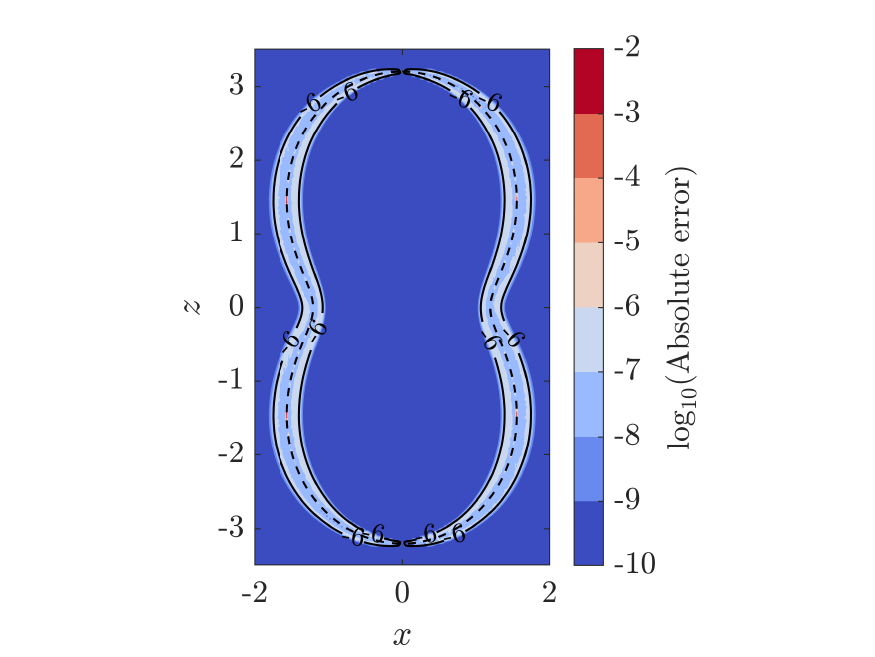}
\caption{}
\label{fig:stokes_dlp_stresslet_id_peanut_tol6}
\end{subfigure}
\begin{subfigure}[t]{\textwidth}
\centering
\includegraphics[width=0.47\linewidth]{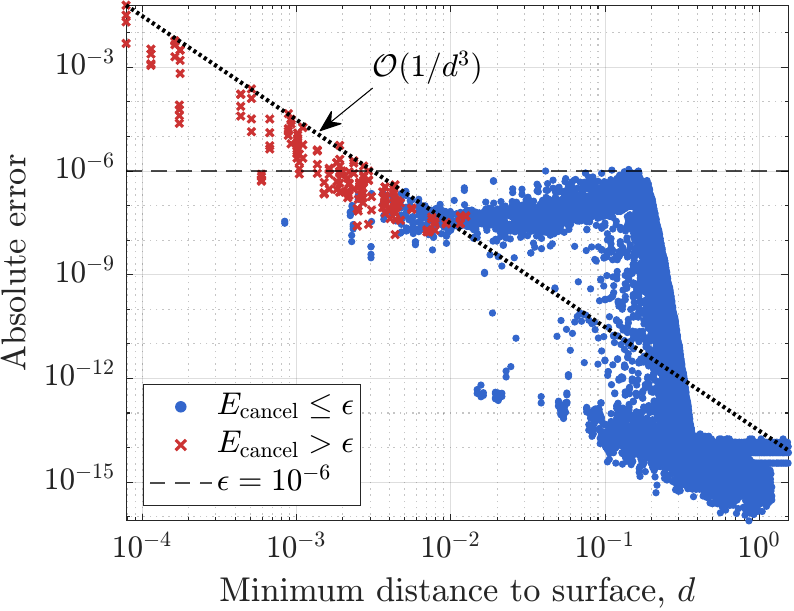}
\caption{}
\label{fig:stokes_dlp_stresslet_id_peanut_tol6_dist_err}
\end{subfigure}
\caption{Panels (a) and (b) show the measured absolute stresslet identity error for the Stokes double layer potential evaluated using S3Q with prescribed tolerances $\eps=10^{-3}$ and $10^{-6}$, respectively. The dashed black line indicates the peanut-shaped surface, and the solid black lines show the contour where the standard quadrature error predictor equals $\eps$. Panel (c) shows the measured absolute error versus the minimum distance $d$ to the surface for $\eps=10^{-6}$. Target points for which the predicted cancellation error $E_{\textrm{cancel}}$ exceeds $\eps$ are marked with crosses.}
\label{fig:stokes_dlp_stresslet_id_peanut}
\end{figure}

\subsection{Stokes double layer potential: Axisymmetric geometry}\label{ss:stokes_dlp}
We now evaluate the Stokes double layer potential $\uu$, corresponding to $p=5/2$, on an axisymmetric ``peanut'' geometry using S3Q. The three components of $\uu$ are given by\footnote{The Einstein summation convention is applied here, where summation over repeated indices in a term is implied.}, for $i=1,2,3$,
\begin{equation}
    u_i(\xx) = \int_S \mathcal{T}_{ijk}(\xx-\yy)\sigma_j(\yy)n_k(\yy)\dS,\quad \mathcal{T}_{ijk}(\rr) = -6\frac{r_ir_jr_k}{\left\|\rr\right\|^5},
    \label{eq:stokes_dlp}
\end{equation}
where $\mathcal{T}_{ijk}$ is the stresslet tensor. 
We take the constant density $\sigma_j(\yy)=1$, $j=1,2,3$, for which the stresslet identity gives an exact reference solution.
\footnote{For any constant density vector $\Tilde{\boldsymbol{\sigma}}$ the Stokes double layer potential satisfies $\uu(\xx)=\mathbf{0}$ outside $S$, $\uu(\xx)=4\pi\Tilde{\boldsymbol{\sigma}}$ on $S$, and $\uu(\xx)=8\pi\Tilde{\boldsymbol{\sigma}}$ inside $S$; see \cite{Pozrikidis1992}.} 
The prescribed tolerances are $\eps=10^{-3}$ and $\eps=10^{-6}$, and the base discretization uses $\nt=320$, $\nphi=180$, and we set $\nGL=16$. 

Figure \ref{fig:stokes_dlp_stresslet_id_peanut_tol3} and \ref{fig:stokes_dlp_stresslet_id_peanut_tol6} show the measured absolute error for $\eps=10^{-3}$ and $\eps=10^{-6}$, respectively. 
For $\eps=10^{-3}$, the measured error remains close to the requested accuracy over nearly the entire target set, with a maximum absolute error of $4.4\times 10^{-2}$, attained at the target point closest to the surface, for which $d=7.9\times 10^{-5}$. For $\eps=10^{-6}$, the same overall behavior is observed, except in a thin region nearest the surface where the prescribed accuracy is not achieved.

This loss of accuracy for very close targets is caused by \textit{catastrophic cancellation} in the azimuthal SSQ step. The issue arises when the kernel has both a strong singularity and a near-vanishing numerator, and becomes more severe as the singularity strengthens. In this regime, the true value of the integral is small, while the SSQ quadrature sum is formed from much larger terms that nearly cancel, leading to substantial amplification of roundoff errors. This behavior was already noted in the original SSQ paper \cite[Remark 11]{AFKLINTEBERG2021}, and has recently been analyzed in detail in \cite{krantz2025stabilizingsingularityswapquadrature}, where it is shown to be tied to the use of a fixed interpolation basis that does not reflect the local vanishing structure of the numerator.

To examine this more directly, let $E_{\textrm{cancel}}$ denote the approximate cancellation error introduced in \cite[Remark 2.1]{krantz2025stabilizingsingularityswapquadrature}. Figure \ref{fig:stokes_dlp_stresslet_id_peanut_tol6_dist_err} shows that the targets for which $E_{\textrm{cancel}}>\eps$ generally coincide with those whose errors exceed $\eps$. 
Thus, the observed loss of accuracy for the closest targets is well explained by floating-point cancellation in the azimuthal SSQ step, rather than by a failure of the adaptive refinement or error-control mechanisms of S3Q. This is precisely the failure mode addressed by the translated-basis stabilization developed in \cite{krantz2025stabilizingsingularityswapquadrature}.

\section{Conclusions}
\label{s:conclusions}
We have presented an adaptive approach for evaluating nearly singular layer potentials on smooth axisymmetric surfaces, in which standard quadrature is used whenever possible and the proposed \textit{singularity swap surface quadrature} (S3Q) is applied only where necessary. A key feature of the method is that all algorithmic choices and parameter values associated with the near-quadrature correction are determined automatically from the prescribed error tolerance.

The S3Q method combines singularity swap quadrature (SSQ) with a local adaptive discretization procedure. It requires only the prescribed error tolerance as essential user input, together with, optionally, the order of the Gauss--Legendre quadrature used for adaptive refinement in the polar direction. The near-quadrature correction employs SSQ in the azimuthal direction together with adaptive refinement in the polar direction, where either standard Gauss--Legendre quadrature or SSQ is used on the refined grid depending on the predicted quadrature error. After the azimuthal correction, the remaining integrand in the polar direction typically has a nearly singular behavior of one degree lower than that of the original layer potential kernel. This integral is then evaluated using SSQ only when the remaining near singularity demands it, and otherwise by standard Gauss--Legendre quadrature. A central component of the method is the use of quadrature and interpolation error predictors to control the local refinement. In particular, the interpolation error prediction extends the complex-variable framework of \cite{AFKLINTEBERG2022}, previously used for quadrature error prediction in \cite{SORGENTONE2023}, and provides a practical mechanism for automatic parameter selection.

The numerical experiments show that this strategy provides reliable error control for harmonic single and double layer potentials, while requiring only moderate refinement in the polar direction even for targets close to the surface. They further show that the framework extends naturally to Helmholtz problems through kernel splitting, and that in the corresponding multi-body example the S3Q correction can be applied patchwise while the standard quadrature is accelerated through coupling to the fast multipole method.

For the Stokes double layer potential, the experiments show that the combination of standard quadrature and S3Q remains accurate over most of the close-evaluation region, but that a very thin innermost layer becomes dominated by floating-point cancellation in the azimuthal SSQ step. In the present experiments, this limitation is primarily visible for the $1/r^5$ singularity and only for very close targets when high accuracy is requested.

A robust treatment of this regime is outside the scope of the present paper. However, the recent stabilization developed in \cite{krantz2025stabilizingsingularityswapquadrature} shows that this cancellation can be greatly reduced by replacing the standard SSQ basis with target-adapted translated bases. In the present surface setting, this suggests a natural next step: to combine the near-quadrature correction developed here with a stabilized azimuthal SSQ evaluator. This, together with a more optimized implementation of the method, is left for future work.

\section*{Acknowledgments}
The authors would like to thank Ludvig af Klinteberg at the Department of Business and Mathematics, Mälardalen University (MDU), for helpful discussions. The authors acknowledge the support from the Swedish Research Council under grant 2023-04269.

\appendix

\section{Two proofs}\label{a:proofs}\begin{proof}[Proof of Lemma \ref{lem:mu}]
Let $\varphi_0=\alpha+i\beta$ with $\beta>0$, $\chi=e^{-\beta}$. With the change of variables $\psi=\varphi-\alpha$, we have
\begin{equation}
S_k^p(\varphi_0) = e^{ik\alpha} \int_0^{2\pi} \frac{e^{ik\psi}}{(1+\chi^2-2\chi\cos\psi)^p}\D\psi.
\end{equation}
The imaginary part of the remaining integral vanishes by symmetry. Hence, for $k\geq 0$,
\begin{equation}
S_k^p(\varphi_0) = 2e^{ik\alpha}\omega_k^p(\chi), \qquad \omega_k^p(\chi) = \int_0^\pi\frac{\cos(k\psi)}{(1-2\chi\cos(\psi)+\chi^2)^p}\D\psi .
\label{eq:omega_def}
\end{equation}
We rewrite the denominator of the integrand of $\omega_k^p$ as
\begin{equation}
\chi^2-2\chi\cos(\psi)+1=(1-\chi)^2\left(1+\frac{4\chi}{(1-\chi)^2}\sin^2(\psi/2)\right) = (1-\chi)^2\left(1-\ell(\chi)\sin^2(\psi/2)\right),
\label{eq:omega1}
\end{equation}
where
\begin{equation}
\ell(\chi)=-\frac{4\chi}{(1-\chi)^2}.
\end{equation}
Substituting \eqref{eq:omega1} into $\omega_k^p$, utilizing the periodicity of the integrand and the variable substitution $t=\psi/2$ we find
\begin{equation}
\omega_k^p(\chi) = \frac{1}{(1-\chi)^{2p}}\underbrace{\int_0^\pi\frac{\cos(2kt)}{\left(1-\ell(\chi)\sin^2(t)\right)^{p}}\dt}_{\eqqcolon Q_k^p(\chi)}.
\label{eq:omega2}
\end{equation}
It remains to derive recurrences for $Q_k^p$. We temporarily omit the argument $\chi$ of $\ell$ and $Q_k^p$. For $k\geq 1$,
\begin{equation}
Q_k^p = \underbrace{\int_0^\pi\frac{(1-2\sin^2(t))\cos(2(k-1)t)}{\left(1-\ell\sin^2(t)\right)^p}\dt}_{\eqqcolon R_k^p} - \underbrace{\int_0^\pi\frac{\sin(2t)\sin(2(k-1)t)}{\left(1-\ell\sin^2(t)\right)^p}\dt}_{\eqqcolon T_k^p},
\end{equation}
where we used the identities
\begin{equation}
\cos(2kt) = \cos(2t)\cos(2(k-1)t) - \sin(2t)\sin(2(k-1)t), \qquad \cos(2t) = 1-2\sin^2(t).
\end{equation}
We treat the two integrals $R_k^p$ and $T_k^p$ separately. For the first, adding and subtracting $(2/\ell)Q_{k-1}^p$ gives
\begin{equation}
R_k^p = \frac{2}{\ell}Q_{k-1}^{p-1} + \frac{\ell-2}{\ell}Q_{k-1}^{p}.
\label{eq:Rkp}
\end{equation}
For the second term, integration by parts gives
\begin{equation}
T_k^p = -\frac{2(k-1)}{(p-1)\ell}\int_0^\pi\frac{\cos(2(k-1)t)}{\left(1-\ell\sin^2(t)\right)^{p-1}} = -\frac{2(k-1)}{\ell(p-1)}Q_{k-1}^{p-1},
\label{eq:Tkp}
\end{equation}
where the boundary term vanishes. Combining \eqref{eq:Rkp} and \eqref{eq:Tkp} yields, for $p>1/2$,
\begin{equation}
Q_k^p = \frac{1+\chi^2}{2\chi}Q_{k-1}^p - \frac{(1-\chi)^2}{2\chi}\frac{p+k-2}{p-1}Q_{k-1}^{p-1}.
\label{eq:Qkp}
\end{equation}

For $p=1/2$, the recurrence \eqref{eq:Qkp} would involve $Q_{k-1}^{-1/2}$. To eliminate this term, we now find a relation between $Q_k^{1/2}$ and $Q_k^{-1/2}$. By partial integration and a trigonometric identity it follows that
\begin{equation}
\begin{split}
Q_k^{-1/2} &= \frac{\ell}{4k}\int_0^\pi\frac{\sin(2kt)\sin(2t)}{\left(1-\ell\sin^2(t)\right)^{1/2}}\dt \\
&= \frac{\ell}{8k}\int_0^\pi\frac{\cos(2(k-1)t)}{\left(1-\ell\sin^2(t)\right)^{1/2}}\dt - \frac{\ell}{8k}\int_0^\pi\frac{\cos(2(k+1)t)}{\left(1-\ell\sin^2(t)\right)^{1/2}}\dt \\
&= \frac{\ell}{8k}\left(Q_{k-1}^{1/2}-Q_{k+1}^{1/2}\right).
\end{split}
\label{eq:Qk1}
\end{equation}
Substituting \eqref{eq:Qk1} into \eqref{eq:Qkp} with $p=1/2$ gives
\begin{equation}
Q_k^{1/2} = \frac{1+\chi^2}{\chi}\frac{2(k-1)}{2k-1}Q_{k-1}^{1/2} - \frac{2k-3}{2k-1}Q_{k-2}^{1/2},
\label{eq:Qk1new}
\end{equation}
which, as before, requires $Q_{k-1}^{1/2}$, but now also $Q_{k-2}^{1/2}$. Thus, combining \eqref{eq:Qkp} and \eqref{eq:Qk1new} yields
\begin{equation}
    Q_k^p(\chi) =
    \begin{cases}
        \dfrac{1+\chi^2}{\chi}\dfrac{2(k-1)}{2k-1}Q_{k-1}^{p} - \dfrac{2k-3}{2k-1}Q_{k-2}^{p},\quad & p=1/2, \\
        \dfrac{1+\chi^2}{2\chi}Q_{k-1}^p - \dfrac{(1-\chi)^2}{2\chi}\dfrac{p+k-2}{p-1}Q_{k-1}^{p-1},\quad & p>1/2.
    \end{cases}
    \label{eq:Qkp_rec}
\end{equation}

The initial values of $Q_k^p$ for $p=1/2,3/2,5/2$ are computed in closed form using Mathematica \cite{Mathematica}. 
In each case they contain a common factor $(1-\chi)$. We therefore define $\mu_k^p(\chi)=Q_k^p(\chi)/(1-\chi)$.
Substituting this into \eqref{eq:omega2} gives
\begin{equation}
\omega_k^p(\chi) = \frac{\mu_k^p(\chi)}{(1-\chi)^{2p-1}}.
\label{eq:omega3}
\end{equation}
The recurrence formulas for $\mu_k^p$ are the same as those for $Q_k^p$ in \eqref{eq:Qkp_rec}, and the initial values are those stated in the lemma.

Finally, it follows that $\mu_{-k}^p(\chi)=\mu_k^p(\chi)$. Moreover, since the denominator in $S_k^p(\varphi_0)$ is real-valued for real $\varphi$, $S_{-k}^p(\varphi_0)=\overline{S_k^p(\varphi_0)}$. Together with \eqref{eq:omega_def} and \eqref{eq:omega3}, this proves the stated formula for all $k\in\mathbb{Z}$.
\end{proof}

\begin{proof}[Proof of Lemma \ref{lem:nu_rec}]
Rewrite the integrals $\nu_k^p(\troot)$ as
\begin{equation}
    \nu_k^p(\troot) = \int_{-1}^1 \frac{t^{k-1}}{\left((t-t_r)^2+t_i^2\right)^{p-1/2}}\dt.
    \label{eq:appendix_nukp}
\end{equation}
Consider $p=3/2$. Then,
\begin{equation}
\begin{split}
    \nu_{k+1}^{3/2}(\troot) &= \int_{-1}^1 \frac{t^{k-2}t^2}{(t-t_r)^2+t_i^2}\dt = \int_{-1}^1 \frac{t^{k-2}\left((t-t_r)^2+t_i^2+2t_rt-(t_r^2+t_i^2)\right)}{(t-t_r)^2+t_i^2}\dt \\
    &= \int_{-1}^1t^{k-2}\dt + 2t_r\int_{-1}^1\frac{t^{k-1}}{(t-t_r)^2+t_i^2}\dt - |\troot|^2\int_{-1}^1\frac{t^{k-2}}{(t-t_r)^2+t_i^2}\dt \\
    &= \frac{1-(-1)^{k-1}}{k-1} + 2t_r\nu_k^{3/2}(\troot) - |\troot|^2\nu_{k-1}^{3/2}(\troot).
\end{split}
\end{equation}
By shifting the indices by one, we arrive in the recursion formula in \eqref{eq:nu_rec}. The recurrence formula for $p>3/2$ is found analogously. Note that if $\troot=t_r+it_i$ is purely imaginary, i.e.~$t_r=0$, and $k$ is even, the integrand in \eqref{eq:appendix_nukp} vanishes due to the odd symmetry of the integrand and the symmetric integration interval.

Lastly, the initial values for $\nu_k^p(\trootlambda)$ in \eqref{eq:nu_rec} for $p=3/2,~5/2$, computed using Mathematica \cite{Mathematica}, are
\begin{equation}
    \nu_1^{3/2}(\trootlambda) =
    \begin{cases}
        \dfrac{1}{t_i}\left(\tan^{-1}\left(\dfrac{1-t_r}{t_i}\right)+\tan^{-1}\left(\dfrac{1+t_r}{t_i}\right)\right),& \text{ if } t_i\neq 0, \\
        \dfrac{2}{t_r^2-1}, & \text{ if } t_i=0,
    \end{cases}
    \label{eq:nu_rec_initial_value_1}
\end{equation}
\begin{equation}
    \nu_2^{3/2}(\trootlambda)=
    \begin{cases}
        \dfrac{1}{2}\log\left(1-\dfrac{4t_r}{(1+t_r)^2+t_i^2}\right) + t_r\nu_0^{3/2}(\trootlambda),& \text{ if } t_i\neq 0, \\
        t_r\nu_1^{3/2}(\trootlambda) + \log\left(\dfrac{t_r-1}{t_r+1}\right),& \text{ if } t_i=0,
    \end{cases}
    \label{eq:nu_rec_initial_value_2}
\end{equation}
\begin{equation}
    \nu_1^{5/2}(\trootlambda) = 
    \begin{cases}
    \begin{aligned}
    &\frac{1}{2t_i^3\left((-1+t_r)^2+t_i^2\right)\left((1+t_r)^2+t_i^2\right)}\left(2t_i(1-t_r^2+t_i^2)\right. \\
    &\left. +(1+t_r^4+t_i^4)\tan^{-1}\left(\frac{1-t_r}{t_i}\right) - 2\left(t_i^2+t_r^2(t_i^2-1)\right)\tan^{-1}\left(\frac{t_r-1}{t_i}\right) \right. \\
    &\left. +\left((-1+t_r)^2+t_i^2\right)\left((1+t_r)^2+t_i^2\right)\tan^{-1}\left(\frac{1+t_r}{t_i}\right)\right),
    \end{aligned} & \text{ if } t_i\neq 0, \\
    \dfrac{2(1+3t_r^2)}{3(t_r^2-1)^3},&\text{ if } t_i=0,
    \end{cases}
    \label{eq:nu_rec_initial_value_3}
\end{equation}
\begin{equation}
    \nu_2^{5/2}(\trootlambda) = 
    \begin{cases}
    \dfrac{t_r}{2t_i^3}\left(-\dfrac{2t_i(-1+|\trootlambda|^2)}{t_r^4+2t_r^2(-1+t_i^2)+(1+t_i^2)^2} + \tan^{-1}\left(\dfrac{1-t_r}{t_i}\right)+ \tan^{-1}\left(\dfrac{1+t_r}{t_i}\right)\right),&\text{ if } t_i\neq 0, \\
    \dfrac{8t_r}{3(t_r^2-1)^3},&\text{ if } t_i=0.
    \end{cases}
    \label{eq:nu_rec_initial_value_4}
\end{equation}
\end{proof}

\section{Roots of planar squared-distance functions}\label{a:roots}This appendix derives the analytical formulas used in Section \ref{s:root_finding} for computing the complex root of the squared-distance function $R_\lambda^2$ in spheroidal geometries. The key observation is that the spheroidal problem can be reduced to finding the roots of squared-distance functions for circles and ellipses in the plane.

We first recall the known result for a circle, which was previously derived in \cite{SORGENTONE2023}. We then extend it to ellipses using a Joukowsky transform that maps ellipses to circles.

\begin{lemma}[Root of $R^2$ for circle in the plane \cite{SORGENTONE2023}]\label{lem:root_circle}Let a circle of radius $a>0$ in the $xy$-plane be parameterized by
\begin{equation}
    \ggamma(\alpha)=a\left(\cos(\alpha),\sin(\alpha),0\right),\quad 0\leq \alpha<2\pi.
\end{equation}
Given a point $\xx=(x,y,z)\in\mathbb{R}^3$ with $\rho=\sqrt{x^2+y^2}>0$, not on the circle, define
\begin{equation}
    R^2(\alpha,\xx) = \left(a\cos(\alpha)-x\right)^2 + \left(a\sin(\alpha)-y\right)^2 + z^2.
\end{equation}
Then $R^2(\alpha,\xx)=0$ for $\alpha=\alpha_0$, with
\begin{equation}
    \alpha_0 = \atantwo(y,x) \pm i\ln\left(\frac{1}{\rho}\left(\lambda+\sqrt{\lambda^2-\rho^2}\right)\right),
\end{equation}
where
\begin{equation}
    \lambda = \frac{1}{2a}\left(a^2+x^2+y^2+z^2\right),\quad \lambda>\rho.
\end{equation}
Here $\atantwo(\eta,\xi)$ is the argument of the complex number $\xi+i\eta$, $-\pi<\atantwo(\eta,\xi)\leq\pi$.
\end{lemma}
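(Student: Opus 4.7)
The plan is a direct computation: expand the squared distance, reduce the sum $x\cos\alpha + y\sin\alpha$ to a single cosine by absorbing the real-plane angle $\atantwo(y,x)$, solve the resulting transcendental equation, and then verify the side condition $\lambda>\rho$ using that $\xx$ is not on the circle.

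First I would expand
\begin{equation*}
  R^2(\alpha,\xx) = (a\cos\alpha - x)^2 + (a\sin\alpha - y)^2 + z^2
                 = a^2 - 2a\bigl(x\cos\alpha + y\sin\alpha\bigr) + x^2 + y^2 + z^2.
\end{equation*}
Writing $x = \rho\cos\beta$, $y=\rho\sin\beta$ with $\beta = \atantwo(y,x)$ (which is valid since $\rho>0$), the angle-sum identity gives $x\cos\alpha + y\sin\alpha = \rho\cos(\alpha-\beta)$, so the equation $R^2(\alpha,\xx)=0$ becomes
\begin{equation*}
  \cos(\alpha - \beta) = \frac{a^2 + x^2 + y^2 + z^2}{2a\rho} = \frac{\lambda}{\rho}.
\end{equation*}

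Next I would solve this equation in $\mathbb{C}$. Setting $\alpha - \beta = u + iv$ with $u,v\in\mathbb{R}$, the identity $\cos(u+iv) = \cos u\cosh v - i\sin u\sinh v$ and the fact that $\lambda/\rho$ is real force $\sin u\sinh v = 0$. Because the right-hand side will turn out to exceed $1$ (so $v\neq 0$), this requires $\sin u = 0$, and positivity of $\lambda/\rho$ selects $u=0$ (modulo $2\pi$, which I would just note is the expected periodicity of the parameterization). Then $\cosh v = \lambda/\rho$ yields the two conjugate solutions $v = \pm\acosh(\lambda/\rho)$, and rewriting $\acosh$ in logarithmic form gives exactly
\begin{equation*}
  \alpha_0 = \beta \pm i\ln\!\left(\tfrac{1}{\rho}\bigl(\lambda + \sqrt{\lambda^2-\rho^2}\bigr)\right).
\end{equation*}

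Finally I would verify $\lambda > \rho$, which is what makes the logarithm real (so the two roots are genuinely complex conjugates) and is the one nontrivial piece. A direct algebraic manipulation gives
\begin{equation*}
  \lambda - \rho = \frac{1}{2a}\bigl(a^2 + \rho^2 + z^2 - 2a\rho\bigr) = \frac{1}{2a}\bigl((a-\rho)^2 + z^2\bigr),
\end{equation*}
and since $\xx$ is not on the circle we cannot have both $\rho=a$ and $z=0$, so $(a-\rho)^2 + z^2 > 0$ and thus $\lambda > \rho$. The only subtle point in the whole argument, and the one I would take a little care with, is the selection $u=0$: in principle any even multiple of $\pi$ works, but these give the same point on the circle under the $2\pi$-periodic parameterization, so choosing the principal branch $u=\beta\in(-\pi,\pi]$ produces the unique pair of roots closest to the real parameter interval, as claimed.
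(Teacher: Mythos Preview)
Your proof is correct. The paper itself does not prove this lemma: it is stated with a citation to \cite{SORGENTONE2023} and used as an input to the subsequent results (in particular Lemma~\ref{lem:root_ellipse}), so there is no ``paper's own proof'' to compare against. Your direct computation---reducing to $\cos(\alpha-\beta)=\lambda/\rho$ via the polar representation of $(x,y)$, solving with $\cos(u+iv)=\cos u\cosh v - i\sin u\sinh v$, and verifying $\lambda>\rho$ through the identity $\lambda-\rho=\bigl((a-\rho)^2+z^2\bigr)/(2a)$---is exactly the standard argument one would expect, and all steps check out. The only minor presentational point is that you invoke $v\neq 0$ before establishing $\lambda>\rho$; it would read more cleanly to prove $\lambda>\rho$ first, but the logic is sound either way.
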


\begin{lemma}[Root of $R^2$ for ellipse in the plane]\label{lem:root_ellipse}Let an ellipse with semi-axes $a,b>0$ in the $xy$-plane be parameterized by 
\begin{equation}
    \ggamma(\alpha)=\left(a\cos(\alpha),b\sin(\alpha),0\right),\quad 0\leq \alpha<2\pi.
\end{equation}
Given a point $\xx=(x,y)\in\mathbb{R}^2$ in the plane of the ellipse and not on it, define
\begin{equation}
    R^2(\alpha,\xx) = \left(a\cos(\alpha)-x\right)^2 + \left(b\sin(\alpha)-y\right)^2.
\end{equation}
Then, $R^2(\alpha,\xx)=0$ for $\alpha=\alpha_0$, with
\begin{equation}
    \alpha_0 = \atantwo(\ytilde,\xtilde) \pm i\ln\left(\frac{1}{\rho}\left(\lambda+\sqrt{\lambda^2-\rho^2}\right)\right),
\end{equation}
where
\begin{equation}
\begin{split}
    \atilde = \frac{a+b}{2},\quad c^2 = \frac{a^2-b^2}{4}, \quad w = x+iy, \\
    u = \frac{1}{2}\left(w\pm\sqrt{w^2-4c^2}\right), \quad\xtilde=\Re(u), \quad\ytilde = \Im(u),
\end{split}
\end{equation}
and
\begin{equation}
    \lambda = \frac{1}{2a}\left(\atilde^2+\xtilde^2+\ytilde^2\right).
\end{equation}
Here $\atantwo(\eta,\xi)$ is defined as in Lemma \ref{lem:root_circle}.
\end{lemma}

\begin{proof}
Consider the Joukowsky transform
\begin{equation}
    w = w(u) = u + \frac{c^2}{u},
    \label{eq:joukowsky}
\end{equation}
where $u\in\mathbb{C}\setminus\{0\}$ for some constant $c$. The Joukowsky transform is a conformal mapping if $u\neq0$ and $u\neq\pm1$, which maps a circle from the $u$-plane into a transformed shape (in this case an ellipse), in the $w$-plane. Conversely, its inverse,
\begin{equation}
    u = \frac{1}{2}\left(w\pm\sqrt{w^2-4c^2}\right),
    \label{eq:inverse_joukowsky}
\end{equation}
maps in the other direction.

Consider the circle $\atilde\left(\cos(\alpha),\sin(\alpha),0\right)$ with radius $\atilde>0$ in the $xy$-plane. Using \eqref{eq:joukowsky} it is mapped into the ellipse $\left(a\cos(\alpha),b\sin(\alpha),0\right)$ in the same plane with semi-axes
\begin{equation}
    a = \atilde + \frac{c^2}{\atilde},\quad b = \atilde - \frac{c^2}{\atilde}.
    \label{eq:a_b_lemma}
\end{equation}
Solving these relations yields
\begin{equation}
    \atilde = \frac{a+b}{2},\quad c^2=\frac{a^2-b^2}{4}.
\end{equation}
Thus, using the inverse map \eqref{eq:inverse_joukowsky}, a point $wu=x+iy$ in the ellipse coordinates corresponds to the point $u=\xtilde+i\ytilde$ in the circle coordinates. The situation therefore reduces to the circle case considered in Lemma \ref{lem:root_circle}, applied to the circle radius $\atilde$ and the point $(\xtilde,\ytilde)$. Substituting the resulting expressions yields the stated formula.
\end{proof}

\section{Stabilization of recurrence relations}\label{a:rec_stability}As described in Remark \ref{rem:mu_rec_stability}, the forward recurrences for $\mu_k^p(\chi)$ in \eqref{eq:mu_rec} can become numerically unstable when $\chi$ is small. The desired sequence is the minimal solution and decays approximately like $\chi^k$. The recurrence also admits a complementary solution that grows with $k$. In finite precision, even a small roundoff component in the growing solution can eventually dominate the computed values and obscure the desired solution.

When this instability is detected, we use the stabilization produce of \cite{ARNOLDUS1984}, in which the three-term recurrence is recast as a homogeneous tridiagonal boundary-value problem of size $k_0-1$. The endpoint $k_0$ must be chosen large enough to include all Fourier modes required by the truncated expansion, and sufficiently far out that the minimal solution has decayed to a negligible size.

We use the following asymptotic model for the magnitude of the minimal solution:
\begin{equation}
\mu_k^p(\chi)\approx \Tilde{\mu}_k^p(\chi) \coloneqq C(p)\mu_0^p(\chi)\chi^k, \qquad 0<\chi<1.
\label{eq:mu_est}
\end{equation}
The constants $C(p)$ are chosen empirically for $p=1/2,3/2,5/2$. This model is not used as a rigorous bound, but as a practical rule for choosing the size of the tridiagonal system and for deciding when the forward recurrence is expected to be reliable. The corresponding decay models and roundoff-growth indicators $\Erecest(\chi)$ are listed in Table \ref{tab:decay_rate_rec_err}. In the implementation, the forward recurrence is used only if $\Erecest(\chi)$, evaluated at the largest required mode, is below the requested recurrence tolerance. Otherwise, the tridiagonal stabilization is used.

Figure \ref{fig:mu3_decay_rate_rec_err} compares the model \eqref{eq:mu_est} and the forward recurrence error predictor in Table \ref{tab:decay_rate_rec_err} with accurate values of $\mu_k^{3/2}(\chi)$ and with measured forward-recurrence errors. The agreement is sufficient for their intended roles: choosing the endpoint $k_0$ and deciding when to switch from the forward recurrence to the stabilized tridiagonal solve. The same behavior is observed for $p=1/2$ and $p=5/2$.

\begin{figure}[t]
\centering
\begin{subfigure}[t]{0.43\textwidth}
\includegraphics[width=\textwidth]{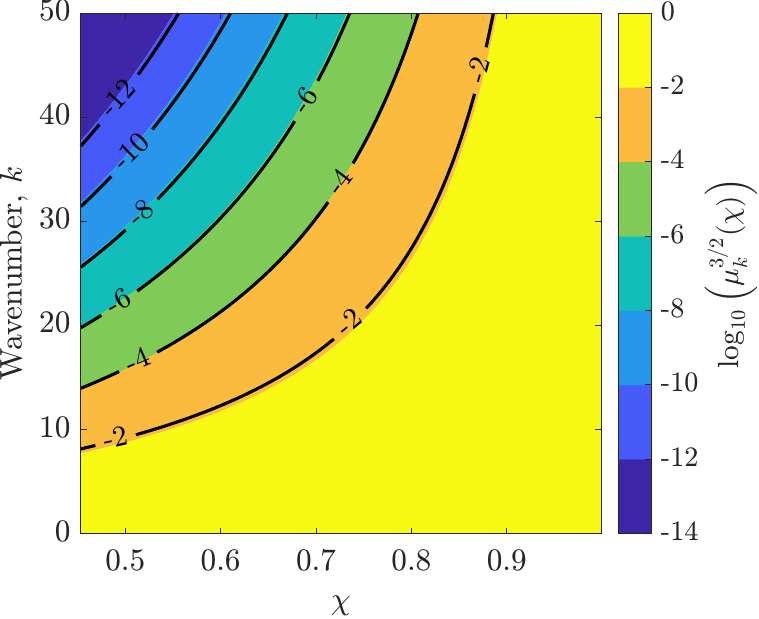}
\caption{}
\label{fig:mu3_decay_rate}
\end{subfigure}
\begin{subfigure}[t]{0.43\textwidth}
\includegraphics[width=\textwidth]{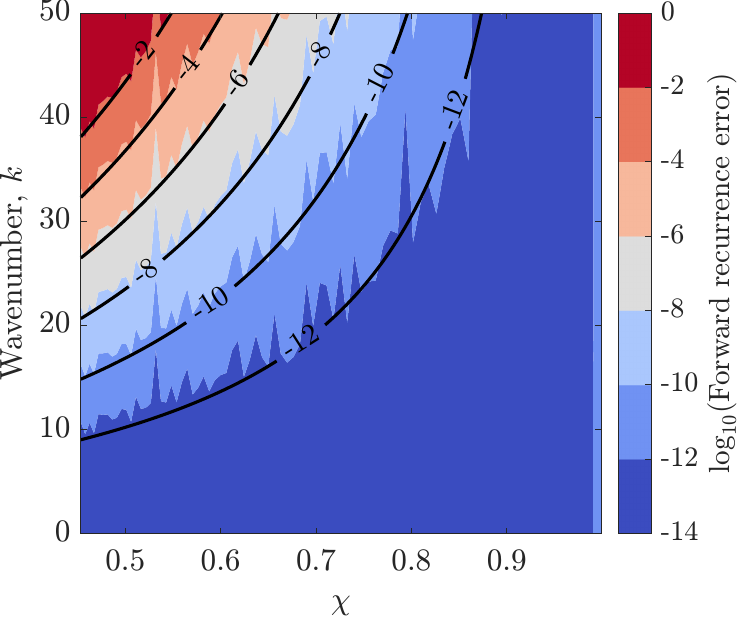}
\caption{}
\label{fig:mu3_rec_err}
\end{subfigure}
\caption{Panel (a) shows the magnitude of $\mu_k^{3/2}(\chi)$, with black contours showing the decay model from Table~\ref{tab:decay_rate_rec_err}. Panel (b) shows the absolute error produced by the forward recurrence for $p=3/2$, with black contours showing the corresponding roundoff-growth indicator $\E_{\textrm{rec}}^{3/2}$. In both panels, contours agree well with the measured quantities.}
\label{fig:mu3_decay_rate_rec_err}
\end{figure}

\begin{table}[t]
\centering
\caption{Decay models $\Tilde{\mu}_k^p(\chi)$ for $\mu_k^p(\chi)$ and corresponding roundoff-growth indicators for the forward recurrence \eqref{eq:mu_rec}, for $p=1/2,3/2,5/2$.}
\renewcommand{\arraystretch}{1.5}
\begin{tabular}{|c|c|c|c|}
\hline
Singularity order
& $p=1/2$
& $p=3/2$
& $p=5/2$
\\ \hline\hline
$\Tilde{\mu}_k^p(\chi)$
& $2^{-3}\mu_0^p(\chi)\chi^k$
& $2^{2}\mu_0^p(\chi)\chi^k$
& $2^{4}\mu_0^p(\chi)\chi^k$
\\ \hline
$\Erecest(\chi)$
& $\dfrac{10^{-16}}{2^{-2}\mu_0^p(\chi)\chi^k}$
& $\dfrac{10^{-14}}{2^{3}\mu_0^p(\chi)\chi^k}$
& $\dfrac{10^{-13}}{2^{6}\mu_0^p(\chi)\chi^k}$
\\ \hline
\end{tabular}
\label{tab:decay_rate_rec_err}
\end{table}

It remains to choose the endpoint $k_0$ of the tridiagonal system. Let $\epsilon_{\textrm{rec}}$ denote the recurrence tolerance, which in the experiments is taken to be the requested accuracy tolerance. From the decay model \eqref{eq:mu_est}, define
\begin{equation}
\kbar = \left\lceil\dfrac{\log\left(\dfrac{\epsilon_{\textrm{rec}}}{C(p)\mu_0^p(\chi)}\right)}{\log(\chi)}\right\rceil.
\end{equation}
Thus, according to the model, the minimal solution has then decayed to approximately $\epsilon_{\textrm{rec}}$ by mode $\kbar$. We then choose
\begin{equation}
k_0 = \max\left\{\left\lceil S\kbar\right\rceil,\,\kmax+1,\,2\right\},
\label{eq:k0}
\end{equation}
where $\kmax$ is the largest Fourier mode required by the truncated expansion \eqref{eq:Ip_approx_ssq}, and $S\geq1$ is a safety factor. In the experiments we use $S=1.5$.

\bibliographystyle{siamplain}
\bibliography{references}

\end{document}